\documentclass[12pt]{amsart}
\usepackage[margin=1in]{geometry}
\usepackage{amsmath}
\usepackage{amsfonts}
\usepackage[scr=boondox, cal=esstix]{mathalpha}
\usepackage{amsthm}
\usepackage{mathrsfs}
\usepackage{amssymb}
\usepackage{tikz}
\usepackage{comment}
\usetikzlibrary{calc}
\usetikzlibrary{arrows.meta}
\usepackage{tikz-cd}
\usepackage{graphicx}
\usepackage{mathtools}
\usepackage{setspace}
\usepackage{blkarray}
\usepackage{stmaryrd}
\usepackage[dvipsnames]{xcolor}
\usepackage{nicematrix}
\usepackage{ytableau}

\usepackage{blkarray}

\usepackage{cite}
\usepackage{hyperref}
\hypersetup{hidelinks}

\usepackage{amscd}

\renewcommand{\span}{\mathsf{span}}
\renewcommand{\d}{\mathbf{d}}

\newcommand{\bs}{\boldsymbol}

\DeclareMathOperator{\rep}{\mathbf{rep}}

\DeclareMathOperator{\Mat}{Mat}

\DeclareMathOperator{\wt}{wt}

\DeclareMathOperator{\rank}{\operatorname{rank}}
\DeclareMathOperator{\Std}{\operatorname{Adm}}
\DeclareMathOperator{\tOmega}{\widetilde{\Omega}}
\DeclareMathOperator{\tabe}{{\sf e}}
\DeclareMathOperator{\tabf}{{\sf f}}
\DeclareMathOperator{\worde}{{\mathcal e}}
\DeclareMathOperator{\wordf}{{\mathcal f}}
\DeclareMathOperator{\mate}{\sf E}
\DeclareMathOperator{\matf}{\sf F}

\DeclareMathOperator{\ins}{\operatorname{ins}}

\newcommand{\C}{\mathbb C}

\newcommand{\quivermatrix}[3]{
\scalebox{0.8}{
\begin{tikzpicture}[baseline=-0.5ex]
  \draw[line width=.2pt, step=.25cm] (0,0) grid (#1,#2);
  #3
\end{tikzpicture}
}
}

\newcommand{\cell}{.28}

\newcommand{\tile}[5]{%
\begin{scope}[shift={(#1,#2)}]
  \draw[line width =.1pt, step=\cell] (0,0) grid ({#3*\cell},{#4*\cell});
  #5
\end{scope}
}

\newcommand{\phantomtile}[5]{%
\begin{scope}[shift={(#1,#2)}]
  \fill[gray!15] (0,0) rectangle ({#3*\cell},{#4*\cell});
  \draw[gray!70, dashed, step=\cell] (0,0) grid ({#3*\cell},{#4*\cell});
  #5
\end{scope}
}
\newcommand{\dotcell}[2]{%
  \fill ({(#1+.5)*\cell},{(2.5-#2)*\cell}) circle (2pt);
}

\newcommand{\numcell}[3]{%
  \node at ({(#1+.5)*\cell},{(2.5-#2)*\cell}) {\scriptsize $#3$};
}

\newtheorem{theorem}{Theorem}[section]
\newtheorem{lemma}[theorem]{Lemma}   
\newtheorem{corollary}[theorem]{Corollary}

\newtheorem{proposition}[theorem]{Proposition}

\newtheorem{claim}[theorem]{Claim}
\newtheorem*{theorem*}{Theorem}

\theoremstyle{definition} 
\newtheorem{definition}[theorem]{Definition}
\newtheorem{example}[theorem]{Example}

\theoremstyle{remark}
\newtheorem{remark}[theorem]{Remark}

\usepackage{shadowtext}
\usepackage{xcolor}

\shadowcolor{gray!50}              

\newcommand{\barindex}[1]{\overline{#1}}

\usepackage{nicematrix}
\usepackage{array}

\begingroup
\catcode`\&=13
\gdef\pampmatrix{%
  \begingroup
  \let&=\amsamp
  \begin{pmatrix}%
}
\gdef\endpampmatrix{\end{pmatrix}\endgroup}
\endgroup

\usepackage{nicematrix}

\begin{document}
\pagestyle{plain}

\title{A combinatorial rule for\\ $GL$-multiplicities of $A_n$-quiver loci}
  
\author{Ian Cavey, Andrew Hardt, and Alexander Yong}

\address{Department of Mathematics, U.~Illinois Urbana-Champaign, Urbana, IL 61801, USA}
\email{cavey@illinois.edu, ahardt@illinois.edu, ayong@illinois.edu}
\date{August 6, 2026}

\begin{abstract}
We give the first positive combinatorial rule for the multiplicities of irreducible $GL$-representations
in the coordinate rings of type $A$ quiver orbit closures, valid for every orientation of the quiver.
Previously, for the special case of varieties of complexes, work of De Concini--Strickland in the early 1980s gave an implicit description of these multiplicities. The combinatorial objects in our rule carry a crystal structure whose highest-weight elements compute these multiplicities.
\end{abstract}

\maketitle
\section{Introduction}

\subsection{Overview}

Quiver representations are collections of linear maps arranged along the edges of a directed graph. 
For quivers of type $A$, their orbit closures are algebraic varieties defined by systems of rank conditions, and may be viewed as generalizations of classical degeneracy loci. This places them in a tradition going back to Giambelli \cite{FultonPragacz}, Thom--Porteous \cite{Porteous}, and Kempf--Laksov \cite{KempfLaksov}, and continuing through Fulton \cite{Fulton-duke}, Buch--Fulton \cite{BuchFulton}, and others.

For type $A$ quivers, the equations of these orbit closures are understood through work of Zelevinsky \cite{Zelevinsky} and Kinser--Rajchgot \cite{KinserRajchgot}. 
Formulas for their Hilbert series were also known;
see Knutson--Miller--Shimozono \cite{KMS}, Buch \cite{BuchJAMS}, Miller \cite{MillerDuke}, 
Kinser--Knutson--Rajchgot \cite{KKR}, and the references therein. 
Missing was a positive combinatorial rule for the multiplicities in the irreducible decomposition of their coordinate
rings under the base change group.

We give  such a rule uniformly for all type $A$ quiver orbit closures and every orientation of the quiver.
Our rule arises from a positive combinatorial model for the character of the coordinate ring, equipped with a crystal structure whose highest-weight elements compute these multiplicities.
It may be seen as a generalized Littlewood--Richardson rule.

\subsection{Main result}\label{sec:mainresult}

An $A_n$ \emph{quiver} $Q$ is a directed path, and a \emph{quiver representation} assigns a finite-dimensional complex vector space to each vertex and a linear map to each arrow. Fixing a dimension vector $\mathbf d=(d_1,d_2,\ldots,d_n)$, the space 
$\rep_Q(\d)$ of all such representations is an affine space carrying a natural action of the \emph{base change group} 
$GL=\prod_{i=1}^n GL_{d_i}$. The orbits  $\Omega^{\circ}$ of this action correspond to isomorphism classes of representations, 
and their closures $\Omega$ are the \emph{quiver loci}. The coordinate ring $\C[\Omega]$ is a $GL$-representation. We determine, for each dominant weight ${\bf \Lambda}$,  the multiplicity of the irreducible $GL$-module $V_{\bf \Lambda}$ in $\C[\Omega]$.

After choosing bases, $\Omega^{\circ}$ contains a representation whose arrow maps $L_i$ are transposed
partial permutation matrices, of dimension $d_i\times d_{i+1}$ if  the arrow is $i\rightarrow i+1$, and
$d_{i+1}\times d_{i}$ otherwise. A tuple of such partial permutation matrices determines $\Omega^\circ$, and hence $\Omega$.
For example, 
\[{\mathbb C}^3 
\xrightarrow{
\quivermatrix{.75}{.75}{
  \fill (.125,.625) circle (2pt);
  \fill (.375,.375) circle (2pt);
}}
 {\mathbb C}^3 \xrightarrow{
 \quivermatrix{.75}{.75}{
  \fill (.375,.375) circle (2pt);
}
 } {\mathbb C}^3
\xleftarrow{
\quivermatrix{.75}{.5}{
  \fill (.125,.375) circle (2pt);
}
} {\mathbb C}^2.\]

Using an idea of \cite{KinserRajchgot}, replace $Q$ with a  bipartite quiver $\widetilde Q$ by iterating the local rules:

\[{\mathbb C}^{d_{i-1}} \xrightarrow{L_{i-1}} {\mathbb C}^{d_{i}}
 \xrightarrow{L_i}
 {\mathbb C}^{d_{i+1}} \leadsto
 {\mathbb C}^{d_{i-1}} \xrightarrow{L_{i-1}} {\mathbb C}^{d_{i}}
 {\color{red}{\xleftarrow{I_{d_i}}} {\mathbb C}^{d_i}} \xrightarrow{L_{i}} 
 {\mathbb C}^{d_{i+1}}, 
 \]
 \[{\mathbb C}^{d_{i-1}} \xleftarrow{L_{i-1}} {\mathbb C}^{d_{i}}
 \xleftarrow{L_i}
 {\mathbb C}^{d_{i+1}} \leadsto
 {\mathbb C}^{d_{i-1}} \xleftarrow{L_{i-1}} {\mathbb C}^{d_{i}}
 {\color{red}{\xrightarrow{I_{d_i}}} {\mathbb C}^{d_i}} \xleftarrow{L_{i}} 
 {\mathbb C}^{d_{i+1}}, 
 \]
where $I_{d_i}$ is the $d_i\times d_i$ identity matrix. Draw $\widetilde Q$ in a zig-zag 
so  each $\leftarrow$ now points down. Place each $L_i$ or $I_{d_i}$ 
below or to the left of its associated arrow in the zig-zag. In our example, $\widetilde Q$ and the associated
\emph{$Q$-shape} are shown below:
\[
{\mathbb C}^3 
\xrightarrow{
\quivermatrix{.75}{.75}{
  \fill (.125,.625) circle (2pt);
  \fill (.375,.375) circle (2pt);
}}
{\mathbb C}^3 {\color{red}{\xleftarrow{ 
\quivermatrix{.75}{.75}{
 \fill (.125,.625) circle (2pt);
  \fill (.375,.375) circle (2pt);
  \fill (.625,.125) circle (2pt);
 }}}}
 {\mathbb C}^3 \xrightarrow{
 \quivermatrix{.75}{.75}{
  \fill (.375,.375) circle (2pt);
}
 } {\mathbb C}^3
\xleftarrow{
\quivermatrix{.75}{.5}{
  \fill (.125,.375) circle (2pt);
}
} {\mathbb C}^2
\iff
\vcenter{\hbox{
\begin{tikzpicture}[scale=1.3, baseline=-.5ex]

\tile{0}{0}{3}{3}{
  \dotcell{0}{0}
  \dotcell{1}{1}
}

{\color{red}{\phantomtile{0}{.84}{3}{3}{
  \dotcell{0}{0}
  \dotcell{1}{1}
  \dotcell{2}{2}
}}}

\tile{.84}{.84}{3}{3}{
  \dotcell{1}{1}
}

\tile{.84}{1.68}{3}{2}{
  \dotcell{0}{1}
}

\draw[purple, -{Stealth[length=2mm,width=1.5mm]}, thick] (0,.84) -- (.84,.84);
\draw[purple, -{Stealth[length=2mm,width=1.5mm]}, thick] (.84,1.68) -- (.84,.84);
\draw[purple, -{Stealth[length=2mm,width=1.5mm]}, thick] (.84,1.68) -- (1.68,1.68);
\draw[purple, -{Stealth[length=2mm,width=1.5mm]}, thick] (1.68,2.27) -- (1.68,1.68);

\node at (-.2,.82) {\scriptsize ${\mathbb C}^3$};
\node at (1.05,.68) {\scriptsize ${\mathbb C}^3$};
\node at (0.68,1.85) {\scriptsize {\color{red}{${\mathbb C}^3$}}};
\node at (1.92,1.72) {\scriptsize ${\mathbb C}^3$};
\node at (1.92,2.25) {\scriptsize ${\mathbb C}^2$};
\end{tikzpicture}}}
\]
Label the regions ${\mathcal R}_1,{\mathcal R}_2,\ldots, {\mathcal R}_{m}$ from southwest to northeast, where ${\mathcal R}_i$
is filled by the matrix between vertices $i,i+1$ of ${\widetilde Q}$.
For $1\leq i\leq j\leq m$,  $r_{[i,j]}$ is the rank of this filling of
${\mathcal R}_{[i,j]}={\mathcal R}_i \cup {\mathcal R}_{i+1} \cup \cdots \cup {\mathcal R}_j$. For instance, in the example, $r_{[1]}=2, r_{[1,3]}=4, r_{[3,4]}=2$.
A region filled by a newly introduced identity matrix (depicted in red) is a
\emph{phantom region}.

Our rule counts certain fillings ${\bf M}$ of the $Q$-shape by nonnegative integers. An \emph{antidiagonal} of ${\bf M}$ consists of positions  $(i_1,j_1),\dots,(i_k,j_k)$ with $i_1<\cdots<i_k$ and $j_1>\cdots>j_k$ 
such that each position either contains a positive entry of ${\bf M}$ or lies in a 
phantom region. It is \emph{phantom} if it uses at least one position in a phantom region. Define
${\bf M}$ to be $\Omega$-\emph{admissible} if, for every $1\leq i\leq j\leq m$, the portion in ${\mathcal R}_{[i,j]}$ contains no (phantom) 
antidiagonal of length $r_{[i,j]}+1$, and if it is identically zero on every phantom region. Let $\Std(\Omega)$ be the set of 
such ${\bf M}=(M_1,\ldots,M_{n-1})$ with phantom regions omitted. E.g., 
\[{\bf M}_1=\vcenter{\hbox{\begin{tikzpicture}[scale=1.3, baseline=-.5ex]

\tile{0}{0}{3}{3}{
  \numcell{2}{0}{2}
  \numcell{1}{1}{1}
}

{\phantomtile{0}{.84}{3}{3}{
}}

\tile{.84}{.84}{3}{3}{
  \numcell{0}{1}{1}
  \numcell{1}{0}{2}
}

\tile{.84}{1.68}{3}{2}{
  \numcell{2}{1}{1}
}

\draw[purple, -{Stealth[length=2mm,width=1.5mm]}, thick] (0,.84) -- (.84,.84);
\draw[purple, -{Stealth[length=2mm,width=1.5mm]}, thick] (.84,1.68) -- (.84,.84);
\draw[purple, -{Stealth[length=2mm,width=1.5mm]}, thick] (.84,1.68) -- (1.68,1.68);
\draw[purple, -{Stealth[length=2mm,width=1.5mm]}, thick] (1.68,2.27) -- (1.68,1.68);
\end{tikzpicture}}}
\qquad
{\bf M}_2=\vcenter{\hbox{\begin{tikzpicture}[scale=1.3, baseline=-.5ex]

\tile{0}{0}{3}{3}{
  \numcell{1}{0}{2}
  \numcell{0}{1}{1}
}

{\phantomtile{0}{.84}{3}{3}{
}}

\tile{.84}{.84}{3}{3}{
  \numcell{0}{1}{1}
  \numcell{2}{0}{2}
}

\tile{.84}{1.68}{3}{2}{
  \numcell{0}{1}{1}
}

\draw[purple, -{Stealth[length=2mm,width=1.5mm]}, thick] (0,.84) -- (.84,.84);
\draw[purple, -{Stealth[length=2mm,width=1.5mm]}, thick] (.84,1.68) -- (.84,.84);
\draw[purple, -{Stealth[length=2mm,width=1.5mm]}, thick] (.84,1.68) -- (1.68,1.68);
\draw[purple, -{Stealth[length=2mm,width=1.5mm]}, thick] (1.68,2.27) -- (1.68,1.68);
\end{tikzpicture}}}
\qquad
{\bf M}_3=\vcenter{\hbox{\begin{tikzpicture}[scale=1.3, baseline=-.5ex]

\tile{0}{0}{3}{3}{
  \numcell{2}{0}{2}
  \numcell{1}{1}{1}
  \numcell{0}{1}{2}
}

{\phantomtile{0}{.84}{3}{3}{
}}

\tile{.84}{.84}{3}{3}{
  \numcell{0}{0}{1}
}

\tile{.84}{1.68}{3}{2}{
  \numcell{0}{1}{1}
}

\draw[purple, -{Stealth[length=2mm,width=1.5mm]}, thick] (0,.84) -- (.84,.84);
\draw[purple, -{Stealth[length=2mm,width=1.5mm]}, thick] (.84,1.68) -- (.84,.84);
\draw[purple, -{Stealth[length=2mm,width=1.5mm]}, thick] (.84,1.68) -- (1.68,1.68);
\draw[purple, -{Stealth[length=2mm,width=1.5mm]}, thick] (1.68,2.27) -- (1.68,1.68);
\end{tikzpicture}}}
\]
Above, ${\bf M}_1\not\in \Std(\Omega)$ as it has an antidiagonal in ${\mathcal R}_{[3,4]}$ of length
$3>r_{[3,4]}=2$. Also, ${\bf M}_2\not\in \Std(\Omega)$  as it contains a phantom antidiagonal
in ${\mathcal R}_{[1,3]}$ of length $5>r_{[1,3]}=4$. Finally ${\bf M}_3\in \Std(\Omega)$  (e.g. the entries in ${\mathcal R}_1$
do not form an antidiagonal of length $3$).

The \emph{row word} ${\operatorname{row}}(M)$ of a nonnegative integer matrix $M$ is obtained by reading columns top down, left to right, recording $M_{rc}$ many $r$'s. Its \emph{column word} ${\operatorname{col}}(M)$
reads the rows in English order and records $M_{rc}$ many $c$'s. If $M$ has $d$ columns, 
its \emph{reverse complement word} ${\operatorname{col}}^*(M)$ is obtained by reversing ${\operatorname{col}}(M)$ and replacing each letter $c\in [d]$ by the decreasing word
 $d(d-1)\cdots(c+1)(c-1)\cdots 1$. For instance, if $M= 
 \vcenter{\hbox{\begin{tikzpicture}[scale=0.8, baseline=-.2ex]
\tile{0}{0}{3}{3}{
  \numcell{2}{0}{2}
  \numcell{1}{1}{1}
  \numcell{0}{1}{2}
}
\end{tikzpicture}}}$,
 $\operatorname{row}(M)=22211$,
 $\operatorname{col}(M)=33112$ and $\operatorname{col}^*(M)=31\, 32 \, 32\, 21\, 21$.
 
If $M$ is associated to edge $e$ of $Q$, its \emph{$i$-reading word} for  an incident vertex $i$,
denoted $\operatorname{read}_i(M)$, is $\operatorname{row}(M)$ if $i$ is a tail of $e$ and 
${\operatorname{col}}^*(M)$ otherwise. 
The \emph{reading list} $\operatorname{read}_Q({\bf M})$ is the sequence
$\operatorname{read}_Q({\bf M})=
\left(\operatorname{read}_i(M_{i-1})\operatorname{read}_i(M_i)\right)_{i=1}^n$,
where $M_0$ and $M_n$ are declared empty.

Apply Robinson-Schensted-Knuth (RSK) insertion to the $n$ words in 
$\operatorname{read}_Q({\bf M})$. If the resulting tableaux $(T_1,\ldots,T_n)$ are all 
\emph{supersemistandard} (only $j$'s appear in row $j$), ${\bf M}$ is \emph{highest-weight}.
Let $\lambda^{(i)}$ be the shape of $T_i$ and $\Delta_i=$ the sum of the entries of matrices where $i$ is the head of the associated edge; let $\operatorname{profile}({\bf M})=([\lambda^{(i)},\Delta_i])_{i=1}^n$.

Let $V_{[\lambda^{(i)},\Delta_i]}(d_i)$ be the $GL_{d_i}$-representation 
$V_{\lambda^{(i)}}(d_i)\otimes
\det^{-\Delta_i}$, where $V_{\lambda^{(i)}}(d_i)$ is the $GL_{d_i}$-\emph{Schur module} 
of highest weight $\lambda^{(i)}$, and 
 $\det$ denotes the determinant representation of $GL_{d_i}$. Each $V_{[\lambda^{(i)},\Delta_i]}(d_i)$ is an irreducible
$GL_{d_i}$-rational representation of highest weight $\lambda^{(i)}-\Delta_i\cdot(1,1,\ldots,1)$.
Accordingly, when indexing representations, we identify $[\lambda^{(i)},\Delta_i]$ with the $GL_{d_i}$-weight
$\lambda^{(i)}-\Delta_i\cdot(1,1,\ldots,1)$ and regard $\operatorname{profile}({\bf M})$ as the resulting $GL$-weight $\bf\Lambda$. 
The representation $V_{\bf\Lambda}$ is the exterior tensor product of these $V_{[\lambda^{(i)},\Delta_i]}(d_i)$; every irreducible rational $GL$-representation is of this form.

Our main theorem gives the irreducible $GL$-decomposition of the coordinate ring of $\Omega$:

\begin{theorem}\label{thm:newrule}
In the irreducible $GL$-decomposition of $\mathbb{C}[\Omega]$,
\[
{\mathbb C}[\Omega]\cong_{GL} \bigoplus_{\Lambda} V_{{\bf \Lambda}}^{\oplus m_{\Omega, \bf \Lambda}},
\]
we have $m_{\Omega,{\bf \Lambda}}=\#\{{\bf M}\in\Std(\Omega): {\bf M}\text{ is highest-weight},\ 
\operatorname{profile}({\bf M})={\bf \Lambda}\}$.
\end{theorem}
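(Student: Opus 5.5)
The plan has two parts. First show that $\Std(\Omega)$ indexes a basis of $\C[\Omega]$ with the right torus weights, so it computes the $GL$-character. Then put a crystal structure on $\Std(\Omega)$ whose highest-weight elements, in the sense of the statement, pick out the irreducible constituents.

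\emph{Step 1: reduce to the bipartite quiver.} Following Kinser--Rajchgot \cite{KinserRajchgot}, $\Omega$ is isomorphic to an open subset, or slice, of a bipartite quiver locus $\widetilde\Omega$ for $\widetilde Q$. Setting the inserted maps equal to $I_{d_i}$ realizes $\C[\Omega]$ as a quotient, and I would track the extra $GL_{d_i}$ factors attached to phantom vertices. Transported to $\C[\Omega]$, the $\det^{-\Delta_i}$ twists at head vertices are the natural bookkeeping for this slice. I would also use Zelevinsky's description \cite{Zelevinsky}: $\Omega$ is cut out by the minors of size $r_{[i,j]}+1$ in the block matrices for the regions $\mathcal R_{[i,j]}$.

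\emph{Step 2: a standard monomial / Gr\"obner basis.} I would choose a term order under which the initial terms of these minors are antidiagonal products, in the style of Knutson--Miller--Shimozono \cite{KMS}. For the phantom identity blocks, the minors that pass through them restrict to antidiagonals that use phantom positions. The goal is to prove that the standard monomials are exactly the fillings with no long (phantom) antidiagonal in each $\mathcal R_{[i,j]}$ and zero on phantom regions, that is, $\Std(\Omega)$.

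The key check is a dimension count. I would compare the resulting Hilbert series with the known multigraded Hilbert series, via the quiver polynomial formulas of \cite{KMS,BuchJAMS}, or more directly by a degeneration argument. Equality of the two series shows that the minors form a Gr\"obner basis. The conclusion of this step is
\[\operatorname{char}\C[\Omega]=\sum_{{\bf M}\in\Std(\Omega)} x^{\wt({\bf M})}.\]
I expect this step to be the main obstacle, specifically the Gr\"obner property for general orientation with phantom antidiagonals. My first attempt would be induction on the number of arrows, using the Kinser--Rajchgot reduction to bipartite type together with a known result for bipartite quivers.

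\emph{Step 3: the crystal structure.} Next I would check that, under the identification in Step 2, the $GL_{d_i}$-weight of ${\bf M}$ equals the content of $\operatorname{read}_i$ shifted by $-\Delta_i$. The shift is needed because ${\operatorname{col}}^*$ records complements, and this encodes the dual representation as $\bigwedge^{d-1}\otimes\det^{-1}$.

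I would then define crystal operators $\tabe_k,\tabf_k$ at vertex $i$ by applying the usual bracket rule to the word $\operatorname{read}_i(M_{i-1})\operatorname{read}_i(M_i)$. A letter change there corresponds to moving a single entry of $M_{i-1}$ or $M_i$ between adjacent rows or columns; for ${\operatorname{col}}^*$ it is complementary letter movement inside a block. Two things must be proved. Operators at different vertices commute, because they act on different words. And $\Std(\Omega)$ is closed under these operators, because a bracket-rule move cannot create an antidiagonal longer than $r_{[i,j]}$; I would argue this by a local analysis of $2\times 2$ patterns, Knuth-style.

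Once this holds, $\Std(\Omega)$ is a disjoint union of products of $GL_{d_i}$ normal crystals. RSK identifies each component via $(T_1,\ldots,T_n)$, and its highest-weight element is the one whose $T_i$ are all supersemistandard. Counting components by highest weight $\operatorname{profile}({\bf M})$, together with the character identity from Step 2, gives the multiplicities $m_{\Omega,{\bf\Lambda}}$.

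The closure in Step 3 is the second most delicate point. If the local analysis fails, I would fall back on proving the character identity only, showing that the weight generating function of $\Std(\Omega)$ is Weyl-symmetric by a Bender--Knuth-type involution, and then extracting multiplicities by highest-weight counting.
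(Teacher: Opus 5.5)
Your two-part architecture (a positive character formula, then a crystal on $\Std(\Omega)$) matches the paper's. However, Step~2 has a genuine gap.

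You propose to Gr\"obner-degenerate the ideal of $\Omega$ itself, with phantom blocks specialized to $I_{d_i}$, and to certify this by matching Hilbert series with the formulas of Knutson--Miller--Shimozono, Buch, or Kinser--Knutson--Rajchgot. Those formulas are alternating or quotient expressions. Proving that they equal the positive sum $\sum_{{\bf M}\in\Std(\Omega)}\operatorname{wt}_{T(\d)}({\bf M})$ is exactly the content of the character formula. So ``equality of the two series'' is assumed rather than derived, and ``induction on the number of arrows'' supplies no mechanism.

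Your initial-term claim also fails as stated. In the $Q$-shape each identity block lies on a northwest--southeast diagonal, so an antidiagonal meets its nonzero entries at most once. After substituting $I_{d_i}$, the antidiagonal term of any minor whose antidiagonal uses an off-diagonal phantom position vanishes identically. Admissibility routinely counts such antidiagonals, for instance ones using two positions of a single phantom block. The leading term of such a substituted minor comes from a different permutation, and you give no argument identifying the resulting initial ideal with the one cut out by long phantom antidiagonals.

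The missing idea is to never degenerate $\C[\Omega]$ at all.
\begin{itemize}
\item Degenerate the bipartite lift $\C[\widetilde\Omega]$ instead. Kinser--Rajchgot identify it with a Kazhdan--Lusztig variety, so Woo--Yong's Gr\"obner theorem applies directly.
\item Keep the fine multigrading by individual positions and extract the coefficient of $\prod_{\text{phantom}}{\sf z}^{(i)}_{cr}$. These are the standard monomials with every phantom exponent exactly $1$, which is why phantom positions count as occupied in the definition of $\Std(\Omega)$.
\item Woo--Yong's pipe-dream formula for the $K$-polynomial shows that bad pipe dreams contribute nothing to this coefficient. What remains is Kinser--Knutson--Rajchgot's good-pipe-dream expression, which specializes to $\operatorname{char}_{T(\d)}\C[\Omega]$.
\end{itemize}
The extraction must come before the specialization to torus weights.

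Step~3 also has gaps.
\begin{itemize}
\item Operators at adjacent vertices do not ``act on different words.'' The matrix $M_j$ is read by rows at one endpoint of its edge and by columns at the other. So a move at vertex $j$ that alters $M_j$ also alters the word at vertex $j+1$, though only within its Knuth class by the bicrystal property.
\item Consequently ${\bf M}\mapsto(T_1,\ldots,T_n)$ intertwines the operators only on Knuth classes. Commutation then needs van Leeuwen's commutation of row and column bicrystal operators, together with the Knuth invariance of unpaired-bracket counts.
\item Identifying each component with $\mathcal B_{\bf\Lambda}$ also needs an injectivity argument. The paper reorders paths vertex by vertex and applies single-vertex bicrystal injectivity.
\item Closure of $\Std(\Omega)$ under the operators is not a local $2\times 2$ check. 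One must control how many boxes of a phantom square a long antidiagonal can pick up, and this depends on bracket pairings across the barred and unbarred segments.
\item The Bender--Knuth fallback does not help. Weyl symmetry of a generating function gives neither the character identity nor a highest-weight count without a crystal.
\end{itemize}
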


For example, one sees ${\bf M}=\left(\vcenter{\hbox{\!\!\!\!
\quivermatrix{.75}{.75}{
   \node at (.625,.625) {\scriptsize $2$};
  \node at (.375,.375) {\scriptsize $1$};
}\!\!, \!\!\!\!
\quivermatrix{.75}{.75}{
  \node at (.625,.625) {\scriptsize $1$};
} \!\!\!,\!\!\!\!
\quivermatrix{.75}{.5}{
  \node at (.625,.375) {\scriptsize $1$};
}}}\!\!\!
\right)\in \Std(\Omega)$ from its $Q$-shape:
\[
{\mathbb C}^3 
\xrightarrow{
\quivermatrix{.75}{.75}{
   \node at (.625,.625) {\scriptsize $2$};
  \node at (.375,.375) {\scriptsize $1$};
}}
 {\mathbb C}^3 \xrightarrow{
 \quivermatrix{.75}{.75}{
  \node at (.625,.625) {\scriptsize $1$};
}
 } {\mathbb C}^3
\xleftarrow{
\quivermatrix{.75}{.5}{
  \node at (.625,.375) {\scriptsize $1$};
}
} {\mathbb C}^2
\iff
\vcenter{\hbox{
\begin{tikzpicture}[scale=1.2, baseline=-.5ex]
\tile{0}{0}{3}{3}{
  \numcell{2}{0}{2}
  \numcell{1}{1}{1}
}

{\phantomtile{0}{.84}{3}{3}{
}}

\tile{.84}{.84}{3}{3}{
  \numcell{2}{0}{1}
}

\tile{.84}{1.68}{3}{2}{
  \numcell{2}{1}{1}
}

\draw[purple, -{Stealth[length=2mm,width=1.5mm]}, thick] (0,.84) -- (.84,.84);
\draw[purple, -{Stealth[length=2mm,width=1.5mm]}, thick] (.84,1.68) -- (.84,.84);
\draw[purple, -{Stealth[length=2mm,width=1.5mm]}, thick] (.84,1.68) -- (1.68,1.68);
\draw[purple, -{Stealth[length=2mm,width=1.5mm]}, thick] (1.68,2.27) -- (1.68,1.68);
\end{tikzpicture}}}.
\]
Now, $\operatorname{read}_Q({\bf M})= (211, 312121\,1, 21\, 21,1)$. 
RSK insertion gives $\left(\vcenter{\hbox{\ytableausetup{boxsize=.8em} \ytableaushort{11, 2},
\ytableaushort{1111,22,3}, \ytableaushort{11,22},\ytableaushort{1}}}\right)$
and so ${\bf M}$ is highest-weight. Since
$\ytableausetup{boxsize=.5em}\operatorname{profile}({\bf M})=
\left(\left[\vcenter{\hbox{$\ydiagram{2,1}$}},0\right], \left[\vcenter{\hbox{$\ydiagram{4,2,1}$}}, 3\right], 
\left[\vcenter{\hbox{$\ydiagram{2,2}$}}, 2\right], \left[\ydiagram{1},0\right] \right)$,
${\bf M}$ witnesses in ${\mathbb C}[\Omega]$ the appearance of
\[V_{\bf \Lambda}=V_{\ydiagram{2,1}}(3)\boxtimes \left(
V_{\ydiagram{4,2,1}}(3)\otimes \det\ \!\!\!^{-3}
\right)
\boxtimes \left(V_{\ydiagram{2,2}}(3) \otimes \det\ \!\!\!^{-2}\right)\boxtimes V_{\ydiagram{1}}(2).\]
No other highest-weight matrix in $\Std(\Omega)$ has this profile, and hence $m_{\Omega,{\bf \Lambda}}=1$. 
More generally, the existence of \emph{any} ${\bf M}\in \Std(\Omega)$ with $\operatorname{profile}({\bf M})={\bf \Lambda}$
establishes positive multiplicity (Corollary~\ref{cor:nonzeroness}).

A classical family is given by the
\emph{Buchsbaum--Eisenbud varieties of complexes}  
introduced in \cite{Buchsbaum.Eisenbud}  and studied by Kempf \cite{Kempf}
in the 1970s and later in the 1980s by De Concini–Strickland \cite{Strickland}. Consider a sequence of linear maps such that the composition of successive maps is zero:
\[V_1 \xrightarrow{L_1} V_2 \xrightarrow{L_2} \cdots \xrightarrow{L_{n-1}} V_n \text{\ such that } L_{i+1}L_i={\bf 0}, 1\leq i\leq n-2.\]  
We use Theorem~\ref{thm:newrule} to give a simple formula for this multiplicity-free decomposition (Corollary~\ref{thm:complexesapp}). 
Earlier, the decomposition problem and multiplicity-freeness were treated implicitly in a remark in~\cite[p.~70]{Strickland}.

Another example is the $A_3$ quiver representation space $\rep_{Q}({\bf d})$ of
${\mathbb C}^p\rightarrow {\mathbb C}^q \leftarrow
{\mathbb C}^r$. Here, Theorem~\ref{thm:newrule} computes $(GL_{r+p},GL_r\times GL_p)$-branching 
multiplicities. These multiplicities are well-known
to be \emph{Littlewood-Richardson coefficients}. In this sense, Theorem~\ref{thm:newrule} is a generalized Littlewood-Richardson
rule. The connection to Littlewood-Richardson combinatorics goes deeper, and we plan to pursue it in a sequel.

\subsection{Proof methods and organization of the paper}
The first ingredient is the set of $\Omega$-admissible matrices introduced above.
The second, developed in Section~\ref{sec:quivercrystalgraph} and forming the
technical core of this paper, is a crystal structure on this set.
This yields a crystal-theoretic reformulation of Theorem~\ref{thm:newrule} (Theorem~\ref{thm:mainprecise}), in which the multiplicities are counted by highest-weight $\Omega$-admissible matrices.
This requires developing, in Section 3, a separated-word crystal theory compatible with Stembridge--Stroomer \emph{rational insertion} 
\cite{Stembridge, Stroomer}, building on Kwon's crystal model for \emph{rational semistandard tableaux} \cite[\S~5.3, Lemma~5.8]{Kwon}, \cite[\S~3.4]{Kwon2}. 

Earlier work of Kinser--Rajchgot \cite{KinserRajchgot} reduced many
problems about quiver loci in type $A_n$ to the bipartite case; this
geometric input is reflected in the $Q$-shape and is used throughout
our proofs. In the bipartite case, quiver loci are realized as
Kazhdan--Lusztig varieties, and the Gr\"obner basis theorem of Woo and
the third author \cite{WooYong}  degenerates their
determinantal rank conditions to a monomial ideal generated by
antidiagonal products.
Kinser--Knutson--Rajchgot
\cite{KKR} combine these results to obtain a quotient formula for the
multigraded Hilbert series, i.e., the character of $\mathbb C[\Omega]$
for the maximal torus of $GL$.

Our third ingredient is to retain the fine multigrading of this
Gr\"obner degeneration, remembering the individual positions of
the $Q$-shape. Extracting the contribution in which every phantom
position occurs exactly once, and \emph{only then} specializing to torus
weights, yields the first monomial-positive formula for this character
in terms of admissible matrices (Theorem~\ref{thm:poschar}).
Consequently, as explained in Section~\ref{sec:proof-cont}, the
admissible matrices indexing this character decompose into connected
components of the crystal, from which Theorem~\ref{thm:newrule}
follows.

Thus the same set of $\Omega$-admissible matrices both indexes the positive character formula and carries
the crystal structure that converts it into the irreducible decomposition.

Section~\ref{sec:S1} contains preliminaries about quiver representations, tableaux, and crystal graphs.
Section~\ref{sec:apptocomplex} applies our results to give a simple formula for varieties of complexes.

\section{Preliminaries}\label{sec:S1}

\subsection{Quivers and representations}
We need basic notions about quiver representations found
in textbooks such as \cite{Derksenbook}. 

A \emph{quiver} $Q$ is a directed graph with vertex set $Q_0$ and arrow set $Q_1$, together with two maps
$t,h : Q_1 \to Q_0$ 
sending an arrow to its tail and head, respectively.
A \emph{representation} ${\sf V}$ of $Q$ assigns a finite-dimensional vector space $V_x$ to each $x\in Q_0$
and a linear map 
$f_e:V_{t(e)}\to V_{h(e)}, e\in Q_1$. 
The \emph{dimension vector} of ${\sf V}$ is 
$\mathbf d:Q_0\to {\mathbb Z}_{\ge 0},  \, \mathbf d(x)=\dim V_x$.
Let $\operatorname{Mat}_{m,n}$ be the space of $m\times n$ complex matrices $M$.
The space of representations with dimension vector $\mathbf d$ is the affine space
\[\rep_Q(\mathbf d) \;=\; \prod_{e\in Q_1} \mathrm{Mat}_{\mathbf d(h(e)),\,\mathbf d(t(e))}.\]
(When forming the $Q$-shape in the introduction we transpose these matrices.)

The \emph{base change group} is
$GL(\mathbf d) \;=\; \prod_{x\in Q_0} GL_{\mathbf d(x)}({\mathbb C})$,
acting on $\rep_Q(\mathbf d)$ by
\begin{equation}\label{eqn:bcgaction}
(g_x)_{x\in Q_0}\cdot (A_e)_{e\in Q_1} \;=\; \bigl(g_{h(e)}A_e g_{t(e)}^{-1}\bigr)_{e\in Q_1}.
\end{equation}

The \emph{coordinate ring} of the affine space $\rep_Q(\d)$ is the polynomial ring $\C[\rep_Q(\d)]$ generated by the coordinate functions of the matrices, which we regard as the ring of polynomial functions $\rep_Q(\d)\to \C$. The base change group acts on $\C[\rep_Q(d)]$ by pullback of functions.

\subsection{Orbits and loci; basic examples}

A \emph{quiver orbit} $\Omega^\circ$ is a $GL(\mathbf d)$-orbit in $\rep_Q(\mathbf d)$.
Its Zariski closure 
$\Omega=\overline{\Omega^\circ}\subset \rep_Q(\mathbf d)$
is a \emph{quiver locus}.
For $A_n$ quivers, $\rep_Q(\mathbf d)$ has finitely many $GL(\mathbf d)$-orbits. 
The action of the base change group $GL(\d)$ on $\C[\rep_Q(\d)]$ induces the contragredient action on $\C[\Omega]$.
By Gabriel's Theorem, $\Omega^{\circ}$ has a representation whose maps are partial permutation matrices \cite{KKR}. 

\begin{example}[Determinantal varieties]
The space $\operatorname {Mat}_{m,n}=\rep_Q(\d)$ 
where $Q$ is the $A_2$ quiver ${\mathbb C}^m{\longrightarrow}
{\mathbb C}^n$ and ${\bf d}=(m,n)$. 
It has an action of 
${GL}(\d)=GL_m\times GL_n$ by row and column operations. 
The orbits $\Omega^{\circ}_k$ 
consist of matrices of a fixed rank $k$ for $0\leq k\leq \min(m,n)$; they are the 
$GL(\d)$-orbit of the partial identity matrix with $k$ many $1$'s.
Their
closures, the \emph{classical determinantal varieties}, 
${\Omega}_k$ consist of matrices of rank $\leq k$. 
\end{example}

\begin{example}[Varieties of complexes]\label{exa:BE-details}
As a basic example of the varieties of complexes studied in \cite{Strickland}, consider
\[X=\{(A,B)\in {\sf Mat}_{2,2}^2\mid BA=0\}.\]
$X\subset \rep_Q(\d)$ where $Q$ is the $A_3$ quiver 
${\mathbb C}^2{\longrightarrow} {\mathbb C}^2 {\longrightarrow} {\mathbb C}^2$,
and ${\bf d}=(2,2,2)$. 
Now, ${GL}(\d)=GL_2\times GL_2\times
GL_2$ acts on $X$ since 
\[(g_1,g_2,g_3)\cdot(A,B)=(g_2 A g_1^{-1},
g_3 B g_2^{-1}) \implies (g_3 B g_2^{-1})(g_2A g_1^{-1})=0.\] 
This action preserves
the ranks of $A$ and $B$, and $X$ has three irreducible components. The component $\Omega$ where
we impose $\rank(A),\rank(B)\leq 1$, is the closure of the $GL$-orbit of 
$A=\left[\begin{smallmatrix}
0 & 0 \\
0 & 1
\end{smallmatrix}\right], B=\left[\begin{smallmatrix} 1 & 0 \\ 0 & 0 \end{smallmatrix}\right]$. 
\end{example}

\begin{example}[Bipartite quiver]
Let 
\[\Omega^{\circ}=\{(A,B)\in {\sf Mat}_{2,2}^2 \mid \operatorname{rank}(A)={\operatorname{rank}}(B)=\dim(\operatorname{im}(A)+\operatorname{im}(B))=1\}.\]
$\Omega^{\circ}$ is a ${GL}(\d)=GL_2\times GL_2\times GL_2$ orbit in the $A_3$ quiver
${\mathbb C}^2\rightarrow {\mathbb C}^2 \leftarrow
{\mathbb C}^2$. Here the ${GL}(\d)$-action is by
\[(g_1,g_2,g_3)\cdot(A,B)=(g_2 A g_1^{-1}, g_2 B g_{3}^{-1}).\]
Now, $\Omega^{\circ}$ is the $GL(\d)$-orbit of the pair $\left(\left[\begin{smallmatrix} 1 & 0 \\ 0 & 0 \end{smallmatrix}\right],
\left[\begin{smallmatrix} 1 & 0 \\ 0 & 0 \end{smallmatrix}\right]\right)\in \Omega^{\circ}$. Taking the closure, 
\[\Omega=\{(A,B)\in {\sf Mat}_{2,2}^2 \mid \operatorname{rank}(A), {\operatorname{rank}}(B), \dim(\operatorname{im}(A)+\operatorname{im}(B))\leq 1\}.\]
The work of \cite{KinserRajchgot,KKR}, as explained in Section~\ref{subsec:theredux}, reduces the study of the 
character of loci for arbitrary $A_n$ quivers
to the bipartite case. 
\end{example}

\subsection{RSK insertion} \label{sec:RSKinsertion}
Ordinary words are inserted by RSK row or column insertion producing semistandard Young tableaux; a reference is \cite{Fulton-book}. 

\begin{definition}[Row insertion] Let 
$\bs{a} = a_1\le a_2\le\cdots\le a_k$ 
be a sequence of integers, and let $b\in {\mathbb N}$. The \emph{row insertion} 
$\bs{a}\leftarrow b$
of $b$ into $\bs{a}$ is the pair $(\bs{a}',b')$ where $\bs{a'}$ is $\bs{a}$ with $a_j$ replaced with $b$ for the smallest value of $j$ with $b<a_j$, and $b'=a_j$. If $a_k\le b$, $\bs{a}'$ is the sequence $a_1\le \cdots \le a_k\le b$, and $b' = \varnothing$. Also, define $\bs{a}\leftarrow\varnothing = (\bs{a},\varnothing)$ and $\varnothing\leftarrow b = (b,\varnothing)$.
\end{definition}

\begin{definition}[Column insertion] Let 
$\bs{a} = a_1< a_2<\cdots< a_k$ 
be a sequence of integers, and let $b\in {\mathbb N}$. The \emph{column insertion} 
$b\rightarrow \bs{a}$
of $b$ into $\bs{a}$ is the pair $(\bs{a}',b')$ where $\bs{a'}$ is $\bs{a}$ with $a_j$ replaced with $b$ for the smallest value of $j$ with $b\leq a_j$, and $b'=a_j$. If $a_k< b$, $\bs{a}'$ is the sequence $a_1< \cdots < a_k< b$, and $b' = \varnothing$. Also, define $\varnothing \rightarrow \bs{a}= (\bs{a},\varnothing)$ and $b\rightarrow \varnothing = (b,\varnothing)$.
\end{definition}

\begin{definition} \label{def:rowiterate}
If $T$ is semistandard, the $i$th row of $T$ is an increasing sequence $\bs{a}_{(i)}$. The \emph{row insertion} $T \leftarrow b$ is the tableau with rows $\bs{a}_{(i)}'$, where $b_0 = b$ and $\bs{a}_{(i)}'$ is recursively defined as $(\bs{a}_{(i)}',b_i) = \bs{a}_{(i)}\leftarrow b_{i-1}$. Similarly, 
one defines the \emph{column insertion} $b\rightarrow T$.
\end{definition}

\begin{example}
\ytableausetup{boxsize=.8em}
 Let $T=\vcenter{\hbox{\begin{ytableau}
 1 & 1 & 2 & 2 & 3\\
 2& 3 & 3 \\
 4 & 4 & 5
 \end{ytableau}}}$ and $b=1$. Then $T\leftarrow b$ is computed by the following sequence where the entry to be bumped is marked:
\[T=\vcenter{\hbox{\begin{ytableau}
 1 & 1 & {\bf 2} & 2 & 3\\
 2& 3 & 3 \\
 4 & 4 & 5
 \end{ytableau}}} \mapsto \vcenter{\hbox{\begin{ytableau}
 1 & 1 & 1 & 2 & 3\\
 2& {\bf 3} & 3 \\
 4 & 4 & 5
 \end{ytableau}}}\mapsto
 \vcenter{\hbox{\begin{ytableau}
 1 & 1 & 1 & 2 & 3\\
 2& 2 & 3 \\
 {\bf 4} & 4 & 5
 \end{ytableau}}}\mapsto
 \vcenter{\hbox{\begin{ytableau}
 1 & 1 & 1 & 2 & 3\\
 2& 2 & 3 \\
 3 & 4 & 5 \\ 
 4
 \end{ytableau}}} = T\leftarrow 1.
 \]
If instead we compute $b\rightarrow T$, the result is: 
\[T=\vcenter{\hbox{\begin{ytableau}
 {\bf 1} & 1 & 2 & 2 & 3\\
 2& 3 & 3 \\
 4 & 4 & 5
 \end{ytableau}}} \mapsto \vcenter{\hbox{\begin{ytableau}
 1 & {\bf 1} & 2 & 2 & 3\\
 2& 3 & 3 \\
 4 & 4 & 5
 \end{ytableau}}}\mapsto
 \vcenter{\hbox{\begin{ytableau}
 1 & 1 & {\bf 2} & 2 & 3\\
 2& 3 & 3 \\
  4 & 4 & 5
 \end{ytableau}}}\mapsto
 \vcenter{\hbox{\begin{ytableau}
 1 & 1 & 1 & {\bf 2} & 3\\
 2& 3 & 3 \\
 4 & 4 & 5 
 \end{ytableau}}}
 \mapsto
 \vcenter{\hbox{\begin{ytableau}
 1 & 1 & 1 & 2 & {\bf 3}\\
 2& 3 & 3 \\
 4 & 4 & 5 
 \end{ytableau}}}
 \mapsto
 \vcenter{\hbox{\begin{ytableau}
 1 & 1 & 1 & 2 & 2 & 3\\
 2& 3 & 3 \\
 4 & 4 & 5 
 \end{ytableau}}}
 = 1\!\rightarrow\!T.
 \]

\end{example}

\begin{definition}\label{def:insis}
Let $\bs{w} = (w_1,\ldots,w_k)\in {\mathbb N}^k$. The \emph{row insertion tableau} of $\bs w$ is
\[\ins_r(\bs w)=
((\cdots(\varnothing \leftarrow w_1) \leftarrow w_2) \cdots )\leftarrow w_k\]
and the \emph{column insertion tableau} of $\bs w$ is
\[\ins_c(\bs w)=
w_1 \rightarrow (\cdots (w_{k-1} \rightarrow (w_k \rightarrow \varnothing))).\]
\end{definition}

In fact, $\ins_r(\bs{w}) = \ins_c(\bs{w})$ for any $\bs{w}\in {\mathbb N}^k$~\cite[\S A.2]{Fulton-book}. However, this will no longer be true in the generalization to rational tableaux, which we come to in the next subsection.

\subsection{Stembridge's rational tableaux} \label{sec:stembridge-insertion}
 
A \emph{rational tableau}  \cite{Stembridge}
is a pair of semistandard tableaux $(U,V)$, satisfying the \emph{Stembridge constraint}, i.e., 
for each $i$, the total number of entries in $[1,i]$
appearing in the first columns of $U$ and $V$ is at most $i$. Denote this rational tableau by $U\div V$. 
Let $\mu,\nu$ be the respective partition shapes of $U,V$; let $\mu\div\nu$ be the \emph{rational shape} of $U\div V$.

Suppose $U$ and $V$ have entries from $[d]=\{1,2,\ldots,d\}$.
The \emph{shift operator} is
\[\operatorname{shift}(U \div V) = U' \div V',\] 
where $V'$ is $V$ with the leftmost column $c$ removed and $U'$ is the 
tableau obtained from $U$ by adding a column to the left consisting of the complement in $[d]$ of the entries of $c$.
The Stembridge constraint on $U\div V$ guarantees that the new leftmost column of $U'$ is compatible with semistandardness. 
For $k\gg 0$, $\operatorname{shift}^k(U \div V)$ is a semistandard tableau of a straight shape.\footnote{See \cite[Proof of Proposition~2.4]{Stembridge}. Combinatorially, the shift operation corresponds to 
tensoring with the determinant representation.}

For rational tableaux, Stembridge \cite{Stembridge} defines row insertion by
\[
(U\div \varnothing)\leftarrow i := (U\leftarrow i)\div \varnothing
\qquad\text{ and }\qquad
(\varnothing\div V)\leftarrow\barindex{i} := \varnothing\div (V\leftarrow i).
\]
and in general by reduction to ordinary row insertion as follows:
\begin{equation}\label{eqn:May29ddd}
(U\div V) \leftarrow i:= \operatorname{shift}^{-k}(\operatorname{shift}^k(U\div V)\leftarrow i), \qquad k\gg 0,
\end{equation}
\begin{equation}\label{eqn:May29eee}
(U\div V) \leftarrow \barindex{i}:= \operatorname{shift}^k(\operatorname{shift}^{-k}(U\div V)\leftarrow \barindex{i}), \qquad k\gg 0,
\end{equation}
and these definitions are independent of $k$ for large enough $k$ (see Theorem~\ref{thm:Steminsprops} below). 

If ${\bs w}=(w_1,\ldots,w_k)$ is a word consisting of barred and unbarred letters, define
\[
\ins_r({\bs w})
=
(\cdots((\emptyset\leftarrow w_1)\leftarrow w_2)\cdots)\leftarrow w_k,
\]
 using
 \eqref{eqn:May29ddd} for unbarred letters and \eqref{eqn:May29eee} for barred letters. 
 
Similarly, for rational tableaux, Stroomer \cite{Stroomer} defines column insertion. Let
\[
i\rightarrow(U\div \varnothing) := (i\rightarrow U)\div \varnothing
\qquad\text{ and }\qquad
\barindex{i}\rightarrow(\varnothing\div V) := \varnothing\div (i\rightarrow V).
\]
In general, the definition is by reduction to ordinary column insertion:
\begin{equation}\label{eqn:May26aaa}
i\rightarrow(U\div V) := \operatorname{shift}^{-k}(i\rightarrow \operatorname{shift}^k(U\div V)), \qquad k\gg 0,
\end{equation}
\begin{equation}\label{eqn:May26bbb}
\barindex{i}\rightarrow(U\div V) := \operatorname{shift}^k(\barindex{i}\rightarrow \operatorname{shift}^{-k}(U\div V)), \qquad k\gg 0.
\end{equation}

\begin{example}
Suppose $d=4$ and $U\div V=\begin{ytableau} 1 \end{ytableau}
\div
\begin{ytableau} 2 &3 \end{ytableau}$. To compute $2\rightarrow U\div V$ we first shift twice to obtain a 
straight-shape semistandard tableau: 
\[\operatorname{shift}^2(U\div V)=\vcenter{\hbox{\begin{ytableau} 1 & 1 & 1 \\ 2 & 3 \\ 4 & 4\end{ytableau}}} \implies
2\rightarrow  \operatorname{shift}^2(U\div V) =\vcenter{\hbox{\begin{ytableau} 1 & 1 & 1 \\ 2 & 2 & 3 \\ 4 & 4\end{ytableau}}}.\]
Shifting back we conclude
$2\rightarrow U\div V = \vcenter{\hbox{\begin{ytableau} 1 \\ 3 \end{ytableau}}} \div \vcenter{\hbox{\begin{ytableau} 3 &3 \end{ytableau}}}$.
\end{example}
If $\bs{w} = (w_1,\ldots,w_k)$ is a word consisting of unbarred and barred entries, define 
\[\ins_c(\bs w)=
w_1 \rightarrow (\cdots (w_{k-1} \rightarrow (w_k \rightarrow \varnothing)))\]
 using
 \eqref{eqn:May26aaa} for unbarred letters and \eqref{eqn:May26bbb} for barred letters. 

Both row and column rational insertion have descriptions not invoking shifting. As we will not make use of these
other descriptions here, we refer the reader to  \cite[\S~3]{Stembridge} and \cite[\S~4]{Stroomer}.

\subsection{Crystals}
Our reference for seminormal crystals is \cite[Chapter~2]{Bump.Schilling}.

\begin{definition} \label{def:crystal}
Fix a weight lattice $\Lambda$ and let $\Phi$ be the corresponding root system with index set $I$ and simple roots $\Delta = \{\alpha_i | i\in I\}$. A \emph{(seminormal) crystal} of type $\Lambda$ is a nonempty set $\mathcal{B}$ along with maps
\[
e_i,f_i: \mathcal{B}\to \mathcal{B}\sqcup \{\o\},\;\; (i\in I) \qquad \text{and}\qquad 
\wt: \mathcal{B}\to\Lambda \qquad \text{such that:}
\]
\begin{enumerate}
\item[(a)] If $x,y\in\mathcal{B}$, then $e_i(x)=y$ if and only if $f_i(y) = x$, and in this case $\wt(y) = \wt(x) + \alpha_i$.
\item[(b)] For all $x\in\mathcal{B}$, the quantities
\[
\phi_i(x) := \max\{ k\in\mathbb{Z}_{\ge 0} \;|\; f_i^k(x)\ne \o\} \quad \epsilon_i(x) := \max\{ k\in\mathbb{Z}_{\ge 0} \;|\; e_i^k(x)\ne \o\}
\]
are finite and satisfy $\phi_i(x) - \epsilon_i(x) = \langle \wt(x), \alpha_i^\vee\rangle$.
\end{enumerate}
\end{definition}

In Definition~\ref{def:crystal},
$\wt$ is the \emph{weight map} for $\mathcal{B}$, and $\phi_i(x)$ and $\epsilon_i(x)$ are the \emph{root string lengths}. The $e_i$ are \emph{raising operators} and the $f_i$ are \emph{lowering operators}; together, they are the \emph{crystal operators} for $\mathcal{B}$.
In this paper, we work with crystals for products of general linear groups. In particular if $G=GL_d$ the lattice is ${\mathbb Z}^d$ with $A_{d-1}$ root system.

The \emph{crystal graph} is the labelled directed graph consisting of a vertex for each element of $\mathcal{B}$ and an edge
\tikz[baseline=(A.base)] {
    \node (A) {$x$};
    \node (B) at (1.2,0) {$y$};
    \draw[-{Stealth[scale=1.2]}] (A) -- node[above] {\scriptsize $t_i$} (B);
}
whenever $t_i(x) = y$ for $t_i\in\{e_i,f_i\}$. $\mathcal{B}$ is \emph{connected} if its graph is connected.

Let $\mathcal{B}'$ be another crystal associated to $\Lambda$ with crystal operators $\{e_i',f_i' \;|\; i\in I\}$ and weight map $\wt':\mathcal{B}'\to\Lambda$. 
\begin{definition}
A \emph{crystal isomorphism} is a bijection $\psi: \mathcal{B}'\to\mathcal{B}$ such that
\begin{equation} \label{eq:crystal-morphism}
e_i\circ\psi = \psi \circ e_i', \qquad\qquad f_i\circ\psi = \psi \circ f_i', \qquad\qquad \wt' = \wt\circ\psi.
\end{equation}
\end{definition}
As usual, for any map $\psi:\mathcal B'\to\mathcal B$, we extend $\psi$ to a map
$\mathcal B'\sqcup\{\o\}\to\mathcal B\sqcup\{\o\}$ by setting $\psi(\o)=\o$.
Automatic from \eqref{eq:crystal-morphism} is that crystal isomorphisms preserve root string lengths: 
\begin{equation}\label{eqn:July29aaa}
\phi_i(\psi(x))=\phi_i'(x), \qquad \epsilon_i(\psi(x))=\epsilon_i'(x).
\end{equation}

\begin{definition}
A bijection $\psi:\mathcal{B}'\to\mathcal{B}$ is a \emph{shifted crystal isomorphism} if 
\begin{equation} \label{eq:shifted-crystal-morphism}
e_i\circ\psi = \psi \circ e_i', \qquad\qquad f_i\circ\psi = \psi \circ f_i', \qquad\qquad \wt' = \wt\circ\;\psi - H.
\end{equation}
for some fixed $H\in\span(\Lambda)$ such that $\langle H, \alpha_i^\vee\rangle = 0$ for all $i\in I$. $H$ is the \emph{shift} of $\psi$. 
\end{definition}
For $H\neq \vec 0$, a shifted crystal
isomorphism only exists if the rank of $\Lambda$ exceeds $\Phi$'s rank.

Now suppose ${\mathcal B}$ is a crystal but
${\mathcal B}'$ is not known to satisfy (a) nor (b) of Definition~\ref{def:crystal}. The following proposition allows us to simultaneously prove ${\mathcal{B}}'$ is a crystal and
that $\mathcal{B}'\cong\mathcal{B}$; we use it repeatedly in this paper. 

\begin{proposition} \label{prop:inj-implies-bij}
Let $\mathcal{B}$ be a connected crystal with root system $\Phi$, index set $I$, weight lattice $\Lambda$, crystal operators $\{e_i,f_i \;|\; i\in I\}$, and weight map $\wt:\mathcal{B}\to \Lambda$.
Let $\mathcal{B}'$ be a nonempty set along with maps 
\[
e_i',f_i': \mathcal{B}'\to \mathcal{B}'\sqcup \{\o\},\;\; (i\in I) \qquad \text{and}\qquad 
\wt': \mathcal{B}'\to\Lambda.
\]
If $\psi:\mathcal{B}'\to\mathcal{B}$ is an injective map satisfying~\eqref{eq:shifted-crystal-morphism},
then ${\mathcal B}'$ is a crystal and $\psi:{\mathcal B}'\to {\mathcal B}$ is a shifted crystal 
isomorphism with shift $H$.
\end{proposition}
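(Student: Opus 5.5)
Since $\mathcal B$ is connected, I would pull its structure back along $\psi$ and show that the image $\psi(\mathcal B')$ is all of $\mathcal B$. Injectivity is what makes the pulled-back structure well defined.

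\emph{Step 1: the image is closed under the crystal operators.} Let $y=\psi(x)$ with $x\in\mathcal B'$. Suppose $e_i(y)\neq\o$. By \eqref{eq:shifted-crystal-morphism}, $e_i(y)=\psi(e_i'(x))$, and since $\psi(\o)=\o$ this forces $e_i'(x)\neq\o$. Hence $e_i(y)\in\psi(\mathcal B')$. The same argument applies to $f_i$. So $\psi(\mathcal B')$ is a nonempty subset of $\mathcal B$ closed under every $e_i$ and $f_i$. Every edge of the crystal graph is of the form $t_i(x)=y$ with $t_i\in\{e_i,f_i\}$, and by axiom (a) each edge can be traversed in either direction using $e_i$ or $f_i$. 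Since $\mathcal B$ is connected, $\psi(\mathcal B')=\mathcal B$, and therefore $\psi$ is a bijection.

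\emph{Step 2: transport the axioms.} Because $\psi$ is a bijection intertwining the operators, we have $e_i'=\psi^{-1}\circ e_i\circ\psi$ and $f_i'=\psi^{-1}\circ f_i\circ\psi$, with $\psi^{-1}(\o)=\o$. We also have $\wt'=\wt\circ\psi-H$.

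For axiom (a), suppose $e_i'(x)=y$. Then $e_i(\psi x)=\psi y$, so by (a) in $\mathcal B$ we get $f_i(\psi y)=\psi x$, that is, $f_i'(y)=x$. The converse is symmetric. For the weights, $\wt'(y)=\wt(\psi y)-H=\wt(\psi x)+\alpha_i-H=\wt'(x)+\alpha_i$.

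For axiom (b), iterating the intertwining relation gives $(f_i')^k(x)\neq\o$ if and only if $f_i^k(\psi x)\neq\o$. Hence $\phi_i'(x)=\phi_i(\psi x)$, and likewise $\epsilon_i'(x)=\epsilon_i(\psi x)$, so both are finite. Then
\[\phi_i'(x)-\epsilon_i'(x)=\langle \wt(\psi x),\alpha_i^\vee\rangle=\langle \wt'(x)+H,\alpha_i^\vee\rangle=\langle \wt'(x),\alpha_i^\vee\rangle,\]
using the hypothesis $\langle H,\alpha_i^\vee\rangle=0$.

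Thus $\mathcal B'$ is a crystal and $\psi$ is a shifted crystal isomorphism with shift $H$.

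The only point needing care is Step 1: we must use the hypothesis that $\psi$ sends $\o$ to $\o$ and no element of $\mathcal B'$ to $\o$, so that $e_i(\psi x)\neq\o$ really forces $e_i'(x)\neq\o$. The rest is formal.
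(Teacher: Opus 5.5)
Your proposal is correct and follows the paper's proof. The paper proves surjectivity by contradiction (an edge leaving $\operatorname{im}(\psi)$ would contradict the intertwining relation), which is the contrapositive of your closure-under-operators argument. It then verifies axioms (a) and (b) exactly as you do. Your explicit use of axiom (a) in $\mathcal B$, to traverse each edge of the crystal graph in either direction, fills in a step the paper leaves implicit.
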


\begin{proof}
First, we claim that $\psi$ is surjective (and therefore bijective). 
If $\psi$ is not surjective, then $\text{im}(\psi)$ is a nonempty proper subset of $\mathcal B$. 
Since $\mathcal B$ is connected, there is an edge between a vertex $b=\psi(c)\in\operatorname{im}(\psi)$ and a vertex not in $\text{im}(\psi)$. 
Thus, for some $i\in I$ and $u_i\in\{e_i,f_i\}$, we have
$u_i(b)\notin \text{im}(\psi)$ and $u_i(b)\neq \o$. 
Let $u_i'$ denote the corresponding operator on ${\mathcal B}'$. 
If $u_i'(c)\neq \o$, then the intertwining relation in \eqref{eq:shifted-crystal-morphism} gives
\[
u_i(b)=u_i(\psi(c))=\psi(u_i'(c))\in \operatorname{im}(\psi),
\]
a contradiction. If $u_i'(c)=\o$, the same computation shows $u_i(b)=\o$, again a contradiction.

Next we show that $\mathcal{B'}$ satisfies (a) and (b) from Definition~\ref{def:crystal}
and is therefore a crystal. For (a), suppose that $e_i'(x)=y\in\mathcal{B}'$. Then,
\[
\psi(y)=\psi(e_i'(x))=e_i(\psi(x)).
\]
Applying $f_i$ and using that $\mathcal B$ is a crystal,
$f_i(\psi(y))=\psi(x)$.
By \eqref{eq:shifted-crystal-morphism},
\[
\psi(f_i'(y))=f_i(\psi(y))=\psi(x).
\]
Since $\psi$ is injective, $f_i'(y)=x$.
Similarly, if $f_i'(y)=x$, then $e_i'(x)=y$.
By~\eqref{eq:shifted-crystal-morphism}, if $e_i'(x) = y$, 
\[
\wt'(y)+H = \wt(\psi(y)) = \wt(\psi(e_i'(x))) = \wt(e_i(\psi(x))) = \wt(\psi(x)) + \alpha_i = \wt'(x) + \alpha_i+H.
\]
Hence (a) holds. 

For (b), since $\psi$ is bijective and we assume \eqref{eq:shifted-crystal-morphism}, the same
rationale for \eqref{eqn:July29aaa} holds here. In particular, 
$\phi_i'(x), \epsilon_i'(x)<\infty$. Hence,
\[
\phi_i'(x)-\epsilon_i'(x)
=\phi_i(\psi(x))-\epsilon_i(\psi(x))  
=\langle \wt(\psi(x)),\alpha_i^\vee\rangle 
=\langle \wt(\psi(x))-H,\alpha_i^\vee\rangle 
=\langle \wt'(x),\alpha_i^\vee\rangle,
\]
where the penultimate equality is because $\langle H, \alpha_i^\vee\rangle = 0$. Thus (b) is satisfied.

This shows that $\mathcal{B}'$ is a crystal, and since $\psi$ is a bijective map between crystals satisfying~\eqref{eq:shifted-crystal-morphism}, it is a shifted crystal isomorphism.
\end{proof}

\section{Separated-word crystals and rational insertion}\label{sec:thetechproofs}

\subsection{Crystal operators on rational tableaux} \label{sec:crystal-ops-rational-tableaux}

For any rational shape $\gamma$ and positive integer $d$, let $\mathcal{B}_\gamma := \mathcal{B}^{(d)}_\gamma$ be the set of rational tableaux of shape $\gamma = \lambda\div \mu$ with entries in $[d]=\{1,2,\ldots,d\}$. Given 
a rational tableau $T=U\div V$, the \emph{weight} of $T$,
denoted $\operatorname{wt}(T)$, is the vector in ${\mathbb Z}^d$ with $\operatorname{wt}(T)_i$ equal to
the number of $i$'s in $U$ minus the number of $i$'s in $V$.

If $\mu=\varnothing$, then $\gamma = \lambda$ is a straight-shape and ${\mathcal B}_{\lambda}$
is the crystal of the polynomial  $\operatorname{GL}_d$-representation with highest weight~$\lambda$; see, e.g., \cite[Chapters~2,3]{Bump.Schilling}. Here $I=[d-1]$, $\Phi$ is the $A_{d-1}$-root system with $\Delta=\{\varepsilon_i-\varepsilon_{i+1} | i\in I\}$, and $\Lambda$ is the $GL_d$ weight lattice ${\mathbb Z}^d$. 
The crystal operators $\tabe_i,\tabf_i$ on tableaux are defined as follows.
Construct the \emph{row word} of $U$ by reading $U$ along rows, from left to right, and bottom to top. Then assign
\[i \mapsto \texttt{)}, \qquad
i+1 \mapsto \texttt{(}.\]
An $i+1$ is replaced
by $i$ if it corresponds to the leftmost unpaired {\tt (} in the row word. If no such unpaired {\tt (} exists, $\o$ is returned. Similarly, the lowering operator $\tabf_i$ ($1\leq i < d$) replaces the $i$ associated to the rightmost unpaired {\tt )} (if it exists) by $i+1$
 or else returns $\o$. 

More generally, for any rational shape $\gamma$, $\mathcal{B}_\gamma$ admits a crystal structure 
associated to the rational $\operatorname{GL}_d$-representation with highest weight $\gamma$; see Kwon~\cite[\S~5.3, Lemma~5.8]{Kwon} and
\cite[\S~3.4]{Kwon2}. The shift map (\S~\ref{sec:stembridge-insertion}) is a bijection between $\mathcal{B}_\gamma$ and $\mathcal{B}_{\operatorname{shift}(\gamma)}$, where
\[\operatorname{shift}(\gamma) = (\lambda + 1^{d-\ell(\mu)})\div (\mu - 1^{\ell(\mu)}).\]
One extends the crystal operators to rational tableaux by
\begin{equation}
\label{eqn:Apr30ccc}
\tabe_i(U\div V) := \operatorname{shift}^{-k}(\tabe_i(\operatorname{shift}^k(U \div V))), \qquad k\gg 0,
\end{equation}
and similarly for $\tabf_i$. This definition is independent of the choice of
sufficiently large $k$ since 
for straight shape tableaux the crystal operators commute with the shift map:
\begin{lemma}\label{lemma:June18zzz}
For straight-shape tableaux,
\[\tabe_i \circ \operatorname{shift}=\operatorname{shift}\circ\tabe_i,
\qquad \tabf_i \circ \operatorname{shift} = \operatorname{shift}\circ\tabf_i.\]
\end{lemma}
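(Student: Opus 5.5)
For a straight-shape tableau $U$ with entries in $[d]$, $\operatorname{shift}(U\div\varnothing)$ just adds a new first column $(1,2,\ldots,d)$ on the left of $U$. Call the result $U'$. The plan is to compare the bracket sequences for the letter pair $(i,i+1)$ in the row words of $U$ and $U'$. We will check that the unpaired brackets are literally the same letters in the same cells, so $\tabe_i$ and $\tabf_i$ act on the same box of $U$. For $\tabe_i$ or $\tabf_i$ of $U$ equal to $\o$, the same will follow for $U'$.

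\textbf{Step 1: locate the new letters.} The row word reads rows left to right, bottom to top. The new column contributes its entry $r$ at the very start of the segment of row $r$. Only rows $i$ and $i+1$ receive a relevant letter, namely an $i$ at the start of row $i$ and an $i+1$ at the start of row $i+1$. Row $i+1$ is read before row $i$. So the row word of $U'$ is the row word of $U$ with a \texttt{(} inserted at the beginning of the row-$(i+1)$ segment and a \texttt{)} inserted at the beginning of the row-$i$ segment.

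\textbf{Step 2: show these two new brackets pair with each other.} Between them, the word reads the rest of row $i+1$ of $U$ (after the new first box). This segment contains no $i$'s adjacent in a way that could intervene: row $i+1$ of a semistandard tableau has entries at least $i+1$, so it contributes only \texttt{(}'s, which are $i+1$'s. The new \texttt{)} at the start of row $i$ would therefore pair with the last of those \texttt{(}'s rather than with the new one. The fix is to pair the new \texttt{)} with the rightmost $i+1$ of row $i+1$ and argue that the combinatorics is unchanged up to relabeling.

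The cleaner route is a direct count. Let $a$ be the number of $i+1$'s in row $i+1$ of $U$, and consider the bracket substring between the row-$(i+1)$ start and the row-$i$ start. In $U$ it is $\texttt{(}^a$; in $U'$ it is $\texttt{(}^{a+1}$, followed in both cases by row $i$. Row $i$ of $U'$ begins with an extra \texttt{)}, followed by row $i$ of $U$, which itself begins with its $i$'s (\texttt{)}'s) and then its $i+1$'s. In $U'$, the extra \texttt{)} cancels one extra \texttt{(}. Since bracket cancellation is a local reduction $\texttt{()}\to\varnothing$ and the reduced word is unique, inserting $\texttt{(}$ and $\texttt{)}$ so that they become adjacent after reducing the intermediate $\texttt{(}^a$-vs-$\texttt{)}$ stretch yields the same reduced word. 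Hence the multiset of unpaired brackets, and their positions, correspond to the same letters of $U$; the only change is which specific $i+1$ in row $i+1$ is matched.

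\textbf{Step 3: conclude.} The leftmost unpaired \texttt{(} and the rightmost unpaired \texttt{)} are the same boxes of $U$ (shifted one column right) in both tableaux. So changing that box commutes with adding the column. Moreover, adding the column $(1,\ldots,d)$ to $\tabe_i U$ or $\tabf_i U$ is legal, since semistandardness is preserved. This gives $\tabe_i\circ\operatorname{shift}=\operatorname{shift}\circ\tabe_i$, and similarly for $\tabf_i$, including the $\o$ case.

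The main obstacle is Step 2: verifying carefully that the reduced bracket word is unchanged, given that the paired partner of the new \texttt{)} may not be the new \texttt{(}. I would handle this by uniqueness of the reduced word under the reduction $\texttt{()}\to\varnothing$.

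As an alternative, one could argue via Stembridge: shift corresponds to tensoring with the crystal of $\det$, a one-element crystal. Combined with the tensor product rule and reading words, this immediately gives commutation.
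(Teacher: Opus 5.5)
Your Step~2 has a genuine gap. Uniqueness of the reduced word only shows that the \emph{abstract} reduced bracket words of $U$ and $U'$ agree, and that comes for free. Write $A=\texttt{(}^{a}$ for the brackets of row $i+1$ of $U$ and $B$ for everything read after it. The two words are then $AB$ and $\texttt{(}A\texttt{)}B$, and $\texttt{(}^{a+1}\texttt{)}$ reduces to $\texttt{(}^{a}$ whatever row $i$ looks like.

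But $\tabe_i$ and $\tabf_i$ are determined by \emph{which boxes} carry the unpaired brackets. As you noticed, when $a\ge 1$ the canonical matching in $U'$ pairs the new \texttt{)} with the last $i+1$ of row $i+1$, so the row-$(i+1)$ \texttt{(}'s are matched differently. Your ``relabeling'' is harmless only if none of those \texttt{(}'s survives unpaired. The sentence ``hence \ldots\ their positions correspond to the same letters of $U$'' assumes this without proof, and it does not follow from what you use. Take $d=2$, $i=1$, and the filling of shape $(1,1)$ with a $2$ in each row; it satisfies everything your argument uses except column-strictness. The word $22$ has the bottom box as leftmost unpaired \texttt{(}. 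After adjoining the column $(1,2)$, the word $2212$ has the \emph{new} box as leftmost unpaired \texttt{(}.

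The missing ingredient is column-strictness, which is the heart of the paper's proof. The $a$ entries $i+1$ of row $i+1$ fill its first $a$ boxes, and the boxes directly above them must equal $i$. So row $i$ has $b\ge a$ entries equal to $i$. The relevant bracket words are then $\texttt{(}^{a}\texttt{)}^{b}\texttt{(}^{c}W$ for $U$ and $\texttt{(}^{a+1}\texttt{)}^{b+1}\texttt{(}^{c}W$ for $U'$, where $W$ comes from rows $1,\ldots,i-1$ and is unchanged. Since $b\ge a$, every \texttt{(} from row $i+1$, old or new, is matched inside the block. In both cases the surviving \texttt{)}'s of the block are the $i$'s in columns $a+1,\ldots,b$ of row $i$ of $U$ (shifted one column right in $U'$), and the rest of the matching is literally identical. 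With that inserted, your Step~3 goes through.

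Your alternative can also be made rigorous, most simply with the column reading word. There the new column contributes $d\,(d-1)\cdots 1$ at the very start, hence a self-cancelling initial $\texttt{()}$. This relies on the standard fact that the tableau operators can be computed from the column reading word, which the paper does not use, and as written it is only a sketch.
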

\begin{proof}
If $T$ is a straight-shape semistandard tableau then $\operatorname{shift}(T)$
adds a full column of $1,2,\ldots,d$.
In the $i,i+1$ bracket assignment, this adds a {\tt (} at the start of row $i+1$ and a {\tt )} at the start of row
$i$. Column-strictness forces every $i+1$ in row $i+1$ to sit directly below an $i$ in row $i$, so the {\tt (}'s
from row $i+1$ and the {\tt )}'s beginning row $i$ cancel completely, both before and after the shift.
Hence the set of unpaired brackets determining the crystal
operators is unaffected.
\end{proof}

\begin{proposition} \label{prop:tableau-shift-isomorphism}
For any $k\in\mathbb Z$, the map $\operatorname{shift}^k$ is a shifted
crystal isomorphism from $\mathcal{B}_\gamma$ to
$\mathcal{B}_{\operatorname{shift}^k(\gamma)}$ with shift
$H=(k,\ldots,k)$.
\end{proposition}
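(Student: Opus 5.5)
We need to show that $\operatorname{shift}^k:\mathcal{B}_\gamma\to\mathcal{B}_{\operatorname{shift}^k(\gamma)}$ is a bijection that intertwines the crystal operators and shifts weights by $H=(k,\ldots,k)$. It suffices to treat $k=1$. The case $k=-1$ then follows by inverting: if $\psi$ is a bijection satisfying \eqref{eq:shifted-crystal-morphism} with shift $H$, then $\psi^{-1}$ satisfies it with shift $-H$. General $k$ follows by composition, since shifts add under composition.

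\textbf{Bijectivity.} Shift is a bijection on rational tableaux with entries in $[d]$, and its inverse is explicit. The inverse removes the leftmost column of $U'$, which has full length $d-\ell(\mu)$ after the shift. It then prepends to $V'$ the complementary column in $[d]$. The Stembridge constraint guarantees that both directions produce valid rational tableaux of the stated shapes. So $\operatorname{shift}$ maps $\mathcal{B}_\gamma$ bijectively onto $\mathcal{B}_{\operatorname{shift}(\gamma)}$.

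\textbf{Weight.} Removing a column $c$ from $V$ subtracts $-\sum_{j\in c}\varepsilon_j$ from the weight. Adding its complement to $U$ adds $\sum_{j\notin c}\varepsilon_j$. The net change is $\sum_{j\in[d]}\varepsilon_j=(1,\ldots,1)$. Hence $\wt(\operatorname{shift}(T))=\wt(T)+H$ with $H=(1,\ldots,1)$. Moreover $\langle H,\alpha_i^\vee\rangle=0$ for all $i\in[d-1]$.

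\textbf{Intertwining.} For $T\in\mathcal{B}_\gamma$, definition \eqref{eqn:Apr30ccc} gives
\[\tabe_i(\operatorname{shift}T)=\operatorname{shift}^{-k}\tabe_i(\operatorname{shift}^{k+1}T)=\operatorname{shift}\bigl(\operatorname{shift}^{-(k+1)}\tabe_i(\operatorname{shift}^{k+1}T)\bigr)=\operatorname{shift}(\tabe_i T).\]
Here $k$ is taken large enough that $\operatorname{shift}^{k}(\operatorname{shift}T)$ has straight shape. The last equality uses the independence of \eqref{eqn:Apr30ccc} from $k$, which is exactly what Lemma~\ref{lemma:June18zzz} provides. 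When $\tabe_i$ returns $\o$, both sides are $\o$, using the convention that $\operatorname{shift}(\o)=\o$. The same computation applies to $\tabf_i$.

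\textbf{Where care is needed.} The only delicate point is that when $\gamma$ is already straight, the crystal structure on $\mathcal{B}_\gamma$ is the given bracket structure rather than one defined via \eqref{eqn:Apr30ccc}. These two agree by Lemma~\ref{lemma:June18zzz} applied with $k\ge 0$. The main obstacle is therefore bookkeeping: confirming that \eqref{eqn:Apr30ccc} is well-defined and consistent for all $k$, and tracking the sign conventions on $H$.

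Alternatively, once the intertwining relations and the weight shift are established, Proposition~\ref{prop:inj-implies-bij} applied on each connected component yields the crystal-isomorphism conclusion. Injectivity is immediate from the bijectivity shown above.
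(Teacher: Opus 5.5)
Your proposal is correct and follows the paper's route. You settle $k=1$ using Lemma~\ref{lemma:June18zzz}, which makes \eqref{eqn:Apr30ccc} well-defined so that intertwining is formal, together with the weight shift $(1,\ldots,1)$; you then get $k=-1$ by inverting and general $k$ by iterating. The only difference is minor: you check bijectivity of $\operatorname{shift}$ via its explicit inverse, whereas the paper gets surjectivity (and the crystal axioms on the rational side) from Proposition~\ref{prop:inj-implies-bij}, which you also mention as an alternative.
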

\begin{proof}
The case $k=1$ follows from Lemma~\ref{lemma:June18zzz} and
Proposition~\ref{prop:inj-implies-bij}. Since the inverse of a shifted
crystal isomorphism with shift $H$ is a shifted crystal isomorphism with
shift $-H$, the case $k=-1$ holds. The general case follows by iteration.
\end{proof}

The following fact is then immediate:
\begin{corollary}[Highest-weight rational tableaux]\label{cor:superlowertab}
Let $T=U\div V\in\mathcal B_{\lambda\div\mu}^{(d)}$.
Then $T$ is highest-weight if and only if $U$ is
supersemistandard and $V$ is the lowest-weight tableau of shape
$\mu$. Equivalently, every column of $V$ of height $h$ is
$d-h+1,d-h+2,\ldots,d$ from top to bottom.
\end{corollary}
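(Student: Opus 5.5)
The plan is to transport the question to straight shapes via Proposition~\ref{prop:tableau-shift-isomorphism}. Fix $T=U\div V\in\mathcal B^{(d)}_{\lambda\div\mu}$ and pick $k=\ell(\mu)$, or more generally $k$ at least the number of columns of $V$, which is $\mu_1$. Then $\operatorname{shift}^k(T)$ is a straight-shape semistandard tableau $S$. Since $\operatorname{shift}^k$ is a shifted crystal isomorphism, it intertwines every $\tabe_i$, so $\tabe_i(T)=\o$ for all $i$ if and only if $\tabe_i(S)=\o$ for all $i$. Thus $T$ is highest-weight exactly when $S$ is. For straight shapes the standard fact applies: $S$ is highest-weight iff it is supersemistandard, i.e. row $j$ consists only of $j$'s. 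I will take this as known from \cite{Bump.Schilling}; it is also checkable from the bracket rule, since an unpaired {\tt (} exists for some $i$ exactly when $S$ is not the Yamanouchi filling of its shape.

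Next I describe $S$ concretely. Shifting $\mu_1$ times removes the columns $c_1,\dots,c_{\mu_1}$ of $V$ one at a time, left to right. Each removed column is replaced by its complement in $[d]$, prepended as a new leftmost column of $U$. So $S$ is obtained by placing, to the left of $U$, the complement columns $[d]\setminus c_{\mu_1},\dots,[d]\setminus c_1$ (read left to right), followed by $U$.

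The key step is then to characterize when this $S$ is supersemistandard. A column $C$ of a supersemistandard tableau of height $h$ is exactly $\{1,\dots,h\}$. The complement of $c_j$ is therefore an initial segment $\{1,\dots,d-h_j\}$ iff $c_j=\{d-h_j+1,\dots,d\}$, where $h_j$ is the height of $c_j$. So every column of $V$ must have the stated form, which is the lowest-weight tableau of shape $\mu$. This is the ``equivalently'' clause, via the usual fact that the unique lowest-weight element of $\mathcal B_\mu$ is the filling whose columns are bottom-justified in $[d]$.

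Rows of $S$ meeting $U$ consist of entries of $U$ in row $j$, together with the entry $j$ from each complement column long enough to reach row $j$. So $S$ has only $j$'s in row $j$ iff the complement columns are initial segments and $U$ has only $j$'s in row $j$, i.e. $U$ is supersemistandard. The converse direction is the same computation read backwards.

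The only mildly delicate point is the bookkeeping: checking that the rows of $S$ are the concatenations just described, with no reindexing of rows caused by the shift. This holds because shift prepends a full-height-compatible column on the left without moving the existing entries of $U$.
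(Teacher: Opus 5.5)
Your proof is correct and is essentially the paper's argument: the paper calls this corollary immediate from Proposition~\ref{prop:tableau-shift-isomorphism}, meaning you shift $T$ to a straight-shape tableau, use that straight-shape highest-weight elements are exactly the supersemistandard tableaux, and observe that the prepended complement columns are initial segments exactly when each column of $V$ of height $h$ is $d-h+1,\ldots,d$. One small slip: the number of shifts needed to empty $V$ is its number of columns, $\mu_1$, not $\ell(\mu)$; you do use $\mu_1$ in the actual computation, so nothing breaks.
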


\subsection{Crystal operators on words}\label{subsec:CryStembridge}
Fix $d\in {\mathbb Z}_{>0}$. Let ${\mathcal W}_d^{\pm}$ be the set of words in the alphabet
$[d]\cup [\barindex d]$ where $[\barindex d]=\{\barindex{1},\ldots,\barindex{d}\}$. 
For ${\bs w}=w_1\cdots w_k\in {\mathcal W}_d^{\pm}$, let $\ell({\bs w}) := k$ be its \emph{length}. The \emph{weight} of ${\bs w}$ is the vector $\operatorname{wt}({\bs w})\in{\mathbb Z}^d$ whose $i$-th component equals the number of $i$'s minus the number of $\barindex{i}$'s in ${\bs w}$.

Let ${\mathcal W}_{d}^{-+}$ be words in ${\mathcal W}_d^{\pm}$ that are a barred word followed by an
unbarred word. Let ${\mathcal W}_d^{+-}$ denote the set of words that are an unbarred word followed
by a barred word. If $\bs{w}\in {\mathcal W}_{d}^{\operatorname{sep}}:= {\mathcal W}_d^{-+}\cup {\mathcal W}_d^{+-}$,
we call it \emph{separated}. Furthermore, let ${\mathcal W}_{d}$ be the set of unbarred words and ${\mathcal W}_{d}^-$ be the set of barred words.
Each of these sets inherits the above weight function.

The \emph{modified word} of a separated word is obtained by reversing the barred part. Thus, if ${\bs n}= \barindex{n_1},\ldots, \barindex{n_\ell}$ and ${\bs p}=p_1,\ldots, p_k$, then 
\[\operatorname{rev}_{-}({\bs n}{\bs p}) = \barindex{n_\ell},\ldots, \barindex{n_1},p_1,\ldots, p_k, \qquad 
\operatorname{rev}_{-}({\bs p}{\bs n}) = p_1,\ldots, p_k,\barindex{n_\ell},\ldots, \barindex{n_1}.\] 

We define \emph{rational crystal operators} on ${\mathcal W}_d^{-+}$ and 
${\mathcal W}_d^{+-}$ extending the  
Kashiwara crystal operators on ${\mathcal W}_d$. 
Take the modified word $\operatorname{rev}_{-}({\bs w})$, and associate the brackets:
\[
i\mapsto {\texttt{)}},\qquad i{+}1\mapsto {\texttt (},\qquad \barindex{i}\mapsto {\texttt (},\qquad \barindex{i{+}1}\mapsto 
{\texttt )}.
\]
(If ${\bs w}\in {\mathcal W}_d$, this is the usual Kashiwara parenthesization.)
To define $\worde_i$, locate the leftmost unpaired {\tt (}; if none exists, $\worde_i({\bs w})=\o$.
Change the corresponding entry in $\operatorname{rev}_{-}({\bs w})$: either $i+1$ is changed to an $i$ or $\barindex{i}$ is changed to a $\barindex{i+1}$. The result is 
$\operatorname{rev}_{-}({\bs w}')$ for some ${\bs w}'\in {\mathcal W}_d^{-+}$ (respectively,
${\mathcal W}_d^{+-}$) if
${\bs w}\in {\mathcal W}_d^{-+}$ (respectively, ${\mathcal W}_d^{+-}$). We set $\worde_i({\bs w}) := {\bs w}'$.
Now, $\wordf_i$ is similarly defined, where we locate
the rightmost unpaired {\tt )}, if it exists.
If no such bracket exists, $\wordf_i({\bs w})=\o$. In $\operatorname{rev}_{-}({\bs w})$ this either changes an $i$  to an $i+1$ or an $\barindex{i+1}$ to a $\barindex{i}$.
The resulting word is again of the form
$\operatorname{rev}_{-}(\bs w')$, and $\wordf_i(\bs w):=\bs w'$.

\begin{theorem} \label{thm:stembridge-word-crystals}
\begin{enumerate}
\item[(I)] $({\mathcal W}^{-+}_d, \operatorname{wt}, \{\worde_i, \wordf_i\})$ is a crystal. If $\mathcal{C}\subseteq {\mathcal W}^{-+}_d$  is a connected component, 
$\ins_r:{\mathcal C}\to {\mathcal B}_{\gamma}$
is a crystal isomorphism, where $\gamma$ is the rational shape of $\ins_r({\bs w})$ independent of ${\bs w}\in {\mathcal C}$.
\item[(II)] The analogous statement holds for ${\mathcal W}_d^{+-}$, replacing
$\ins_r({\bs w})$ by $\ins_c({\bs w})$.
\end{enumerate}
\end{theorem}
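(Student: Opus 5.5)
We reduce to the classical (unbarred) case by turning barred letters into unbarred columns, mirroring how the shift operator is used for rational tableaux. For a barred letter $\barindex{i}$, consider the word $c(\barindex{i})$ obtained by reading the column $[d]\setminus\{i\}$, in decreasing order for row insertion. Its RSK insertion is a single column, and it has weight $-\varepsilon_i+(1,\ldots,1)$. This substitution is exactly the one underlying ${\operatorname{col}}^*$ in the introduction. For ${\bs w}={\bs n}{\bs p}\in{\mathcal W}_d^{-+}$, let $\Phi({\bs w})\in{\mathcal W}_d$ be the unbarred word obtained by replacing each barred letter by $c(\barindex{\cdot})$, keeping the original order of the letters.

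The plan has three steps.

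First, we show that $\Phi$ intertwines the word crystal operators $\worde_i,\wordf_i$ with the Kashiwara operators on ${\mathcal W}_d$. The key point is the bracketing of $c(\barindex{j})$ for the pair $i,i+1$:
\begin{itemize}
\item if $j\notin\{i,i+1\}$, it contains both $i+1$ and $i$ in decreasing order, giving a matched pair {\tt ()} that is invisible;
\item if $j=i$, it contains only $i+1$, giving an unmatched {\tt (}, which agrees with $\barindex{i}\mapsto$ {\tt (};
\item if $j=i+1$, it contains only $i$, giving {\tt )}, which agrees with $\barindex{i+1}\mapsto$ {\tt )}.
\end{itemize}
Changing that $i+1$ to $i$ in $c(\barindex{i})$ yields $c(\barindex{i+1})$, matching the rule that $\worde_i$ sends $\barindex{i}$ to $\barindex{i+1}$. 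One subtlety is the reversal $\operatorname{rev}_-$ of the barred block. For this reason we should instead read the barred block in reverse order in $\Phi$, or else check that the convention for $\ins_r$ of barred letters corresponds to the reversed order. I would fix the convention so that the bracket sequences coincide letter by letter.

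Second, by \eqref{eq:shifted-crystal-morphism}, $\Phi$ is injective and commutes with the operators, and weights shift by $(\ell({\bs n}),\ldots,\ell({\bs n}))$. The difficulty is that $\ell({\bs n})$ must be constant on a component; it is, because crystal operators do not change which letters are barred. Since ${\mathcal W}_d$ is a crystal (tensor power of the standard crystal), the image of $\Phi$ is closed under the operators. Each component ${\mathcal C}$ therefore maps injectively into a component of ${\mathcal W}_d$, which is isomorphic via $\ins_r$ to some ${\mathcal B}_\lambda$. We then apply Proposition~\ref{prop:inj-implies-bij}. Here some care is needed about surjectivity onto that component, but image-closure under the operators together with connectivity of ${\mathcal B}_\lambda$ gives it. Conclude that ${\mathcal C}$ is a crystal and $\ins_r\circ\Phi$ is a shifted isomorphism onto ${\mathcal B}_\lambda$.

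Third, and this is the main obstacle, we identify $\ins_r(\Phi({\bs w}))$ with $\operatorname{shift}^{k}(\ins_r({\bs w}))$ for $k=\ell({\bs n})$. This is where the definitions \eqref{eqn:May29ddd} and \eqref{eqn:May29eee} enter. I would prove it by induction on the word, using the base cases $(\varnothing\div V)\leftarrow\barindex{i}$ (the barred prefix fills $V$, whose shift is inserting complementary columns) and then unbarred insertions commuting with shift by definition. The barred prefix requires checking that row-inserting the complement columns successively reproduces $\operatorname{shift}^k(\varnothing\div V)$; this is a classical column/complement duality (via Schützenberger or the transpose–complement symmetry of RSK). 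This duality is the delicate part and is where the $\operatorname{rev}_-$ convention is pinned down. Granting it, $\ins_r=\operatorname{shift}^{-k}\circ\ins_r\circ\Phi$ is a composition of shifted isomorphisms whose net shift is $0$, using Proposition~\ref{prop:tableau-shift-isomorphism}. Hence $\ins_r$ is a genuine crystal isomorphism ${\mathcal C}\to{\mathcal B}_\gamma$, where $\gamma=\operatorname{shift}^{-k}(\lambda)$ is constant on ${\mathcal C}$.

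Part (II) is the same argument with column insertion, with the barred suffix handled via \eqref{eqn:May26bbb}.
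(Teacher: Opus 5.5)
Your plan follows the paper's proof. With the barred block reversed, your $\Phi$ is exactly the ordinification ${\bs w}\mapsto{\bs w}^+$. Your first two steps are Lemma~\ref{lem-dual-words-crystals}, Lemma~\ref{lemma:May13aaa} and Proposition~\ref{prop:ordinification-isomorphism}. The closing composition with $\operatorname{shift}^{-\ell}$ via Proposition~\ref{prop:tableau-shift-isomorphism} is \S\ref{sec:proof32}.

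Two points need attention. First, the reversal is not a choice you can make: step~1 forces it, and step~3 is false without it. Take $d=3$ and ${\bs n}=\barindex{1}\barindex{2}$. Then $\ins_r({\bs n})=\varnothing\div V$ with $V$ the one-row tableau $12$, and $\operatorname{shift}^2(\varnothing\div V)$ has rows $12$ and $33$. This equals $\ins_r(3132)=\ins_r(\operatorname{rev^c}({\bs n}))$, whereas the unreversed word $3231$ inserts to a tableau of shape $(2,1,1)$. So your fallback of checking the convention for $\ins_r$ is not available; $\ins_r$ is fixed, and the reversal must be built into $\Phi$.

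Second, and this is the real gap, you grant the identity
$\operatorname{shift}^{\ell}\bigl(\varnothing\div\ins_r(n_1\cdots n_\ell)\bigr)=\ins_r(\operatorname{rev^c}({\bs n}))$.
It is the one nonformal input, and it is exactly the first computation in Lemma~\ref{lem:duality-shift-equiv}. The paper proves it by induction on $\ell$:
\begin{itemize}
\item it starts from $\emptyset\leftarrow\barindex{i}=\operatorname{shift}^{-1}(\emptyset\leftarrow\barindex{i}^c)$;
\item it pulls shifts out with Theorem~\ref{thm:Steminsprops}(I), which is also what lets you use the exponent $\ell$ rather than $k\gg0$ (the definition alone does not give this);
\item it uses Stembridge's commutation theorem, Theorem~\ref{thm:Steminsprops}(II), to move the unbarred block $\barindex{i}^c$ past the remaining barred letters. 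This move puts $\barindex{i}^c$ last and is what produces the reversal.
\end{itemize}

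If you want to stay inside ordinary RSK, a vague appeal to Sch\"utzenberger is not a proof, but a crystal argument works:
\begin{itemize}
\item For unbarred $u=u_1\cdots u_\ell$, set $u^\dagger=\operatorname{rev^c}(\barindex{u_1}\cdots\barindex{u_\ell})$. Reversing and flipping the bracket sequence, up to contiguous \texttt{()}'s, gives $\worde_i(u^\dagger)=(\wordf_i u)^\dagger$.
\item The rotated complement $T\mapsto\operatorname{shift}^\ell(\varnothing\div T)$ intertwines $\tabf_i$ with $\tabe_i$ in the same way; you can check this on column reading words.
\item Hence $u\mapsto\ins_r(u^\dagger)$ and $u\mapsto\operatorname{shift}^\ell(\varnothing\div\ins_r(u))$ are both twisted maps on a connected component. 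They agree at its lowest-weight element $u_0$, since both give the unique highest-weight tableau of weight $\ell\mathbf{1}-\operatorname{wt}(u_0)$. So they agree everywhere.
\end{itemize}

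For (II), the paper goes through Stroomer's Lemma~6.3 and the swap map (Lemma~\ref{lem:col-ins-row-ins}). Your route is simpler once the identity is in hand. Column insertion reads right to left, so the barred suffix is inserted first and gives the same $\varnothing\div\ins_r(n_1\cdots n_\ell)$. Unbarred column insertion into a straight tableau also commutes with prepending a full column. So (II) reduces to the same identity.
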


\begin{example}
The word $\barindex{1}1$ forms its own connected component in ${\mathcal W}_{2}^{-+}$
and it is isomorphic to ${\mathcal B}_{\varnothing}$ as $\operatorname{ins}_r(\operatorname{\barindex{1}1})=\varnothing$,
agreeing with  Theorem~\ref{thm:stembridge-word-crystals}(I).
Now $1\barindex{1}$ is in the connected component ${\mathcal C}$
$1\barindex{2} \xrightarrow{\wordf_1} 1\barindex{1} \xrightarrow{\wordf_1} 2\barindex{1}$
in ${\mathcal W}_2^{+-}$. While $\ins_r(1\barindex{1})=\varnothing$, evidently
${\mathcal C}\not\cong {\mathcal B}_{\varnothing}$. Rather ${\mathcal C}\cong 
{\mathcal B}_{\ytableausetup{boxsize=.4em} \ydiagram{1}\div \ydiagram{1}}$ and 
$\ins_c(1\barindex{1})=\ytableausetup{boxsize=.9em}\begin{ytableau}2\end{ytableau}
\div\begin{ytableau}{2}\end{ytableau}$, consistent with Theorem~\ref{thm:stembridge-word-crystals}(II).
\end{example}

In \S~\ref{sec:proof32}, we deduce Theorem~\ref{thm:stembridge-word-crystals} using 
Proposition~\ref{prop:inj-implies-bij}, the material of \S~\ref{sec:ordinification} and the well-known fact:
\begin{theorem}\label{thm:basicwordsarecrystal}
$(\mathcal W_d,\operatorname{wt},\{\worde_i,\wordf_i\})$ is a
$\operatorname{GL}_d$-crystal. Moreover, if $\mathcal C\subseteq
\mathcal W_d$ is a connected component, then
$\operatorname{ins}_r:\mathcal C\longrightarrow\mathcal B_\lambda$
is a crystal isomorphism, where $\lambda$ is the shape of
$\operatorname{ins}_r(w)$, independent of $w\in\mathcal C$.
\end{theorem}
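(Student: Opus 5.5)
This is the classical statement for ordinary words, so I would prove it by reduction to the standard theory of tableau crystals, via the Knuth/plactic compatibility of the Kashiwara operators. The plan has three steps.

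\emph{Step 1: operators on $\mathcal W_d$ are the tensor-product rule.} Identify a word $w_1\cdots w_k$ with $w_1\otimes\cdots\otimes w_k\in\mathcal B_{\square}^{\otimes k}$, where $\mathcal B_{\square}$ is the standard crystal $1\to2\to\cdots\to d$. The bracketing rule defining $\worde_i,\wordf_i$ (an $i$ is {\tt )}, an $i+1$ is {\tt (}, cancel matched pairs, act on the leftmost unpaired {\tt (} or rightmost unpaired {\tt )}) is exactly the signature rule for tensor products in the convention of \cite[Chapter~2]{Bump.Schilling}. Tensor products of seminormal crystals are seminormal crystals. So $\mathcal W_d$ is a $GL_d$-crystal, with weight equal to the content.

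\emph{Step 2: $\ins_r$ intertwines the operators.} The key input is that $\worde_i,\wordf_i$ commute with Knuth moves. Concretely, if $\bs w\equiv\bs w'$ are Knuth equivalent, then $\worde_i\bs w\equiv\worde_i\bs w'$ (both $\o$ or both words), and likewise for $\wordf_i$. To check this, it suffices to examine one elementary Knuth relation $yzx\leftrightarrow yxz$ or $xzy\leftrightarrow zxy$ with $x<y\le z$ (resp.\ $x\le y<z$). Only the letters equal to $i$ or $i+1$ matter. A local case check on three letters shows two things: the unpaired bracket pattern seen from outside is unchanged, and the selected letter changes in a way compatible with the relation. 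This finite case analysis is the main obstacle. It is standard (Lascoux--Sch\"utzenberger; see also \cite[Chapter~8]{Bump.Schilling}), and I would cite it rather than redo it.

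Next, since $\bs w\equiv\operatorname{row}(\ins_r(\bs w))$ and the row word of a tableau determines it, the operators on the tableau (defined through its row word) agree with the operators on $\bs w$ after insertion. That is,
\[\ins_r(\worde_i\bs w)=\tabe_i(\ins_r\bs w),\qquad \ins_r(\wordf_i\bs w)=\tabf_i(\ins_r\bs w),\]
which uses that $\tabe_i(\operatorname{row}T)$ is again the row word of a semistandard tableau. Weights are clearly preserved. In particular, the shape of $\ins_r$ is constant on a connected component $\mathcal C$; call it $\lambda$.

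\emph{Step 3: bijectivity on components.} Knowing that $\ins_r:\mathcal C\to\mathcal B_\lambda$ intertwines the operators, $\mathcal B_\lambda$ is connected, and $\mathcal C$ is a connected crystal, it remains to show bijectivity. Proposition~\ref{prop:inj-implies-bij} (with $H=0$) reduces this to injectivity. For injectivity, use the RSK recording tableau $Q(\bs w)$, which is invariant under $\worde_i,\wordf_i$. This is again a consequence of Step 2, since crystal operators preserve positions of letters and act compatibly with growth of the insertion shape on prefixes: apply the argument to each prefix, noting that the operator on $\bs w$ restricts to a prefix either as the operator or as the identity.

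Thus $Q$ is constant on $\mathcal C$, and by the RSK bijection the pair $(\ins_r,Q)$ is injective. Hence $\ins_r|_{\mathcal C}$ is injective, and Proposition~\ref{prop:inj-implies-bij} yields a crystal isomorphism $\mathcal C\cong\mathcal B_\lambda$.
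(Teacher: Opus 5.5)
Your proof is correct, but there is no proof in the paper to compare it against. The paper quotes this statement as a well-known fact about ordinary words (the standard theory in \cite{Bump.Schilling}). Its only role is as the ``RSK insertion, the second map'' step in the proof of Theorem~\ref{thm:stembridge-word-crystals}.

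What you wrote is the standard argument:
\begin{itemize}
\item Coplacticity of $\worde_i,\wordf_i$, together with closure of $\mathcal B_\lambda$ under $\tabe_i,\tabf_i$, gives $\ins_r\circ\worde_i=\tabe_i\circ\ins_r$, and likewise for $\wordf_i$, including the null cases.
\item Invariance of the recording tableau gives injectivity on a component.
\item Proposition~\ref{prop:inj-implies-bij} with $H=0$, using connectedness of $\mathcal B_\lambda$, then finishes.
\end{itemize}
This is the same ``injective intertwiner into a connected crystal'' template the paper uses for Proposition~\ref{prop:July24abc} and Theorem~\ref{prop:quiverRSK-crystal-isom}. Your recording-tableau step is exactly the fact behind the paper's later claim that a Knuth class meets each component in at most one word.

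The one step worth writing out is your prefix claim. For $\bs w=\bs a\bs b$ one has $\worde_i(\bs w)\in\{\worde_i(\bs a)\bs b,\ \bs a\,\worde_i(\bs b)\}$, and similarly for $\wordf_i$; this is the unbarred case proved at the start of Lemma~\ref{lem:separated-crystal-combined-word}. The key observation is that a {\tt )} in $\bs a$ is matched the same way in $\bs a$ as in $\bs w$, and the leftmost unpaired {\tt (} of $\bs w$, if it lies in $\bs a$, is the leftmost unpaired {\tt (} of $\bs a$. Applying this to every prefix shows the sequence of insertion shapes is unchanged, hence so is $Q(\bs w)$.

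Step~1 is harmless but not needed. Proposition~\ref{prop:inj-implies-bij} already shows each component satisfies Definition~\ref{def:crystal}, hence so does their disjoint union $\mathcal W_d$.
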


\subsection{Ordinification}\label{sec:ordinification}
 We show that each connected component of 
${\mathcal W}^{-+}_d$ and ${\mathcal W}_d^{+-}$ is shifted crystal isomorphic to a connected component of 
${\mathcal W}_d$. Given $ \barindex{i}\in[\barindex{d}]$, the \emph{complement} of $\barindex{i}$ is the word $\barindex{i}^c := d \ d-1\ \cdots \ i+1 \ i-1 \ \cdots 1$; 
the complement of $i\in[d]$ is $i^c := \barindex{d} \ \barindex{d-1} \ \cdots\ \barindex{i+1}\ 
\barindex{i-1} \ \cdots \ \barindex{1}$. The \emph{reverse complement} of ${\bs w}$ is 
\[
\operatorname{rev^c}({\bs w}) = w_{\ell({\bs w})}^c \cdots w_1^c \in {\mathcal W}_d
\]
formed by reversing ${\bs w}$ and then replacing each entry $\barindex{i}$ by $\barindex{i}^c$ and $i$ by $i^c$. 

To ${\bs w} = {\bs n}{\bs p}\in {\mathcal W}_d^{-+}$,
the \emph{ordinification} of ${\bs w}$ is ${\bs w}^+ := \operatorname{rev^c}({\bs n}){\bs p} \in {\mathcal W}_d$,
and for ${\bs w} = {\bs p}{\bs n}\in {\mathcal W}_d^{+-}$,
it is ${\bs w}^+ := {\bs p}\operatorname{rev^c}({\bs n}) \in {\mathcal W}_d$.
Ordinification commutes with $\worde_i, \wordf_i$:

\begin{lemma} \label{lem-dual-words-crystals}
For any ${\bs w}\in {\mathcal W}_d^{-+}\cup {\mathcal W}_d^{+-}$ and any $i$,
\[\worde_i({\bs w})^+ = \worde_i({\bs w}^+) 
\quad \text{and} \quad
\wordf_i({\bs w})^+ = \wordf_i({\bs w}^+) 
\]
\end{lemma}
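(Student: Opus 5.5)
The argument is a local bracket comparison. The crystal operators on ${\bs w}$ are computed from the bracket sequence of $\operatorname{rev}_-({\bs w})$. Those on ${\bs w}^+$ are the usual Kashiwara ones on the ordinary word ${\bs w}^+$. I will show that, after deleting matched pairs, the two bracket sequences have the same reduced form, and that the unpaired brackets correspond letter by letter. The unbarred part ${\bs p}$ appears identically, and in the same relative position, in $\operatorname{rev}_-({\bs w})$ and in ${\bs w}^+$. So everything reduces to comparing the barred part, read as $\operatorname{rev}_-({\bs n})=\barindex{n_\ell}\cdots\barindex{n_1}$, with $\operatorname{rev^c}({\bs n})=\barindex{n_\ell}^c\cdots\barindex{n_1}^c$. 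The two words list the barred letters in the same order, so the comparison is made block by block.

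For a single letter $\barindex{j}$, I look at the $i$-brackets of the block $\barindex{j}^c=d\,(d-1)\cdots\widehat{j}\cdots 1$, which is decreasing and omits $j$. If $j\notin\{i,i+1\}$, the block contains $i+1$ followed by $i$, giving ``\texttt{()}'', which cancels internally; likewise $\barindex{j}$ contributes no bracket. If $j=i$, the block contains $i+1$ but not $i$, so it reduces to a single ``\texttt{(}'', matching $\barindex{i}\mapsto\texttt{(}$. If $j=i+1$, the block contains $i$ but not $i+1$, so it reduces to ``\texttt{)}'', matching $\barindex{i+1}\mapsto\texttt{)}$. Bracket matching is unaffected by inserting or deleting adjacent matched pairs ``\texttt{()}''. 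Hence the reduced bracket words of ${\bs w}^+$ and $\operatorname{rev}_-({\bs w})$ coincide, and the unpaired brackets correspond: an unpaired bracket of ${\bs w}^+$ coming from a barred block sits at the unique surviving $i$ or $i+1$ of the block $\barindex{n_t}^c$ with $n_t\in\{i,i+1\}$.

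With this correspondence I check the two operators. Consider $\worde_i$. If the leftmost unpaired \texttt{(} lies in ${\bs p}$, both sides change the same letter $i+1\mapsto i$ in ${\bs p}$, and ordinification of the result is immediate. If it lies in the barred part, it comes from some $\barindex{n_t}=\barindex{i}$. On the ordinary side, $\worde_i$ changes the $i+1$ in $\barindex{i}^c$ to $i$, which turns the block into $d\cdots(i+2)\,i\,(i-1)\cdots1=\barindex{i+1}^c$. On the rational side, $\barindex{i}$ becomes $\barindex{i+1}$. These agree, so $\worde_i({\bs w})^+=\worde_i({\bs w}^+)$. The case of $\wordf_i$ is symmetric: the relevant block is $\barindex{i+1}^c$, its $i$ becomes $i+1$, yielding $\barindex{i}^c$, and this matches $\barindex{i+1}\mapsto\barindex{i}$. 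If no unpaired bracket of the required kind exists, it exists on neither side, and both sides return $\o$.

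The main point needing care is that the matching really is local, which is the standard fact that reduction by cancelling ``\texttt{()}'' is confluent. One must also confirm that the unpaired bracket of ${\bs w}^+$ inside a block is the surviving letter and not one of the internally cancelled ones. That holds because within a block the $i+1$ (if present) precedes the $i$, so the only possible internal pair is exactly the one that cancels. The ${\mathcal W}_d^{-+}$ and ${\mathcal W}_d^{+-}$ cases differ only in whether the barred segment sits before or after ${\bs p}$, and the argument is identical in both.
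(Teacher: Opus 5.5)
Your proposal is correct and follows the same route as the paper. Like the paper, you compare block by block and show that the $i$-bracket sequence of ${\bs w}^+$ is that of $\operatorname{rev}_{-}({\bs w})$ with contiguous \texttt{()} pairs inserted, via the three cases $j=i$, $j=i+1$, $j\notin\{i,i+1\}$. You also spell out the step the paper leaves as ``the lemma follows'': changing the surviving letter in $\barindex{i}^c$ or $\barindex{i+1}^c$ turns it into $\barindex{i+1}^c$ or $\barindex{i}^c$, which matches the change made to the barred letter.
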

\begin{proof}
Let $\operatorname{parens}_i({\bs w})$ be the parenthesis sequence for ${\bs w}$ and $i$.
We claim that $\operatorname{parens}_i({\bs w}^+)$ is $\operatorname{parens}_i({\bs w})$ with some inserted contiguous {\tt ()}. Let $\barindex{j}\in{\bs n}$. We have three cases:
\begin{itemize}
\item $j = i$:
$
\operatorname{parens}_i(\barindex{i}^c) = \operatorname{parens}_i(d,\ldots,i+1,i-1,\ldots,1) = ``\text{{\tt (}}" = \operatorname{parens}_i(\barindex{i});
$
\item $j = i+1$:
$
\operatorname{parens}_i(\barindex{i+1}^c) = \operatorname{parens}_i(d,\ldots,i+2,i,\ldots,1) = ``\text{{\tt )}}" = \operatorname{parens}_i(\barindex{i+1});
$
\item $j \ne i, i+1$:
$
\operatorname{parens}_i(\barindex{j}^c) = \operatorname{parens}_i(d,\ldots,j+1,j-1,\ldots,1) = ``\text{\tt ()}"
$ but $\operatorname{parens}_i(\barindex{j}) = \emptyset$.
\end{itemize}
Thus, either $\operatorname{parens}_i(\barindex{j}^c) = \operatorname{parens}_i(\barindex{j})$ or the former is 
$ ``\text{\tt ()}"$ and the latter is $\emptyset$. Therefore, $\operatorname{parens}_i(\operatorname{rev^c}({\bs n}))$ is $\operatorname{parens}_i(\operatorname{rev}_{-}({\bs n}))$ 
with added ``\text{\tt ()}", as desired. The lemma follows.
%
\end{proof}

Let 
${\mathcal W}_{\ell,d}^{-+}=\{{\bs w}={\bs n}{\bs p}\in {\mathcal W}_d^{-+}: \ell({\bs n})=\ell\}$, 
and similarly for ${\mathcal W}_{\ell,d}^{+-}$. 
\begin{lemma}\label{lemma:May13aaa}
\begin{itemize}
\item[(I)]
For fixed $\ell\in {\mathbb Z}_{\geq 0}$, ordinification is injective on
${\mathcal W}_{\ell,d}^{-+}$ and on ${\mathcal W}_{\ell,d}^{+-}$.
\item[(II)] Ordinification maps a connected component ${\mathcal C}\subseteq 
{\mathcal W}_d^{-+}$ or ${\mathcal C}\subseteq {\mathcal W}_d^{+-}$ injectively into its image.
\end{itemize}
\end{lemma}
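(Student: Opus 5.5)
For (I) I will show directly that the barred and unbarred parts can be read back off ${\bs w}^+$. Take ${\bs w}={\bs n}{\bs p}\in{\mathcal W}_{\ell,d}^{-+}$ with ${\bs n}=\barindex{n_1}\cdots\barindex{n_\ell}$. Each complement $\barindex{j}^c=d\,(d-1)\cdots(j+1)(j-1)\cdots 1$ has length exactly $d-1$. Hence $\operatorname{rev^c}({\bs n})$ has length $\ell(d-1)$, and it is the prefix of ${\bs w}^+$ of that length. Cut this prefix into consecutive blocks of length $d-1$. Each block is a strictly decreasing word in $[d]$ missing exactly one letter, and the missing letter $j$ recovers $\barindex{j}$. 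The blocks are read in reverse order, so we recover ${\bs n}$ itself; undoing $\operatorname{rev^c}$ on a prefix of known length is just reading it in reverse. The remaining suffix of ${\bs w}^+$ is ${\bs p}$. Therefore ${\bs w}$ is determined by ${\bs w}^+$ and $\ell$, which gives injectivity. The case ${\mathcal W}_{\ell,d}^{+-}$ is symmetric: there ${\bs w}^+={\bs p}\operatorname{rev^c}({\bs n})$, and we use the suffix of length $\ell(d-1)$ instead of the prefix.

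For (II) I will reduce to (I) by proving that $\ell({\bs n})$ is constant on each connected component ${\mathcal C}$. By definition, the rational crystal operators $\worde_i,\wordf_i$ change a single letter. That letter is either unbarred ($i+1\leftrightarrow i$) or barred ($\barindex{i}\leftrightarrow\barindex{i+1}$), and it stays barred or unbarred as it was. The modified word $\operatorname{rev}_{-}$ only reverses the barred block and does not move letters between the blocks. So the result of a crystal operator stays in the same class ${\mathcal W}_d^{-+}$ (or ${\mathcal W}_d^{+-}$), and its barred part has the same length. By connectivity, every element of ${\mathcal C}$ lies in ${\mathcal W}_{\ell,d}^{-+}$ (respectively ${\mathcal W}_{\ell,d}^{+-}$) for a single $\ell$, and (I) then gives injectivity of ordinification on ${\mathcal C}$.

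One technicality needs care. A word with no barred letters, or with no unbarred letters, lies in both ${\mathcal W}_d^{-+}$ and ${\mathcal W}_d^{+-}$. In that case the two readings of ${\bs w}^+$ must agree. When ${\bs n}$ is empty, both readings give ${\bs w}^+={\bs p}$. When ${\bs p}$ is empty, both give ${\bs w}^+=\operatorname{rev^c}({\bs n})$. So ordinification is well defined on these words. Moreover, we always work inside one chosen family, ${\mathcal W}_d^{-+}$ or ${\mathcal W}_d^{+-}$, so the argument is unaffected.

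The main point to verify is the block-decoding in (I). This relies on every $\barindex{j}^c$ having the same length $d-1$, which holds because every complement omits exactly one letter of $[d]$. With that in hand, the rest is bookkeeping.
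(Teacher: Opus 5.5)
Your proposal is correct and follows the paper's proof. For (I) you decode $\operatorname{rev^c}({\bs n})$ from ${\bs w}^+$ as blocks of length $d-1$, each missing exactly one letter, given $\ell$; for (II) you use that $\worde_i,\wordf_i$ preserve the number of barred letters, so (I) applies on each component. Your remark about words lying in both ${\mathcal W}_d^{-+}$ and ${\mathcal W}_d^{+-}$ is a harmless check that the paper leaves implicit.
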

\begin{proof}
(I): Since $\operatorname{rev^c}({\bs n})$ 
is a sequence of blocks of $d-1$ entries of the form $(d,d-1,\ldots,j+1,j-1,\ldots,1) = \barindex{j}^c$; replacing each $
\barindex{j}^c$ with $\barindex{j}$ and reversing the order recovers ${\bs n}$ and therefore also ${\bs w}$. 
Therefore, ${\bs w}$ can be reconstructed uniquely from the data of ${\bs w^+}$ and $\ell({\bs n})$. 

(II): The operators $\worde_i,\wordf_i$ preserve the number of barred entries, hence
 $\ell({\bs n})$ is independent of ${\bs w}={\bs n}{\bs p}\in {\mathcal C}$ or ${\bs w}={\bs p}{\bs n}\in {\mathcal C}$. Now apply
(I).
\end{proof}

\begin{proposition} \label{prop:ordinification-isomorphism}
Fix a connected component ${\mathcal C}\subseteq {\mathcal W}_{\ell,d}^{-+}$ or 
${\mathcal C}\subseteq {\mathcal W}_{\ell,d}^{+-}$. The image of ${\mathcal C}$ 
under ordinification is
a connected component ${\mathcal C}'\subseteq {\mathcal W}_d$ and
${\bs w}\mapsto {\bs w}^+$ is a shifted crystal isomorphism
${\mathcal C}\to {\mathcal C}'$ with shift
$H=(\ell,\ell,\ldots,\ell)\in{\mathbb Z}^d$.
 \end{proposition}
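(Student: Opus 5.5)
We plan to apply Proposition~\ref{prop:inj-implies-bij} with $\mathcal B'={\mathcal C}$ and with $\mathcal B$ the connected component ${\mathcal C}'$ of ${\mathcal W}_d$ containing ${\bs w}_0^+$ for some fixed ${\bs w}_0\in{\mathcal C}$, and $\psi({\bs w})={\bs w}^+$. By Theorem~\ref{thm:basicwordsarecrystal}, ${\mathcal W}_d$ is a crystal, so ${\mathcal C}'$ is a connected crystal. We must check three things. First, $\psi$ lands in ${\mathcal C}'$. Second, $\psi$ is injective. Third, $\psi$ satisfies \eqref{eq:shifted-crystal-morphism} with $H=(\ell,\ldots,\ell)$. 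Proposition~\ref{prop:inj-implies-bij} then gives that $\psi$ is a bijection onto ${\mathcal C}'$ and a shifted crystal isomorphism, so the image is exactly the component ${\mathcal C}'$.

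Injectivity is Lemma~\ref{lemma:May13aaa}(II). The intertwining of $\worde_i,\wordf_i$ is Lemma~\ref{lem-dual-words-crystals}, read with the convention $\o^+=\o$. The lemma asserts $\worde_i({\bs w})^+=\worde_i({\bs w}^+)$; we should confirm it also covers the case where one side is $\o$. It does: the parenthesis sequences differ only by inserted adjacent pairs {\tt ()}, and these are always matched. So the sets of unpaired brackets correspond bijectively, and one side has an unpaired {\tt (} exactly when the other does.

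To see that the image lies in a single component, take ${\bs w}\in{\mathcal C}$ and a path from ${\bs w}_0$ to ${\bs w}$ by crystal operators. Applying $\psi$ and the intertwining at each step gives a path from ${\bs w}_0^+$ to ${\bs w}^+$ in ${\mathcal W}_d$. Hence $\psi({\mathcal C})\subseteq{\mathcal C}'$.

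For the weight, we compute letter by letter. An unbarred letter keeps its contribution. A barred letter $\barindex{j}$ contributes $-\varepsilon_j$ to $\operatorname{wt}({\bs w})$. Its complement $\barindex{j}^c=d\cdots(j+1)(j-1)\cdots1$ contributes $(1,\ldots,1)-\varepsilon_j$. So each of the $\ell$ barred letters adds $(1,\ldots,1)$, and $\operatorname{wt}({\bs w}^+)=\operatorname{wt}({\bs w})+H$, which is the required relation $\operatorname{wt}'=\operatorname{wt}\circ\psi-H$. Finally, $\langle H,\alpha_i^\vee\rangle=\ell-\ell=0$ for $\alpha_i=\varepsilon_i-\varepsilon_{i+1}$.

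The main point needing care is the $\o$ case of the intertwining, since Proposition~\ref{prop:inj-implies-bij} needs \eqref{eq:shifted-crystal-morphism} as an identity of extended maps. We handle it through the bracket-matching observation above. Beyond that, the argument is bookkeeping. No crystal axioms need to be verified directly on ${\mathcal C}$: they come for free from Proposition~\ref{prop:inj-implies-bij}.
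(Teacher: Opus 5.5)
Your proposal is correct and matches the paper's proof: like the paper, you apply Proposition~\ref{prop:inj-implies-bij} to $\psi(\bs w)=\bs w^+$, mapping into the component of $\mathcal W_d$ that contains a fixed $\bs w_0^+$. Injectivity comes from Lemma~\ref{lemma:May13aaa}(II), the intertwining and well-definedness from Lemma~\ref{lem-dual-words-crystals}, and the shift $H=(\ell,\ldots,\ell)$ from the letter-by-letter weight count. Your extra remark, that the inserted adjacent \texttt{()} pairs are always matched and so the $\o$ cases correspond on both sides, makes explicit what the proof of that lemma already covers.
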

 \begin{proof}
Pick ${\bs w}\in {\mathcal C}$ and let ${\mathcal C}'$ be the connected component of 
${\mathcal W}_d$ containing ${\bs w}^+$. By Lemma~\ref{lem-dual-words-crystals} and
the connectivity of ${\mathcal C}$, ${\bs x}^+\in {\mathcal C}'$ for every ${\bs x}\in {\mathcal C}$.
Hence we have a well-defined map $\psi:{\mathcal C}\to{\mathcal C}'$ by ${\bs x}\mapsto {\bs x}^+$. 
Now we apply Proposition~\ref{prop:inj-implies-bij} to $\psi$.
The injectivity hypothesis holds by
Lemma~\ref{lemma:May13aaa}(II). The first two requirements of \eqref{eq:shifted-crystal-morphism}
are checked by Lemma~\ref{lem-dual-words-crystals}. The weight function condition of
\eqref{eq:shifted-crystal-morphism} holds since
replacing each of the $\ell$ barred letters, say $\barindex{j}$, in
${\bs n}$ by $\barindex{j}^c$, increases the weight by $(1,\ldots,1)$.
Thus the shift is $H=(\ell,\ldots,\ell)$.
Proposition~\ref{prop:inj-implies-bij} now implies that $\psi$ is a
shifted crystal isomorphism onto ${\mathcal C}'$, as desired.
\end{proof}

We need two properties of Stembridge insertion from \cite{Stembridge}. 

\begin{theorem}[\!\!{\cite{Stembridge}}] \label{thm:Steminsprops}
\
\begin{itemize}
\item[(I)] If $i$ is a barred or unbarred label and $k\in {\mathbb Z}$ then 
\[\operatorname{shift}^k((U\div V) \leftarrow i)=\operatorname{shift}^k(U\div V)\leftarrow i.\]
\item[(II)] If ${\bs n}$ is a sequence of barred labels and ${\bs p}$ is a sequence of unbarred labels, then
\[(\emptyset\leftarrow {\bs n}) \leftarrow {\bs p} = (\emptyset\leftarrow {\bs p})\leftarrow {\bs n}.\]
\end{itemize}
\end{theorem}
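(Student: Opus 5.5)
Part (I) is essentially a consequence of the well-definedness of \eqref{eqn:May29ddd} and \eqref{eqn:May29eee}. I would therefore first prove that these definitions are independent of $k\gg 0$, and then read off (I) formally.

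\textbf{Step 1 (shift commutes with ordinary insertion).} Let $T$ be a straight-shape semistandard tableau with entries in $[d]$. I claim $\operatorname{shift}(T\leftarrow i)=\operatorname{shift}(T)\leftarrow i$. The tableau $\operatorname{shift}(T)$ is $T$ with the column $1,2,\ldots,d$ prepended, so the prepended entry in row $j$ is $j$. Row insertion of $i\ge 1$ into row $1$ never bumps that leading $1$. Suppose a value $b_{j-1}$ is bumped out of row $j-1$. It is strictly larger than the value that bumped it, which is at least $j-1$ by column-strictness. Inductively $b_{j-1}\ge j$, so it never displaces the leading $j$ of row $j$. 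Since $T$ has at most $d$ rows and entries at most $d$, the bumping path stays in $T$ and ends within $d$ rows. Hence the path in $\operatorname{shift}(T)$ is the path in $T$ translated one column right, which proves the claim.

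For the barred base case $(\varnothing\div V)\leftarrow\barindex{i}=\varnothing\div(V\leftarrow i)$, the analogue is that $\operatorname{shift}^{-1}(\varnothing\div V)=\varnothing\div V^{\sharp}$, where $V^\sharp$ is $V$ with a full column $1,\ldots,d$ prepended. The same argument then gives $V^\sharp\leftarrow i=(V\leftarrow i)^\sharp$. Iterating shows that \eqref{eqn:May29ddd} and \eqref{eqn:May29eee} do not depend on $k\gg0$.

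\textbf{Step 2 (deduce (I)).} Take $m\in\mathbb Z$ and $k\gg0$. Then
\[\operatorname{shift}^m(\operatorname{shift}^m(U\div V)\leftarrow i)\]
is not the right object; instead compute $\operatorname{shift}^m(U\div V)\leftarrow i=\operatorname{shift}^{-k}(\operatorname{shift}^{k+m}(U\div V)\leftarrow i)$. By Step 1's independence of the exponent, this equals
\[\operatorname{shift}^{m}\operatorname{shift}^{-(k+m)}(\operatorname{shift}^{k+m}(U\div V)\leftarrow i)=\operatorname{shift}^m((U\div V)\leftarrow i).\]
The barred case is identical using \eqref{eqn:May29eee}.

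\textbf{Step 3 (plan for (II)).} First reduce to a single barred letter and a single unbarred letter: by induction on $\ell(\bs n)+\ell(\bs p)$ it suffices to show $(R\leftarrow\barindex{j})\leftarrow i=(R\leftarrow i)\leftarrow\barindex{j}$ for every rational tableau $R$. By (I), we may shift $R$ by a large power so that everything happens inside straight tableaux with many full columns.

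The key intermediate claim, which I expect to be the main obstacle, is to identify barred row insertion on such a tableau $T$ with an operation of the opposite handedness. Concretely, I would show that $T\leftarrow\barindex{j}$ equals $\operatorname{shift}^{-1}$ of the column insertion of the word $\barindex{j}^c$ (the complement word of \S\ref{sec:ordinification}) into $T$ from the left. The weight shift by $(1,\ldots,1)$ matches Proposition~\ref{prop:ordinification-isomorphism}, which is evidence for this form. To prove the claim I would check it first for $T=\operatorname{shift}^{k}(\varnothing\div V)$, where it should reduce to the plactic identity between the row reading of $V$ with a full column and its complement. I would then propagate it using (I).

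Granting this claim, (II) reduces to the classical fact that row insertion on the right commutes with column insertion on the left \cite[\S A.2]{Fulton-book}. That fact is equivalent to $\ins_r=\ins_c$ on words. The claim also needs to commute with the final $\operatorname{shift}^{-1}$, which Step 1 provides.
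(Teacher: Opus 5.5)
Your proof of (I) is correct, and it takes a different route from the paper's. The paper takes the case $k=1$ from the proof of \cite[Theorem~3.6]{Stembridge}, iterates, and then uses (I) to justify that \eqref{eqn:May29ddd}--\eqref{eqn:May29eee} do not depend on $k$. You reverse the logic. Since insertion is \emph{defined} here by shifting, (I) follows formally from $k$-independence. That independence in turn reduces to the elementary fact that a prepended full column $1,\dots,d$ never lies on a row-bumping path, both for $T$ and for $V^\sharp$. This makes (I) self-contained.

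For (II) there is a genuine gap. The reductions themselves are sound.
\begin{itemize}
\item The one-letter commutation $(R\leftarrow\barindex{j})\leftarrow i=(R\leftarrow i)\leftarrow\barindex{j}$ gives (II) by adjacent transpositions.
\item Your key claim $T\leftarrow\barindex{j}=\operatorname{shift}^{-1}(\barindex{j}^c\rightarrow T)$, together with (I) and the classical identity $(w\rightarrow T)\leftarrow i=w\rightarrow(T\leftarrow i)$, gives that commutation.
\end{itemize}
But the key claim is only asserted, and it carries essentially all of the content of \cite[Theorem~5.5]{Stembridge}. Take $R=\varnothing\leftarrow\bs{n}$ with $\bs{n}$ barred. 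Induction on $\ell(\bs{n})$, using the column-insertion analogue of (I), shows that your claim for such $R$ is equivalent to $\operatorname{shift}^{\ell(\bs{n})}(\varnothing\leftarrow\bs{n})=\ins_r(\operatorname{rev^c}(\bs{n}))$. That is exactly the case $\bs{p}=\varnothing$ of Lemma~\ref{lem:duality-shift-equiv}, which the paper proves \emph{using} (II). So you cannot borrow it. Your phrase ``reduce to the plactic identity between the row reading of $V$ with a full column and its complement'' does not yet name an argument, and the weight comparison is only a consistency check. As written, (II) is unproved; the paper itself just cites \cite[Theorem~5.5]{Stembridge}.

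The claim is true, and here is one way to prove it without (II).
\begin{enumerate}
\item Apply \eqref{eqn:June21aaa}, which needs only (I) and Stroomer's relation \eqref{eq:row-col-insertion-relation}, to the unbarred word $\barindex{j}^c$. This gives $\barindex{j}^c\rightarrow R=\operatorname{shift}^{d-1}(R\leftarrow 1^c2^c\cdots\widehat{j^c}\cdots d^c)$.
\item Shift $R$ to the form $\varnothing\div V$. Then this barred insertion is ordinary insertion of $\barindex{1}^c\cdots\widehat{\barindex{j}^c}\cdots\barindex{d}^c$ into $V$.
\item Set $\delta=d(d-1)\cdots 1$. One has the plactic identity $\barindex{1}^c\cdots\widehat{\barindex{j}^c}\cdots\barindex{d}^c\equiv\delta^{d-2}\,j$. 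It follows by induction from $C\,\barindex{a}^c\equiv\delta\,C'$, where $C$ is the decreasing word of a set $S\supseteq[a,d]$ and $C'$ is that of $S\setminus\{a\}$. This last identity follows from Schensted's theorem: the longest strictly decreasing subsequence has length $d$, the longest weakly increasing one has length at most $2$, and then the weight determines the second column.
\item The word $\delta$ is central, and left multiplication by $\delta$ prepends a full column, i.e.\ acts as $\operatorname{shift}^{-1}$ on $\varnothing\div V$. Hence $R\leftarrow 1^c\cdots\widehat{j^c}\cdots d^c=\operatorname{shift}^{-(d-2)}(R\leftarrow\barindex{j})$.
\end{enumerate}
Combining steps 1 and 4 gives $R\leftarrow\barindex{j}=\operatorname{shift}^{-1}(\barindex{j}^c\rightarrow R)$, which is your key claim. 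With that in hand, the rest of your Step~3 goes through.
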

\begin{proof}
If $k=0$ then (I) is trivial. If $k=1$ then (I) follows from the proof of
\cite[Theorem 3.6]{Stembridge}, specifically its first displayed equation.
The case $k=-1$ follows by applying the $k=1$ identity to
$\operatorname{shift}^{-1}(U\div V)$ and then applying
$\operatorname{shift}^{-1}$ to both sides. All other values of $k$
follow by iteration. Part (II) is \cite[Theorem 5.5]{Stembridge}.
\end{proof}

\begin{lemma} \label{lem:duality-shift-equiv}
For all ${\bs w}={\bs n}{\bs p}\in{\mathcal W}_d^{-+}$,
\begin{equation}\label{eqn:Apr30xyz}
\ins_r({\bs w}) =
\operatorname{shift}^{-\ell({\bs n})}
(\ins_r({\bs w}^+)).
\end{equation}
\end{lemma}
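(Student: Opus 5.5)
Using Theorem~\ref{thm:Steminsprops}(II), I would first rewrite $\ins_r(\bs w)=(\varnothing\leftarrow\bs n)\leftarrow\bs p$ as $(\varnothing\leftarrow\bs p)\leftarrow\bs n$. Similarly, $\ins_r(\bs w^+)=(\varnothing\leftarrow\operatorname{rev^c}(\bs n))\leftarrow\bs p$. Because Theorem~\ref{thm:Steminsprops}(I) lets shifts commute with insertion of single letters, it suffices to prove the barred-only case
\[
\varnothing\leftarrow\bs n=\operatorname{shift}^{-\ell(\bs n)}\bigl(\ins_r(\operatorname{rev^c}(\bs n))\bigr).
\]
Indeed, granting this, repeated use of (I) gives
\[
\operatorname{shift}^{-\ell}\bigl((\varnothing\leftarrow\operatorname{rev^c}(\bs n))\leftarrow\bs p\bigr)=\bigl(\operatorname{shift}^{-\ell}(\varnothing\leftarrow\operatorname{rev^c}(\bs n))\bigr)\leftarrow\bs p=(\varnothing\leftarrow\bs n)\leftarrow\bs p ,
\]
and the last expression is $\ins_r(\bs w)$ by definition.

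I would prove the barred-only case by induction on $\ell=\ell(\bs n)$, with $\ell=0$ trivial. Write $\bs n=\bs n'\barindex{j}$. Then $\operatorname{rev^c}(\bs n)=\barindex{j}^c\operatorname{rev^c}(\bs n')$, so the complement block for the last letter is inserted \emph{first}. This reversal is the main obstacle: it does not match the order in which $\barindex{j}$ is inserted on the left. To handle it, I would use Knuth equivalence and switch to column insertion, where $\ins_r=\ins_c$ for ordinary words. Column insertion builds $\ins_c(\operatorname{rev^c}(\bs n))$ by inserting the blocks $\barindex{n_\ell}^c,\dots,\barindex{n_1}^c$ from right to left, so the last block processed is $\barindex{n_\ell}^c$, i.e. the one for $\barindex j$.

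The key single-step lemma I would isolate is this: for a straight-shape tableau $S$,
\[
\operatorname{shift}^{-1}\bigl(\ins_r(\barindex{j}^c\,\mathrm{word}(S))\bigr)=\operatorname{shift}^{-1}(S)\leftarrow\barindex{j}.
\]
Here $\barindex j^c$ is the strictly decreasing word $d\cdots\widehat{j}\cdots1$, i.e. the reading word of the column $[d]\setminus\{j\}$. The left side computes the rectification of that column placed before $S$. The right side is, by definition \eqref{eqn:May29eee} and Theorem~\ref{thm:Steminsprops}(I), the removal of a full column from $S$ followed by the Stembridge barred insertion.

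I would verify the lemma by reducing to the base case $(\varnothing\div V)\leftarrow\barindex{j}=\varnothing\div(V\leftarrow j)$, after shifting by a large $k$. The comparison then becomes the classical statement that the rational tableau $\operatorname{shift}^{-k}(T)$ has the same crystal and weight data as the complement-column construction. I would check this via weights and the crystal isomorphisms already available: Proposition~\ref{prop:tableau-shift-isomorphism}, Proposition~\ref{prop:ordinification-isomorphism}, and Theorem~\ref{thm:basicwordsarecrystal}. Both sides are then elements of the same connected crystal component, related by a shifted isomorphism with shift $(\ell,\dots,\ell)$, and they agree at the highest-weight element.

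To finish, I would argue that agreement on highest weights together with compatibility with $\worde_i$ and $\wordf_i$ forces equality everywhere. Compatibility holds for the right side by Lemma~\ref{lem-dual-words-crystals}, and for the left side once insertion is known to be a crystal morphism on barred words; the barred-only case $\varnothing\div(\cdot)$ follows from Theorem~\ref{thm:basicwordsarecrystal} applied to reversed words.

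The delicate point I expect is making this crystal argument non-circular, since Theorem~\ref{thm:stembridge-word-crystals} is itself to be deduced from this lemma. My fallback is a direct Knuth/jeu-de-taquin check that prefixing a complement column commutes with column removal under $\operatorname{shift}^{-1}$.
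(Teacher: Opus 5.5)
Your reduction to the barred-only case via Theorem~\ref{thm:Steminsprops}(I) is correct; it is exactly the final line of the paper's proof. Inducting by peeling off the last letter $\bs n=\bs n'\barindex{j}$ is also workable. The gap is the single-step lemma, which carries all the content.

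As displayed, that lemma is off by a shift. The induction needs $\operatorname{shift}^{-1}\bigl(\ins_r(\barindex{j}^c\,\mathrm{word}(S))\bigr)=S\leftarrow\barindex{j}$. Your version with $\operatorname{shift}^{-1}(S)\leftarrow\barindex{j}$ already fails at $S=\varnothing$. Since $\operatorname{shift}^{-1}(\varnothing)=\varnothing\div(\text{column }1,\dots,d)$, the right side has $d+1$ boxes, while the left side is $\varnothing\div V$ with $V$ the single box $j$.

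More seriously, the crystal argument you sketch is not available at this point. It requires $\bs n\mapsto\ins_r(\bs n)=\varnothing\div\ins_r(\text{unbarred }\bs n)$ to intertwine the rational operators on barred words with the shift-defined operators on $\mathcal B_{\varnothing\div\mu}$. That is the barred case of Theorem~\ref{thm:stembridge-word-crystals}, which the paper deduces \emph{from} this lemma. It also does not follow from Theorem~\ref{thm:basicwordsarecrystal} on reversed words alone: the barred $\worde_i$ is the ordinary $\wordf_i$ on the underlying word, so you would additionally need $\tabe_i(\varnothing\div V)=\varnothing\div\tabf_i(V)$ for the operators defined through $\operatorname{shift}^k$. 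Finally, agreement at the highest-weight word is asserted rather than computed, and the jeu-de-taquin fallback is unspecified.

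The missing idea is that Theorem~\ref{thm:Steminsprops}(II) dissolves the reversal obstacle; you invoke it only in a step you never use. Start from $\varnothing\leftarrow\barindex{j}=\operatorname{shift}^{-1}(\varnothing\leftarrow\barindex{j}^c)$, where the right side is $\operatorname{shift}^{-1}$ of the single column $[d]\setminus\{j\}$. Then for any unbarred word $\bs u$,
\[
\operatorname{shift}^{-1}\bigl(\ins_r(\barindex{j}^c\bs u)\bigr)\stackrel{\text{(I)}}{=}(\varnothing\leftarrow\barindex{j})\leftarrow\bs u\stackrel{\text{(II)}}{=}(\varnothing\leftarrow\bs u)\leftarrow\barindex{j}.
\]
Taking $\bs u=\operatorname{rev^c}(\bs n')$ gives the correct single-step statement, with no Knuth equivalence or column insertion needed. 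The induction hypothesis and one more use of (I) finish the proof.

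The paper runs the same trick from the other end. It peels off the first letter $\bs n=\barindex{i}\bs n'$ and trades $\barindex{i}$ for the unbarred block $\barindex{i}^c$ at the cost of one $\operatorname{shift}^{-1}$. It then uses (II) to slide that block past $\bs n'$ to the end, matching $\operatorname{rev^c}(\bs n)=\operatorname{rev^c}(\bs n')\,\barindex{i}^c$.
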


\begin{proof}
A straightforward check shows that
\begin{equation} \label{eq:single-insertion}
\emptyset\leftarrow\barindex{i} = \operatorname{shift}^{-1}(\emptyset\leftarrow \barindex{i}^c) = \varnothing \div \vcenter{\hbox{\begin{ytableau} i \end{ytableau}}}.
\end{equation}

To prove \eqref{eqn:Apr30xyz}, we induct on $k := \ell({\bs n})$. If $k=0$, then ${\bs n} = \emptyset$, and both sides of \eqref{eqn:Apr30xyz} equal $\emptyset\leftarrow {\bs p}$, so the base case holds. If $k>0$, write ${\bs n} = \barindex{i}{\bs n'}$. Now,
\begin{eqnarray}\nonumber
\emptyset\leftarrow{\bs n} = (\emptyset \leftarrow \barindex{i}) \leftarrow {\bs n'} 
& \stackrel{\eqref{eq:single-insertion}}{=} & \left(\operatorname{shift}^{-1}(\emptyset\leftarrow \barindex{i}^c)\right) \leftarrow {\bs n'}\\ \nonumber
&  \stackrel{\text{Theorem~\!\ref{thm:Steminsprops}(I)}}{=} & \operatorname{shift}^{-1}(\emptyset\leftarrow 
\barindex{i}^c \leftarrow {\bs n'})\\ \nonumber
 & \stackrel{\text{Theorem~\!\ref{thm:Steminsprops}(II)}}{=} & \operatorname{shift}^{-1}(\emptyset \leftarrow {\bs n'}\leftarrow \barindex{i}^c)\\ \nonumber
 & \stackrel{\text{ind.~hyp.}}{=} & \operatorname{shift}^{-1}(\operatorname{shift}^{-\ell({\bs n'})}
(\emptyset \leftarrow \operatorname{rev^c}({\bs n'}) \leftarrow \barindex{i}^c))\\ \nonumber
 &  \stackrel{\text{Theorem~\!\ref{thm:Steminsprops}(I)}}{=} & \operatorname{shift}^{-1-\ell({\bs n'})}
(\emptyset \leftarrow \operatorname{rev^c}({\bs n'})\leftarrow \barindex{i}^c)\\ \nonumber
& = & \operatorname{shift}^{-\ell({\bs n})}
(\emptyset \leftarrow \operatorname{rev^c}({\bs n})).\nonumber
\end{eqnarray}
Hence 
\[\!\emptyset\leftarrow{\bs n}{\bs p}= (\emptyset\leftarrow {\bs n})\leftarrow {\bs p}
=\operatorname{shift}^{-\ell({\bs n})}
(\emptyset \leftarrow \operatorname{rev^c}({\bs n})) \leftarrow{\bs p}
\stackrel{\text{Theorem~\!\ref{thm:Steminsprops}(I)}}{=}\operatorname{shift}^{-\ell({\bs n})}
(\ins_r({\bs w}^+)).\!\!\qedhere
\]
\end{proof}

\begin{lemma} \label{lem:col-ins-row-ins}
For all $\bs{w}={\bs p}{\bs n}\in {\mathcal W}_{d}^{+-}$,
\[
\ins_c(\bs{w}) = \operatorname{shift}^{-\ell(\bs{n})}(\ins_r(\bs{w}^+)).
\]
\end{lemma}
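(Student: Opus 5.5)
The plan is to mirror the proof of Lemma~\ref{lem:duality-shift-equiv}, but using column insertion from the right and induction on $k:=\ell(\bs n)$. Write $\bs w=\bs p\bs n$ with $\bs n=\barindex{n_1}\cdots\barindex{n_k}$, so that $\bs w^+=\bs p\,\barindex{n_k}^c\cdots\barindex{n_1}^c$. Since $\ins_c$ inserts the last letter first, I set $S=\ins_c(\bs n)=\barindex{n_1}\rightarrow(\cdots\rightarrow(\barindex{n_k}\rightarrow\varnothing))$ and have $\ins_c(\bs w)=p_1\rightarrow(\cdots(p_m\rightarrow S))$.

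The base of the argument is the single-letter identity
\[
\barindex{i}\rightarrow\varnothing=\varnothing\div\vcenter{\hbox{\begin{ytableau} i\end{ytableau}}}=\operatorname{shift}^{-1}\bigl(\ins_c(\barindex{i}^c)\bigr),
\]
since column inserting the decreasing word $d\cdots\widehat{i}\cdots 1$ from the right produces a single column $[d]\setminus\{i\}$. Next I need a column-insertion analogue of Theorem~\ref{thm:Steminsprops}(I), namely $\operatorname{shift}^k(a\rightarrow T)=a\rightarrow\operatorname{shift}^k(T)$ for barred or unbarred $a$. This is essentially built into definitions \eqref{eqn:May26aaa} and \eqref{eqn:May26bbb}: I would verify the independence of $k\gg0$ and reduce to $k=\pm1$, exactly as in the proof of Theorem~\ref{thm:Steminsprops}(I), citing Stroomer's analogous statement \cite{Stroomer}.

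The key step is to avoid needing an analogue of Theorem~\ref{thm:Steminsprops}(II), which would say that barred and unbarred column insertions commute. I would instead use that ordinary row and column insertion agree on unbarred words (\cite[\S A.2]{Fulton-book}), so $\ins_r(\bs w^+)=\ins_c(\bs w^+)$. The claim then reduces to
\[
\ins_c(\bs p\bs n)=\operatorname{shift}^{-k}\bigl(\ins_c(\bs p\,\operatorname{rev^c}(\bs n))\bigr),
\]
and by the shift-equivariance above this follows from $\ins_c(\bs n)=\operatorname{shift}^{-k}\bigl(\ins_c(\operatorname{rev^c}(\bs n))\bigr)$, after pushing the letters of $\bs p$ through the shift.

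For this last identity I induct on $k$. Write $\bs n=\barindex{i}\bs n'$, so that $\ins_c(\bs n)=\barindex{i}\rightarrow\ins_c(\bs n')=\barindex{i}\rightarrow\operatorname{shift}^{-(k-1)}\ins_c(\operatorname{rev^c}\bs n')$. The right-hand side instead inserts $\ins_c(\barindex{i}^c\,\operatorname{rev^c}(\bs n'))$, with $\barindex{i}^c$ inserted last. So I must show that column inserting $\barindex{i}$ into a tableau $T$ equals $\operatorname{shift}^{-1}$ of column inserting the unbarred word $\barindex{i}^c$ into $T$ (suitably shifted). By \eqref{eqn:May26bbb}, $\barindex{i}\rightarrow T=\operatorname{shift}^{k}(\barindex{i}\rightarrow\operatorname{shift}^{-k}T)$, so this becomes a statement about inserting a barred letter into a rational tableau with large $V$. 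Here I would use Stroomer's nonshifting description \cite[\S4]{Stroomer}, or better the known fact that, for straight $T$, $\barindex{i}\rightarrow\operatorname{shift}^{-1}(T)$ relates to column inserting the complementary column $\barindex{i}^c$ into $T$. In tableau terms, column insertion of a decreasing word $d\cdots\widehat{i}\cdots1$ from the right adds a vertical strip, and deleting a full column realizes the dual operation.

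The main obstacle I expect is this single-step compatibility, i.e. $\barindex{i}\rightarrow\operatorname{shift}^{-1}(T)=\operatorname{shift}^{-1}(\barindex{i}^c\rightarrow T)$ for straight $T$. My first approach would be to check it via Knuth/jeu de taquin equivalence. Column-inserting $\barindex{i}^c$ into $T$ gives the rectification of the word $\barindex{i}^c\cdot\operatorname{row}(T)$, and I would compare it with Stroomer's definition \eqref{eqn:May26bbb} unwound at the straight-shape level.

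If that direct check proves awkward, the fallback is to pass through row insertion. By the column/row symmetry in Stembridge's Theorem~\ref{thm:Steminsprops}(II), rational column insertion of a separated word should equal rational row insertion of the same word. Then Lemma~\ref{lem:duality-shift-equiv}, applied after reordering via Theorem~\ref{thm:Steminsprops}(II), gives the result immediately.
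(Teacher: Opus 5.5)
Your reduction is sound. Column insertion commutes with $\operatorname{shift}$ (this is the $k$-independence in \eqref{eqn:May26aaa}--\eqref{eqn:May26bbb}), and $\ins_r(\bs w^+)=\ins_c(\bs w^+)$ because $\bs w^+$ is unbarred. Pushing $\bs p$ through the shift then leaves only the purely barred identity $\ins_c(\bs n)=\operatorname{shift}^{-k}(\ins_c(\operatorname{rev^c}(\bs n)))$.

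The gap is in how you propose to prove that identity. First, $\operatorname{rev^c}(\barindex{i}\bs n')=\operatorname{rev^c}(\bs n')\,\barindex{i}^c$. So under $\ins_c$ the block $\barindex{i}^c$ is inserted \emph{first}, while $\barindex{i}$ is inserted last on the other side. Your inductive step is therefore a commutation statement, not a single-letter one.

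Second, the single-letter identity you isolate, $\barindex{i}\rightarrow\operatorname{shift}^{-1}(T)=\operatorname{shift}^{-1}(\barindex{i}^c\rightarrow T)$, is false. The two sides have weights $\operatorname{wt}(T)-(1,\ldots,1)-\varepsilon_i$ and $\operatorname{wt}(T)-\varepsilon_i$, and no renormalization of the shift saves it. Take $d=2$, $i=1$, $T=\boxed{1}$. The left side is $\barindex{1}\rightarrow(\varnothing\div\boxed{2})=\varnothing\div\boxed{1}\boxed{2}$, of shape $\varnothing\div(2)$. On the right, $\barindex{1}^c\rightarrow T=2\rightarrow\boxed{1}$ is a full column, and every shift of it consists of full columns.

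The correct letter-level relation, which the paper takes from Stroomer and the swap $\omega$, is $\barindex{i}\rightarrow(U\div V)=\operatorname{shift}^{-1}((U\div V)\leftarrow\barindex{i}^c)$. That is, column insertion of $\barindex{i}$ matches \emph{row} insertion of $\barindex{i}^c$. The paper iterates this, reorders with Theorem~\ref{thm:Steminsprops}(II), and converts back.

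Your fallback also fails. Rational row and column insertion do not agree on separated words: the paper's example $1\barindex{1}$ has $\ins_r(1\barindex{1})=\varnothing$ but $\ins_c(1\barindex{1})=\boxed{2}\div\boxed{2}$. Moreover, Theorem~\ref{thm:Steminsprops}(II) only reorders row insertions.

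What you are missing is that you need row/column agreement only for the purely barred word $\bs n$, and there it does hold. By the base definitions, both $\ins_r(\bs n)$ and $\ins_c(\bs n)$ equal $\varnothing\div P$, where $P$ is the classical RSK tableau of $n_1\cdots n_k$. The case $\bs p=\varnothing$ of Lemma~\ref{lem:duality-shift-equiv} (the first chain of equalities in its proof) then gives $\ins_c(\bs n)=\ins_r(\bs n)=\operatorname{shift}^{-k}(\ins_r(\operatorname{rev^c}(\bs n)))=\operatorname{shift}^{-k}(\ins_c(\operatorname{rev^c}(\bs n)))$. The last step uses classical RSK again on the unbarred word $\operatorname{rev^c}(\bs n)$. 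With this observation your reduction closes the proof without Stroomer's Lemma~6.3. It still uses Theorem~\ref{thm:Steminsprops}(II), indirectly through Lemma~\ref{lem:duality-shift-equiv}, and is shorter than the paper's route.
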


\begin{proof}
By \cite[Lemma~6.3]{Stroomer}, 
\begin{equation} \label{eq:row-col-insertion-relation}
i\rightarrow (U\div V) = \operatorname{shift}((U\div V)\leftarrow i^c).
\end{equation}
Ordinary and dual insertion are related by the swap map $\omega(U\div V) = V\div U$ via
\[
U\div V\leftarrow \barindex{i} = \omega(\omega(U\div V)\leftarrow i),
\qquad \barindex{i}\rightarrow U\div V = \omega(i\rightarrow \omega(U\div V)),
\]
and applying this to~\eqref{eq:row-col-insertion-relation} gives
\[
\barindex{i}\rightarrow (U\div V) = \operatorname{shift}^{-1}((U\div V)\leftarrow \barindex{i}^c).
\]
Iterating the combined use of these two facts and Theorem~\ref{thm:Steminsprops}(I) gives
\begin{equation}
\label{eqn:June21aaa}
w_1 w_2 \cdots w_\ell \rightarrow (U\div V) = \operatorname{shift}^k((U\div V)\leftarrow w_\ell^c\cdots w_1^c),
\end{equation}
where $k=\#\{i:w_i\in [d]\}-\#\{j: w_j \in [\barindex{d}]\}$ (cf.~\cite[Lemma~6.4]{Stroomer}). 

One checks that
$\emptyset\leftarrow i = \operatorname{shift}(\emptyset\leftarrow {i}^c) = \vcenter{\hbox{\begin{ytableau} i \end{ytableau}}}
\div\varnothing$ (cf.~\eqref{eq:single-insertion}), and using a similar argument as in the sequence of equalities in
Lemma~\ref{lem:duality-shift-equiv}, one derives
\begin{equation}
\label{eqn:June21ggg}
\emptyset\leftarrow {\bf p} = \operatorname{shift}^{\ell({\bs p})}(\emptyset\leftarrow \operatorname{rev}^c({\bs p})).
\end{equation}

Now we have
\begin{eqnarray}\nonumber
\bs{p}\bs{n}\rightarrow \varnothing & \stackrel{\eqref{eqn:June21aaa}}{=} &  \operatorname{shift}^{\ell(\bs{p})-\ell(\bs{n})}(\emptyset\leftarrow \operatorname{rev^c}(\bs{n}) \operatorname{rev^c}(\bs{p}))  \\ \nonumber
& \stackrel{\text{Theorem~\ref{thm:Steminsprops}(II)}}{=} & \operatorname{shift}^{\ell(\bs{p})-\ell(\bs{n})}(\emptyset\leftarrow \operatorname{rev^c}(\bs{p}) \operatorname{rev^c}(\bs{n}))\\ \nonumber
& \stackrel{\text{\eqref{eqn:June21ggg}, Theorem~\ref{thm:Steminsprops}(I)}}{=} & \operatorname{shift}^{-\ell(\bs{n})}(\emptyset\leftarrow \bs{p} \operatorname{rev^c}(\bs{n})). \nonumber\end{eqnarray}
Since ${\bs w}^+= {\bs p}\operatorname{rev^c}(\bs{n})\in {\mathcal W}_d$, $\varnothing\leftarrow {\bs w}^+=\operatorname{ins}_r({\bs w}^+)$,
and the lemma follows. \end{proof}

\subsection{Proof of Theorem~\ref{thm:stembridge-word-crystals}}\label{sec:proof32}
(I) Let ${\bs w}={\bs n}{\bs p}\in {\mathcal W}_d^{-+}$. Consider the map composition 
\[
{\bs w} \mapsto {\bs w}^+ \mapsto \ins_r({\bs w}^+) \mapsto \operatorname{shift}^{-\ell({\bs n})}(\ins_r({\bs w}^+)).
\]
By Proposition~\ref{prop:ordinification-isomorphism}, the first map is a shifted crystal isomorphism from 
$\mathcal{C}$ to a  connected component $\mathcal{C}'$ of ${\mathcal W}_d$,
with shift $(\ell,\ldots,\ell)$ where $\ell=\ell({\bs n})$.
RSK insertion, the second map, provides the isomorphism from 
$\mathcal{C}'$ to ${\mathcal B}_{\lambda}$, for some straight shape $\lambda$. 
Proposition~\ref{prop:tableau-shift-isomorphism} shows that 
the third map is a shifted crystal isomorphism from ${\mathcal B}_{\lambda}$ to
${\mathcal B}_{\operatorname{shift}^{-\ell({\bs n})}(\lambda)}$, with shift 
$-(\ell,\ldots,\ell)$. So, their composition is a shifted crystal isomorphism 
with shift $(\ell,\ldots,\ell) - (\ell,\ldots,\ell) = 0$, i.e., a crystal isomorphism. 
Finally,  by Lemma~\ref{lem:duality-shift-equiv},
$\operatorname{shift}^{-\ell({\bs n})}(\ins_r({\bs w}^+)) 
= \ins_r({\bs w})$.
Hence, in fact $\operatorname{shift}^{-\ell({\bs n})}(\lambda)=\gamma$, as desired.

(II): The proof is the same for $\bs{w}\in {\mathcal W}_d^{+-}$, 
except we apply Lemma~\ref{lem:col-ins-row-ins}.\qed

\subsection{Crystals of rational tableaux for profiles} \label{subsec:Cryprofiles}

Consider crystals $\mathcal{B}_1,\ldots,\mathcal{B}_k$ corresponding to weight lattices $\Lambda_1,\ldots,\Lambda_k$ and root systems $\Phi_1,\ldots,\Phi_k$ 
of ranks $a_1,\ldots,a_k$ and with index sets $I_1,\ldots,I_k$. Let 
\[\mathcal{B} = \mathcal{B}_1\times \cdots\times \mathcal{B}_k.\] 
For $1\le j\le k$ and $1\le i\le a_j$, define operators
$e_i^{(j)},f_i^{(j)}$ that act on $\mathcal{B}$ via the $j$th factor; i.e., if
$t_i^{(j)}$ is one of these operators, then
\[
t_i^{(j)} (b_1,\ldots, b_{j-1}, b_j, b_{j+1}, \ldots, b_k) := (b_1,\ldots, b_{j-1}, t_i(b_j), b_{j+1}, \ldots, b_k).
\]
Define the weight function
\begin{equation}
\label{eqn:July18aaa}
\operatorname{wt}_{{\mathcal B}}(b_1,\ldots,b_k)=(\operatorname{wt}_{{\mathcal B}_j}(b_j))_{j=1}^k.
\end{equation}
This datum makes $\mathcal{B}$ a crystal for the weight lattice $\Lambda_1\times \cdots\times \Lambda_k$ and root system $\Phi_1\sqcup \cdots\sqcup \Phi_k$; it is the \emph{Cartesian product crystal}. When $\mathcal{B}_j$ is the crystal of a representation $V_j$ of the group $G_j$, then $\mathcal{B}$ is the crystal for the exterior tensor product representation $V_1\boxtimes \cdots \boxtimes V_k$ of the direct product $G_1\times\cdots\times G_k$.

If ${\bf \Lambda} = (\gamma_1, \gamma_2,\ldots,\gamma_n)$ is a sequence of rational shapes, the Cartesian product crystal
\[
\mathcal{B}_{\bf \Lambda} := \mathcal{B}^{(d_1)}_{\gamma_1}\times\cdots\times \mathcal{B}^{(d_n)}_{\gamma_n},
\]
is a $GL(\d)$-crystal for the rational representation with highest weight ${\bf \Lambda}$. The elements of this crystal are the sequences of rational tableaux $(P_1,P_2,\ldots,P_n)$, 
whose shapes are ${\bf \Lambda}$. $\mathcal{B}^{(d_j)}_{\gamma_j}$ has rank $d_j-1$, so $\mathcal{B}_{\bf \Lambda}$ has crystal operators $\tabe_i^{(j)}, \tabf_i^{(j)}, 1\leq j\leq n, 1\leq i < d_j$.

Similarly, if $\mathcal{C}_i$ is a connected component of 
${\mathcal W}_{d_i}^{\operatorname{sep}}$ then we form the Cartesian product crystal $\mathcal{C}_1\times\cdots\times\mathcal{C}_n$, and these are the connected components of the product crystal 
\[{\mathcal W}_{\d}^{\operatorname{sep}} 
:= {\mathcal W}_{d_1}^{\operatorname{sep}}\times\cdots\times {\mathcal W}_{d_n}^{\operatorname{sep}}.\] 

In light of Theorem~\ref{thm:stembridge-word-crystals}, if $\bs{w}\in {\mathcal W}^{\operatorname{sep}}$ define
\begin{equation}\label{eqn:July23aaa}
\ins(\bs{w}) := \begin{cases}
\ins_r(\bs{w}), &\bs{w}\in {\mathcal W}_d^{-+}, \\
\ins_c(\bs{w}), &\bs{w}\in {\mathcal W}_d^{+-}. \\
\end{cases}
\end{equation}
This definition is potentially ambiguous if $\bs{w}\in \mathcal{W}_d\cup\mathcal{W}_d^-$; however, in this case $\ins_r(\bs{w}) = \ins_c(\bs{w})$ (see e.g.~\cite[\S A.2]{Fulton-book} for the case where $\bs{w}\in \mathcal{W}_d$; the case where $\bs{w}\in \mathcal{W}_d^-$ is analogous).

The \emph{profile} of $({\bs w}^{(1)}, \ldots, {\bs w}^{(n)})\in
{\mathcal W}_{\d}^{\operatorname{sep}}$ is the sequence of
rational shapes ${\bf \Lambda} = (\gamma_1, \gamma_2,\ldots,\gamma_n)$, where $\gamma_i$ is the shape of $\ins({\bs w}^{(i)})$.

Theorem~\ref{thm:stembridge-word-crystals} immediately gives:
\begin{corollary} \label{cor:cart-prod-crystals} \ 
The map
\[
\ins: ({\bs w}^{(1)}, \ldots, {\bs w}^{(n)}) \mapsto (\ins({\bs w}^{(1)}), \ldots,\ins({\bs w}^{(n)}))
\]
on $\mathcal{W}_{\d}^{\operatorname{sep}}$ restricts to an isomorphism 
$\mathcal{C}_1\times\cdots\times\mathcal{C}_n\to \mathcal{B}_{\bf \Lambda}$, 
where ${\bf \Lambda} $ is the profile of $({\bs w}^{(1)}, \ldots, {\bs w}^{(n)})$.
\end{corollary}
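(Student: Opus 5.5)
This follows by applying Theorem~\ref{thm:stembridge-word-crystals} in each factor separately and then assembling the factorwise isomorphisms with the Cartesian product construction of \S\ref{subsec:Cryprofiles}.

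First, fix $({\bs w}^{(1)},\ldots,{\bs w}^{(n)})\in\mathcal C_1\times\cdots\times\mathcal C_n$. Each $\mathcal C_j$ is a connected component of ${\mathcal W}^{\operatorname{sep}}_{d_j}$, so it lies entirely in ${\mathcal W}^{-+}_{d_j}$ or entirely in ${\mathcal W}^{+-}_{d_j}$. The crystal operators preserve which of the two forms a word has, because they never move a barred letter past an unbarred one.

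There is one subtlety, and it is where I expect the only real care is needed. A component might contain a word in ${\mathcal W}_{d_j}\cup{\mathcal W}^-_{d_j}$, which is separated in both senses. In that case the two crystal structures on ${\mathcal W}^{-+}$ and ${\mathcal W}^{+-}$ coincide, since $\operatorname{rev}_-$ gives the same modified word. Moreover, $\ins_r=\ins_c$ there, as noted after \eqref{eqn:July23aaa}. So the map $\ins$ is unambiguous on such a component, and the component is a component of either crystal.

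Next, by Theorem~\ref{thm:stembridge-word-crystals}(I) or (II), $\ins:\mathcal C_j\to\mathcal B^{(d_j)}_{\gamma_j}$ is a crystal isomorphism, where $\gamma_j$ is the common shape of $\ins({\bs w})$ for ${\bs w}\in\mathcal C_j$. This shape is exactly the $j$th entry of the profile, so the profile is constant on $\mathcal C_1\times\cdots\times\mathcal C_n$ and equals ${\bf\Lambda}=(\gamma_1,\ldots,\gamma_n)$.

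Finally, a product of bijections is a bijection. The operators $t^{(j)}_i$ act only in the $j$th factor, so the intertwining relations $t^{(j)}_i\circ\ins=\ins\circ t^{(j)}_i$ reduce to the factorwise relations. Here one checks that if the $j$th factor yields $\o$, then both sides are $\o$. The weight map \eqref{eqn:July18aaa} is componentwise, so weights are preserved as well. Hence $\ins$ is an isomorphism $\mathcal C_1\times\cdots\times\mathcal C_n\to\mathcal B_{\bf\Lambda}$.

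Alternatively, one can note that $\ins$ is injective and intertwines the operators, and then invoke Proposition~\ref{prop:inj-implies-bij} with $H=0$. This uses that $\mathcal B_{\bf\Lambda}$ is connected, being a product of connected crystals of irreducibles.
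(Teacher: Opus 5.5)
Your proposal is correct and follows the same route as the paper, which obtains the corollary directly from Theorem~\ref{thm:stembridge-word-crystals}. Like the paper, you apply that theorem in each factor and then use the factorwise operators and weights of the Cartesian product crystal. Your remarks on components of purely barred or purely unbarred words, where the two crystal structures coincide and $\ins_r=\ins_c$, spell out the well-definedness point the paper notes after \eqref{eqn:July23aaa}.
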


\subsection{Crystals and Knuth equivalence}
Suppose that ${\bs w},{\bs w}' \in {\mathcal W}_{\ell,d}^{-+}$ or ${\bs w},{\bs w}' \in {\mathcal W}_{\ell,d}^{+-}$. We say that 
${\bs w}$ and ${\bs w}'$ are \emph{Knuth equivalent} and write $\bs{w}\sim\bs{w}'$ if $\ins({\bs w}) = \ins({\bs w}')$. 
If ${\bs w},{\bs w}'\in {\mathcal W}_d$ this recovers the usual Knuth equivalence on words.
We want a crystal structure on Knuth equivalence classes. Let 
$\mathcal{K}_d^{\operatorname{sep}}=\{[\bs{w}]\}$ 
be the set of Knuth equivalence classes in $\mathcal{W}_d^{\operatorname{sep}}$, and 
\[\mathcal{K}_{\d}^{\operatorname{sep}} := \mathcal{K}_{d_1}^{\operatorname{sep}}\times\cdots\times 
\mathcal{K}_{d_n}^{\operatorname{sep}}=\{[{\bs w}^{(1)}, \ldots, {\bs w}^{(n)}]:=
([{\bs w}^{(1)}], \ldots, [{\bs w}^{(n)}])\}\]
be the set of Knuth equivalence classes in 
$\mathcal{W}_{\d}^{\operatorname{sep}}$. 

By definition, the insertion map and weight function on ${\mathcal W}_d^{\operatorname{sep}}$ and 
${\mathcal W}_{\bf d}^{\operatorname{sep}}$
descend to well-defined functions on ${\mathcal K}_d^{\operatorname{sep}}$ and 
${\mathcal K}_{\bf d}^{\operatorname{sep}}$. 
Respectively, define $\{\worde_i, \wordf_i\}$ and
$\{\worde_i^{(j)}, \wordf_i^{(j)}\}$ on $\mathcal{K}_d^{\operatorname{sep}}$ and 
$\mathcal{K}_{\d}^{\operatorname{sep}}$ by their action on a
choice of representative. Analogously define ${\mathcal K}_d^{-+}, {\mathcal K}_d^{+-}$, etc.

\begin{proposition}\label{prop:July24abc}
The crystal operators on $\mathcal{K}_d^{\operatorname{sep}}$ and $\mathcal{K}_{\d}^{\operatorname{sep}}$ are well-defined and satisfy Definition~\ref{def:crystal}. The insertion map
\[
\ins: ([{\bs w}^{(1)}, \ldots, {\bs w}^{(n)}]) \to (\ins([{\bs w}^{(1)}]), \ldots,
\ins([{\bs w}^{(n)}]))
\]
on $\mathcal{K}_{\d}^{\operatorname{sep}}$ is a crystal isomorphism on connected components $\mathcal{C}_1\times\cdots\times\mathcal{C}_n\to \mathcal{B}_{\bf \Lambda}$, where ${\bf \Lambda} $ is the profile of $({\bs w}^{(1)}, \ldots, {\bs w}^{(n)})$.
\end{proposition}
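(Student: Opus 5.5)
The plan is to reduce everything to Theorem~\ref{thm:stembridge-word-crystals} and Corollary~\ref{cor:cart-prod-crystals}, working one factor at a time; since all constructions on $\mathcal{K}_{\d}^{\operatorname{sep}}$ are Cartesian products, it suffices to treat $\mathcal{K}_d^{\operatorname{sep}}$ and then take products as in \S\ref{subsec:Cryprofiles}.

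\textbf{Step 1: the key claim.} We first show that if $\bs w\sim\bs w'$ in $\mathcal W_{\ell,d}^{-+}$ (or in $\mathcal W_{\ell,d}^{+-}$), then $\ins(\worde_i\bs w)=\ins(\worde_i\bs w')$ and $\ins(\wordf_i\bs w)=\ins(\wordf_i\bs w')$, where the value $\o$ on one side forces $\o$ on the other. To prove this, let $\mathcal C\ni\bs w$ and $\mathcal C'\ni\bs w'$ be the connected components. By Theorem~\ref{thm:stembridge-word-crystals}, $\ins|_{\mathcal C}$ and $\ins|_{\mathcal C'}$ are crystal isomorphisms onto $\mathcal B_\gamma$, where $\gamma$ is the common shape of $\ins(\bs w)=\ins(\bs w')$. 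Hence
\[
\ins(\worde_i\bs w)=\tabe_i(\ins\bs w)=\tabe_i(\ins\bs w')=\ins(\worde_i\bs w'),
\]
including the case where both sides are $\o$, and likewise for $\wordf_i$. This argument uses only that the operators preserve $\ell$, so the resulting class stays inside the same $\mathcal W^{-+}_{\ell,d}$ or $\mathcal W^{+-}_{\ell,d}$.

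\textbf{Step 2: well-definedness.} Step~1 is exactly the statement that $\worde_i,\wordf_i$ descend to well-defined operators on $\mathcal K_d^{\operatorname{sep}}$. The weight also descends, since $\wt$ is determined by $\ins$. The one delicate point is words lying in $\mathcal W_d\cup\mathcal W_d^-$, which belong to both $\mathcal W^{-+}_d$ and $\mathcal W^{+-}_d$. On these words the two bracket rules coincide, because $\operatorname{rev}_-$ treats a purely barred word identically in both cases. The two insertions also agree there, by the remark after \eqref{eqn:July23aaa}. So the Knuth class is unambiguous.

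\textbf{Step 3: crystal axioms and the isomorphism.} The map $\ins:\mathcal K_d^{\operatorname{sep}}\to\bigsqcup_\gamma\mathcal B_\gamma$ is injective by the definition of $\sim$, and it intertwines the operators and weights by Step~1. Fix a connected component $\mathcal D$ of $\mathcal K_d^{\operatorname{sep}}$. Its image under $\ins$ lies in the single connected crystal $\mathcal B_\gamma$, because the image of one component $\mathcal C$ already lands there and adjacent classes map to adjacent tableaux. We then apply Proposition~\ref{prop:inj-implies-bij} with $H=0$. This shows that $\mathcal D$ satisfies Definition~\ref{def:crystal} and that $\ins:\mathcal D\to\mathcal B_\gamma$ is a crystal isomorphism. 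Since the crystal axioms are local to components, $\mathcal K_d^{\operatorname{sep}}$ is a crystal.

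\textbf{Step 4: products.} Taking Cartesian products as in \S\ref{subsec:Cryprofiles} gives the statement for $\mathcal K_{\d}^{\operatorname{sep}}$, with $\mathcal{C}_1\times\cdots\times\mathcal{C}_n\to\mathcal B_{\bf\Lambda}$ where ${\bf\Lambda}$ is the profile.

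The main obstacle is the bookkeeping in Step~2 at the overlap $\mathcal W^{-+}\cap\mathcal W^{+-}$, together with keeping Knuth equivalence confined to a fixed $\ell$. The substantive content is supplied entirely by Theorem~\ref{thm:stembridge-word-crystals}.
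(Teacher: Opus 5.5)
Your argument follows the paper's proof. Well-definedness comes from $\ins(\worde_i\bs w)=\tabe_i(\ins\bs w)$ via Theorem~\ref{thm:stembridge-word-crystals}; then you use injectivity and Proposition~\ref{prop:inj-implies-bij} on each component, and finally take products.

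One claim in Step~3 is false as written. The map $\ins$ is not injective on all of $\mathcal K_d^{\operatorname{sep}}$, because $\sim$ only relates words of the same type with the same number $\ell$ of barred letters. Two examples show this:
\begin{enumerate}
\item $[\barindex{1}1]$ (with $\ell=1$) and the class of the empty word both insert to the empty tableau.
\item $\barindex{2}2\in\mathcal W^{-+}_{1,2}$ and $1\barindex{1}\in\mathcal W^{+-}_{1,2}$ have the same insertion tableau, namely the one computed in the example following Theorem~\ref{thm:stembridge-word-crystals}.
\end{enumerate}
What Proposition~\ref{prop:inj-implies-bij} actually needs is injectivity on a single component $\mathcal D$, and that does hold. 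The operators preserve the type and the number of barred letters, so $\mathcal D$ lies in one $\mathcal K^{-+}_{\ell,d}$ or $\mathcal K^{+-}_{\ell,d}$. There, equality of insertion tableaux is exactly $\sim$. This is how the paper phrases the step.

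There is a similar small gap in Step~2. The two insertions agreeing on a word of $\mathcal W_d\cup\mathcal W_d^-$ does not by itself show that its two Knuth classes (computed in $\mathcal W^{-+}_{\ell,d}$ and in $\mathcal W^{+-}_{\ell,d}$) coincide. The missing observation is that equal insertion tableaux force equal weights. With $\ell$ fixed, equal weights force the same number of unbarred letters. Hence every word Knuth equivalent to a purely barred word is itself purely barred, and on such words $\ins_r=\ins_c$.
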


\begin{proof}
We first establish well-definedness of the crystal operators on ${\mathcal K}_d^{\operatorname{sep}}$. 
By Theorem~\ref{thm:stembridge-word-crystals}, $\operatorname{ins}$
is a crystal isomorphism on each component of ${\mathcal W}_d^{-+}$ and
${\mathcal W}_d^{+-}$. Now, suppose ${\bs w}\sim {\bs w}'$; 
by definition $\operatorname{ins}({\bs w})=\operatorname{ins}({\bs w}')$. Combining these facts,
\begin{equation}
\label{eqn:July23bbb}
\operatorname{ins}(\worde_i({\bs w}))=\tabe_i(\operatorname{ins}({\bs w}))=\tabe_i(\operatorname{ins}({\bs w}'))
=\operatorname{ins}(\worde_i({\bs w}'))\implies \worde_i({\bs w})\sim \worde_i({\bs w}'),
\end{equation}
as desired, where we use the usual convention for $\o$. Similarly,
$\wordf_i({\bs w})\sim \wordf_i({\bs w}')$.

Now let ${\mathcal C}$ be a connected component of ${\mathcal K}_d^{-+}$ (the same argument will apply for
${\mathcal K}_d^{+-}$). Since $\{\worde_i,\wordf_i\}$ preserve the number
of barred letters, ${\mathcal C}\subseteq {\mathcal K}_{\ell,d}^{-+}$ for some $\ell$. Let $\gamma$ be the shape
of $\operatorname{ins}({\bs w})$ for some $[{\bs w}]\in {\mathcal C}$. By \eqref{eqn:July23bbb}, and its $\wordf_i$-analogue,
one sees this shape is independent of the
choice of $[{\bs w}]\in {\mathcal C}$, and one has a well-defined map
\[\operatorname{ins}:{\mathcal C}\longrightarrow {\mathcal B}_\gamma\]
commuting with the crystal operators and preserving weight.
This map is injective: if
$\operatorname{ins}([{\bs w}])=\operatorname{ins}([{\bs w}'])$, then
${\bs w}\sim{\bs w}'$ by definition, and hence
$[{\bs w}]=[{\bs w}']$. We now apply 
Proposition~\ref{prop:inj-implies-bij} to conclude that ${\mathcal C}$ is a crystal and that
$\operatorname{ins}:{\mathcal C}\to{\mathcal B}_\gamma$ is a crystal isomorphism.
The result for ${\mathcal K}_{\bf d}^{\operatorname{sep}}$ follows componentwise.
\end{proof}
\section{The quiver crystal graph}\label{sec:quivercrystalgraph}
\subsection{Words of a matrix}\label{sec:S4}

An integer matrix $M$ has a \emph{row word} 
${\operatorname{row}}(M)$ obtained by reading its columns top to bottom and left to right recording $M_{ij}$ many $i$'s. The \emph{barred column word} $\overline{{\operatorname{col}}}(M)$
is obtained by reading the rows in English order and recording $M_{ij}$ many $\overline j$'s. 

If $M$ is associated to edge $e$ of $Q$, its \emph{$i$-reading word} for an incident vertex $i$,
denoted ${\sf{read}}_i(M)$,
 is $\operatorname{row}(M)$ if $i$ is a tail of $e$ and 
$\overline{\operatorname{col}}(M)$ if it is the head of $e$. Define ${\sf{read}}_Q({\bf M})$
to be the $n$-length list whose $i$-th entry is ${\sf read}_i(M_{i-1}){\sf read}_i(M_i)$
if $i$ is not a sink and ${\sf read}_i(M_{i}){\sf read}_i(M_{i-1})$ otherwise. By construction,
the $i$-th element of the reading word $\operatorname{read}_Q({\bf M})$
from \S~\ref{sec:mainresult} is the ordinification of the $i$-th element of ${\sf read}_Q({\bf M})$.

\subsection{The quiver crystal operators} \label{sec:ordbimoves}

We now define raising operators $\mate_i^{(j)}$ and lowering operators $\matf_i^{(j)}$ for each vertex $j$ of $Q$.
These act on ${\bf M}=(M_1,\ldots,M_{n-1})$ as follows.

First compute
\[
{\sf read}_Q({\bf M})
=
({\bs w}^{(1)},\ldots,{\bs w}^{(n)})
\in {\mathcal W}_{\bf d}^{\operatorname{sep}}.
\]
Define the weight of ${\bf M}$ by
\begin{equation}\label{eqn:Admwtdef}
\operatorname{wt}({\bf M}):=\operatorname{wt}\bigl({\sf read}_Q({\bf M})\bigr).
\end{equation}
\S~\ref{subsec:CryStembridge} defined operators
$\worde_i^{(j)}$ and $\wordf_i^{(j)}$ on ${\mathcal W}_{\bf d}^{\operatorname{sep}}$.
The affected entry of ${\bs w}^{(j)}$ determines a unique position $(r,c)$ in either
$M_{j-1}$ or $M_j$; denote this matrix by $M$.

If $\worde_i^{(j)}$ changes $i+1\to i$, define $\mate_i^{(j)}({\bf M})$
by
\[
M_{r,c}\mapsto M_{r,c}-1,
\qquad
M_{r-1,c}\mapsto M_{r-1,c}+1.
\]
If instead it changes $\barindex{i}\to\barindex{i+1}$, define
$\mate_i^{(j)}({\bf M})$ by
\[
M_{r,c}\mapsto M_{r,c}-1,
\qquad
M_{r,c+1}\mapsto M_{r,c+1}+1.
\]
If $\worde_i^{(j)}({\bs w}^{(j)})=\o$, set
$\mate_i^{(j)}({\bf M})=\o$.

Similarly, if $\wordf_i^{(j)}$ changes $i\to i+1$, define
$\matf_i^{(j)}({\bf M})$ by
\[
M_{r,c}\mapsto M_{r,c}-1,
\qquad
M_{r+1,c}\mapsto M_{r+1,c}+1.
\]
If instead it changes $\barindex{i+1}\to\barindex{i}$, define
$\matf_i^{(j)}({\bf M})$ by
\[
M_{r,c}\mapsto M_{r,c}-1,
\qquad
M_{r,c-1}\mapsto M_{r,c-1}+1.
\]
If $\wordf_i^{(j)}({\bs w}^{(j)})=\o$, set
$\matf_i^{(j)}({\bf M})=\o$.

In the type $A_2$ case, these are the \emph{bicrystal operators} $e_i^{\operatorname{row}}, f_i^{\operatorname{row}},
e_i^{\operatorname{col}}, f_i^{\operatorname{col}}$ of
\cite{Danilov, van} (we follow the notation of \cite{AAA}).

\begin{theorem}\label{thm:S6.3}
If ${\bf M}\in\Std(\Omega)$ and
$\phi\in\{\mate_i^{(j)},\matf_i^{(j)}\}$, then $\phi({\bf M})=\o$ or $\phi({\bf M})\in \operatorname{Adm}(\Omega)$.
\end{theorem}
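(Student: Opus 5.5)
The plan is to reduce everything to a local exchange argument. The crystal operators never touch phantom regions, since they only modify entries of the $M_j$. So the condition that a filling vanish on phantom regions is automatically preserved, and the only thing to check is the antidiagonal condition for every $\mathcal R_{[a,b]}$. I treat $\matf$ through $\mate$: by Definition~\ref{def:crystal}(a), which holds on ${\mathcal W}^{\operatorname{sep}}_{\bf d}$ by Theorem~\ref{thm:stembridge-word-crystals}, $\matf_i^{(j)}({\bf M})={\bf M}'$ iff $\mate_i^{(j)}({\bf M}')={\bf M}$. The bracket rule for $\matf$ is the mirror image of the one for $\mate$ (rightmost unpaired {\tt )} instead of leftmost unpaired {\tt (}), so the argument below transposes directly. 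I then split into the two kinds of moves: a vertical move $(r,c)\to(r-1,c)$, where $j$ is the tail vertex and the letter is unbarred, and a horizontal move $(r,c)\to(r,c+1)$, where $j$ is the head vertex and the letter is barred. The second is the first read through $\overline{\operatorname{col}}$ and $\operatorname{rev}_-$ with rows and columns exchanged, so I describe only the vertical case.

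\emph{Vertical case.} Suppose $\mate_i^{(j)}({\bf M})$ contains an antidiagonal $A$ of length $r_{[a,b]}+1$ in some $\mathcal R_{[a,b]}$. Since ${\bf M}$ was admissible, $A$ must use the newly positive position $(r-1,c)$, which was zero in ${\bf M}$. Let $p$ be the predecessor of $(r-1,c)$ on $A$ (row $<r-1$, column $>c$) and $s$ its successor (row $>r-1$, column $<c$).
\begin{itemize}
\item If $s$ is not in row $r$, replace $(r-1,c)$ by $(r,c)$. That entry was positive in ${\bf M}$, so we get an equally long antidiagonal in ${\bf M}$, a contradiction.
\item If $s=(r,c')$ with $c'<c$, then $s$ contributes a letter $i+1$, i.e.\ a {\tt (}, which is read before the chosen {\tt (} at column $c$. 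Because the chosen {\tt (} is the leftmost unpaired one and pairing is nested, the {\tt (} at $s$ is matched to a {\tt )} strictly between. That {\tt )} is a positive entry $(r-1,c'')$ of ${\bf M}$ with $c'<c''<c$. Replacing $(r-1,c)$ by $(r-1,c'')$ in $A$ gives an antidiagonal of the same length in ${\bf M}$, since $c''$ is still less than the column of $p$ and greater than $c'$. This is again a contradiction.
\end{itemize}
The same exchange works for phantom antidiagonals, because the substituted positions are genuine positive entries.

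\emph{Main obstacle: the global geometry of the $Q$-shape.} The argument above quietly assumes two things. First, that "row $r$" in $\mathcal R_{[a,b]}$ means exactly the letters $i$ and $i+1$ of ${\sf read}_j$. Second, that the left-to-right order of positions in those two rows agrees with the order in which ${\sf read}_j$ reads them. Here ${\sf read}_j$ concatenates ${\sf read}_j(M_{j-1})$ and ${\sf read}_j(M_j)$, in an order that depends on whether $j$ is a sink, and it may pass through an adjacent phantom identity block whose rows or columns are also indexed by vertex $j$. I would verify from the zig-zag construction that, for every $j$, the rows or columns of the $Q$-shape indexed by $\C^{d_j}$ are precisely those of $M_{j-1}$, of $M_j$, and of any phantom block attached at $j$. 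I would also check that the reading order in ${\sf read}_Q$ (sink versus non-sink) matches the southwest-to-northeast placement.

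The phantom block is where care is needed. Its positions carry no letters but may sit on antidiagonals. So in the second bullet, $s$ could be a phantom position in row $r$ rather than a positive entry, and then the bracket argument gives nothing. I would handle this as follows.
\begin{itemize}
\item If the phantom block lies on the far side, so that an antidiagonal through it must already leave rows $r-1,r$ before reaching it, that case cannot occur.
\item Otherwise, use that the phantom block is a full identity. An antidiagonal can use at most one of its diagonal positions per row and column. I would then re-route $A$ through the phantom diagonal position in row $r-1$ instead of $(r-1,c)$, producing a phantom antidiagonal of the same length already present in ${\bf M}$.
\end{itemize}
Checking this re-routing in each orientation case (the two local rules of the $\widetilde Q$ construction) is the step I expect to require the most bookkeeping.
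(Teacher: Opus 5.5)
\textbf{There is a genuine gap: the phantom case you defer is the heart of the theorem, and your re-routing does not settle it.} Your two bullets are correct; they are exactly the opening of the paper's Subcase~1.2. The deferred case is when the successor $s$ of $(r-1,c)$ on $A$ is a phantom position in row $r$.

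Note first that every phantom position may lie on an antidiagonal, not only the $1$'s of the identity. The identity enters only through the ranks $r_{[p,q]}$.

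Substituting a phantom position of row $r-1$ for $(r-1,c)$ needs a phantom column strictly to the right of $s$. There is none when $s$ lies in the last phantom column. In that situation ${\bf M}$ need not contain any long antidiagonal, so no re-routing can yield a contradiction. For example:
\begin{itemize}
\item Take $1\to 2\to 3$ with ${\bf d}=(1,2,1)$, $L_1=(1\;0)$ and $L_2=(0\;1)^{T}$, so $r_{[1,3]}=2$.
\item Let $M_1=(1\;0)$ and $M_2=(0\;1)^{T}$. Then ${\bf M}\in\Std(\Omega)$.
\item Moving the unit of $M_2$ up would create a phantom antidiagonal of length $3$: entry $(1,1)$ of $M_2$, phantom position $(2,2)$, and entry $(1,1)$ of $M_1$.
\end{itemize}
This move is excluded only by the bracket rule. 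Here ${\bs w}^{(2)}=\barindex{1}\,2$ and both letters give {\tt (}. The leftmost unpaired one is the $\barindex{1}$ coming from $M_1$, so $\mate_1^{(2)}$ moves the unit of $M_1$ instead. Your argument never looks at the letters contributed by the other block adjacent to the phantom, so it cannot detect this.

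\textbf{The missing idea is the one your first ``quiet assumption'' glosses over.} At a vertex with an incident phantom block, the $i$-brackets of ${\sf read}_j$ come from rows $i,i+1$ of $M_j$ and also from columns $i,i+1$ of $M_{j-1}$, through the single word $\operatorname{rev}_{-}(\overline{\operatorname{col}}(M_{j-1}))\operatorname{row}(M_j)$. The phantom block shares its rows with $M_j$ and its columns with $M_{j-1}$, so antidiagonals pass from one block to the other.

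The paper handles this as follows, writing $a=(i+1,c)$ and $a'=(i,c)$ (so $r=i+1$). Let $k$ be the column of the first box $\ast$ of $D$ in $M_{j-1}$, with $k=0$ if there is none. The phantom boxes of $D$ lie in rows $>i$ and columns $>k$, so there are at most $d_j-\max(i,k)$ of them. They can be replaced by the antidiagonal of the southeast square of that size. Then:
\begin{itemize}
\item If $k<i$, delete $a'$; the phantom part gains a box.
\item If $k>i$, replace $a'$ by $a$.
\item If $k=i$, the $\barindex{i}$ at $\ast$ gives a {\tt (} preceding all of $\operatorname{row}(M_j)$. It must be paired, since the operator acted on $M_j$. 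Either it is paired by a $\barindex{i+1}$ in column $i+1$ of $M_{j-1}$ northeast of $\ast$: replace $a'$ by $a$ and insert that box, at the cost of one phantom box. Or it is paired by an $i$ at $(i,c_2)$ of $M_j$ with $c_2<c$: substitute that box for $a'$.
\end{itemize}
The transposed configuration, $\matf_i^{(j)}$ moving a unit left in $M_{j-1}$, needs the same argument.

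\textbf{Your reduction of $\matf$ to $\mate$ via axiom~(a) runs the wrong way.} Knowing $\mate_i^{(j)}({\bf M}')={\bf M}$ only transfers admissibility from ${\bf M}'$ to ${\bf M}$. What works is the symmetry of the $Q$-shape under transposition or $180^{\circ}$ rotation. It preserves antidiagonals and exchanges $\mate$ with $\matf$, and it also exchanges the two blocks adjacent to the phantom.
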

\begin{proof}
Call an antidiagonal of ${\mathcal R}_{[p,q]}$ of length greater than $r_{[p,q]}$
\emph{long}; we read antidiagonals from northeast to southwest. Since ${\bf M}$ and
$\phi({\bf M})$ differ only in that a single unit has moved from a box $a$ to an adjacent
box $a'$, if $\phi({\bf M})$ has a long antidiagonal $D$ then $a'\in D$ and
${\bf M}_{a'}=0$; otherwise $D$ is a long antidiagonal for ${\bf M}$. So in each case below it suffices to
modify $D$ into an antidiagonal of ${\bf M}$ in ${\mathcal R}_{[p,q]}$ of at least the same
length.

There are four local orientations at $j$. If $j$ is a source or a sink then no phantom
region is incident to $j$: here $M_{j-1}$ and $M_j$ occupy a single rectangle $N$ of the
$Q$-shape whose row word (respectively, barred column word) is ${\bs w}^{(j)}$, and $\phi$
moves a unit along a column (respectively, a row) of $N$. Subcase~1.1 and the first two
paragraphs of Subcase~1.2 below apply verbatim with $N$ in place of $M_j$ and ${\bs w}^{(j)}$ in place of the modified reading word displayed there, no phantom region being present;
for a sink one first transposes, as in Subcases~1.3 and~1.4. This
covers $j=1,n$, where $N$ is a single block. We may therefore assume that a phantom region
is incident to $j$, which leaves two cases.

\noindent\emph{Case 1:} $\left(
\begin{tikzpicture}[baseline=-0.5ex]
\node[circle,fill,inner sep=1pt,label=below:{\scriptsize $j-1$}] (a) at (0,0) {};
\node[circle,fill,inner sep=1pt,label=below:{\scriptsize $j$}]   (b) at (.9,0) {};
\node[circle,fill,inner sep=1pt,label=below:{\scriptsize $j+1$}] (c) at (1.8,0) {};
\draw[->] (a) -- node[above,font=\scriptsize] {$M_{j-1}$} (b);
\draw[->] (b) -- node[above,font=\scriptsize] {$M_j$} (c);
\end{tikzpicture}\right)$ Hence locally, the $Q$-shape of ${\bf M}$ looks like:
$\vcenter{\hbox{\begin{tikzpicture}[scale=1.1, baseline=-.5ex]
\draw (0,0) rectangle (.84,.84);
\node at (.42,.42) {\small{$M_{j-1}$}};
{\phantomtile{0}{.84}{3}{3}{}}
\draw (.84,.84) rectangle (1.68,1.68);
\node at (1.26,1.26) {\small{$M_j$}};
\draw[purple, -{Stealth[length=2mm,width=1.5mm]}, thick] (0,.84) -- (.84,.84);
\draw[purple, -{Stealth[length=2mm,width=1.5mm]}, thick] (.84,1.68) -- (.84,.84);
\draw[purple, -{Stealth[length=2mm,width=1.5mm]}, thick] (.84,1.68) -- (1.68,1.68);
\end{tikzpicture}}}$

\smallskip
\noindent\emph{Subcase 1.1:} ($\matf_i^{{(j)}}$ moves a value of $1$ down in $M_j$)
Suppose the value $1$ moves down from row $i$ in box $a=(i,c)$ of $M_j$ to box
$a'=(i+1,c)$, but ${\bf M}'=\matf_i^{(j)}({\bf M})$ contains a long antidiagonal $D$. The
box $d$ of $D$ immediately before $a'$ must lie in row $i$, and hence in $M_j$: otherwise
$d$ lies above row $i$, or $a'$ is the first box of $D$, and replacing $a'$ by $a$ gives a
long antidiagonal for ${\bf M}$. There are two possibilities. The first is that an unpaired
{\tt )} in the modified reading word
$\operatorname{rev}_{-}(\overline{\operatorname{col}}(M_{j-1})\operatorname{row}(M_j))$ is
associated with $d$. However, this means the ``$1$'' in $a$ does not correspond to the
rightmost unpaired {\tt )}, and hence $\matf_{i}^{(j)}$ does not move this $1$, a
contradiction. The second is that every {\tt )} associated to $d$ is paired. This means
there is a nonzero entry in a box $b$ in row $i+1$ that supplies the paired {\tt (}; the
assumption that the {\tt )} associated to $a$ is unpaired says that $b$ is in a column
strictly between those of $a$ and $d$. Then replacing $a'$ with $b$ gives a long
antidiagonal in ${\bf M}$, the final contradiction.

\smallskip
\noindent\emph{Subcase 1.2:} ($\mate_i^{(j)}$ moves a value of $1$ up in $M_j$)
We refer to the following diagram, where I, II and III denote the parts of the $Q$-shape
occupied by $M_{j-1}$, the phantom region and $M_j$:
\begin{figure}[h]
  \centering
  \begin{tikzpicture}[scale=1.1, baseline=0ex]
\tile{0}{0}{8}{5}{
  \fill[gray!150, opacity=.45] (.28,.28+.28) rectangle (.84,.28+.84);
  \numcell{2}{-1}{*}
  \numcell{1}{0}{\bullet}
}
{\phantomtile{0}{1.4}{8}{8}{
 \fill[gray!150, opacity=.45] (.84+.28,.28) rectangle (0.84+.56+.28+.28,.56+.28+.28);
\numcell{4}{1}{\bullet}
\numcell{5}{0}{\bullet}
\numcell{6}{-1}{\bullet}
}}
\tile{2.24}{1.4}{8}{8}{
\fill[gray!150, opacity=.45] (.84+.28+.28,.28+.84+.28) rectangle (0.84+.56+.28+.28+.28,.56+.28+.28+.84+.28);
  \numcell{3}{-1}{1}
  \numcell{5}{-3}{\bullet}
  \numcell{6}{-4}{\bullet}
  \numcell{7}{-5}{\bullet}
}
\draw[purple, -{Stealth[length=2mm,width=1.5mm]}, thick] (0,1.4) -- (2.24,1.4);
\draw[purple, -{Stealth[length=2mm,width=1.5mm]}, thick] (2.24,1.4+2.24) -- (2.24,1.4);
\draw[purple, -{Stealth[length=2mm,width=1.5mm]}, thick] (2.24,1.4+2.24) -- (4.48,1.4+2.24);
\node at (0.15,0.82) {I};
\node at (.92,2.1){II};
\node at (3.4,3.2){III};
\node at (5.2,2.4){\scriptsize row $i+1$};
\node at (-.5,2){{\bf M}=};
\end{tikzpicture}
\end{figure}

Suppose the value $1$ moves up from $a=(i+1,c)$ to $a'=(i,c)$, and let $D$ be a long
antidiagonal of ${\bf M}'=\mate_i^{(j)}({\bf M})$ in ${\mathcal R}_{[p,q]}$.

First suppose the box of $D$ immediately after $a'$ is a box $b=(i+1,c_1)$ of III; then
$c_1<c$ and ${\bf M}_b>0$. In the modified reading word, $b$ contributes a {\tt (}
preceding the {\tt (} of $a$. As the latter is the leftmost unpaired {\tt (}, the former is
paired with a {\tt )} strictly between them; both {\tt (}'s lie in the segment
$\operatorname{row}(M_j)$, so this {\tt )} comes from a positive entry $g=(i,c_2)$ of $M_j$
with $c_1<c_2<c$. Replacing $a'$ by $g$ in $D$ gives a long antidiagonal for ${\bf M}$, a
contradiction. Consequently, if the box of $D$ after $a'$ does not lie in II (in
particular, if there is no such box), then replacing $a'$ by $a$ gives a long antidiagonal
for ${\bf M}$. So assume it does; then ${\mathcal R}_{[p,q]}$ contains II and $a'$ is the
last box of $D$ in III.

Let $\ast$ be the first box of $D$ in I and $k$ its column, with $k=0$ if there is no such
box. Since the boxes of $D$ in II lie in rows $i+1,\ldots,d_j$ and in columns
$k+1,\ldots,d_j$, there are at most $d_j-\max(i,k)$ of them; conversely, the antidiagonal
of the southeast $(d_j-\max(i,k))$-square of II is compatible with every box of $D$ outside
II, as is straightforward to check. The same holds after $a'$ is deleted from $D$ or
replaced by a box of III in a row $\le i$. We use this repeatedly below.

If $k\le i-1$, delete $a'$ from $D$: the last box of $D$ in III now lies in a row $<i$, so
II may contribute $d_j-i+1$ boxes, and we obtain an antidiagonal for ${\bf M}$ of length at
least $|D|$, a contradiction. If $k\ge i+1$, replace $a'$ by $a$: this changes neither
$\max(i,k)$ nor the length, a contradiction again.

Finally, suppose $k=i$. In the modified reading word, the $1$ in row $i+1$ corresponds to
the leftmost unpaired {\tt (}. However, box $\ast$ in column $i$ also corresponds to a {\tt (}.
If that latter {\tt (} is unpaired, then in fact the crystal move affects $M_{j-1}$, not
$M_j$ as assumed. So it is paired, and the only two ways for this are:
\begin{itemize}
\item[(1)] a positive entry in box $z$ in column $i+1$ of $M_{j-1}$, northeast of $\ast$. Replace
$a'$ by $a$ and insert $z$ immediately before $\ast$: the first box in I is now $z$, so II
contributes one box fewer, while $D$ has gained $z$.
\item[(2)] a positive entry in box $g=(i,c_2)$ of $M_j$ with $c_2<c$. Replace $a'$ by $g$, which
changes neither $\max(i,k)$ nor the length.
\end{itemize}
Either way we obtain a long antidiagonal for ${\bf M}$, completing Subcase~1.2.

\smallskip
Transposing the $Q$-shape, or rotating it by $180^{\circ}$, reverses the bracket sequence
of $\operatorname{rev}_{-}({\bs w}^{(j)})$ and interchanges {\tt (} with {\tt )}; hence
either operation exchanges the leftmost unpaired {\tt (} with the rightmost unpaired
{\tt )}, that is, $\mate_i^{(j)}$ with $\matf_i^{(j)}$, and both preserve antidiagonals.

\smallskip
\noindent\emph{Subcase 1.3:} ($\matf_i^{(j)}$ moves a value of $1$ left in $M_{j-1}$) This
is the transposed version of Subcase~1.2.

\smallskip
\noindent\emph{Subcase 1.4:} ($\mate_i^{(j)}$ moves a value of $1$ right in $M_{j-1}$) This
is the transposed version of Subcase~1.1.

\medskip
\noindent
\emph{Case 2:} $\left(
\begin{tikzpicture}[baseline=-0.5ex]
\node[circle,fill,inner sep=1pt,label=below:{\scriptsize $j-1$}] (a) at (0,0) {};
\node[circle,fill,inner sep=1pt,label=below:{\scriptsize $j$}]   (b) at (.9,0) {};
\node[circle,fill,inner sep=1pt,label=below:{\scriptsize $j+1$}] (c) at (1.8,0) {};
\draw[<-] (a) -- node[above,font=\scriptsize] {$M_{j-1}$} (b);
\draw[<-] (b) -- node[above,font=\scriptsize] {$M_j$} (c);
\end{tikzpicture}\right)$
Here, the $Q$-shape of ${\bf M}$ locally looks like:
$\vcenter{\hbox{\begin{tikzpicture}[scale=1.1, baseline=-.5ex]
\draw (0,0) rectangle (.84,.84);
\node at (.42,.42) {\small $M_{j-1}$};
\draw (.84,.84) rectangle (1.68,1.68);
\node at (1.26,1.26) {\small{$M_j$}};
\phantomtile{.84}{0}{3}{3}{}
\draw[purple, -{Stealth[length=2mm,width=1.5mm]}, thick] (1.68,.84) -- (0.84,.84);
\draw[purple, -{Stealth[length=2mm,width=1.5mm]}, thick] (1.68,1.68) -- (1.68,.84);
\draw[purple, -{Stealth[length=2mm,width=1.5mm]}, thick] (.84,.84) -- (.84,0);
\end{tikzpicture}}}$

The arguments here are the $180$ degree rotation of those in Case~1. Namely,
$\mate^{(j)}_i$ either moves a value of $1$ up in $M_{j-1}$ or right in $M_j$, and one
applies the arguments of Subcases~1.1 and~1.3 respectively; meanwhile $\matf^{(j)}_i$
either moves a value of $1$ down in $M_{j-1}$ or left in $M_j$, and these are argued as in
Subcases~1.2 and~1.4.
\end{proof}

\subsection{From matrices to tableaux}
For each ${\bf M}\in \Std(\Omega)$, associate  rational tableaux 
\[\operatorname{quiverRSK}({\bf M}) := \ins({\sf read}_Q({\bf M})).\]  
Let ${\sf profile}({\bf M})$ be the sequence of rational shapes appearing in ${\sf read}_Q({\bf M})$. 
By Theorem~\ref{thm:S6.3} we may refer to a connected component $\mathcal{C}({\bf M})\subseteq
\Std(\Omega)$ of ${\bf M}\in \Std(\Omega)$. 

\begin{theorem} \label{prop:quiverRSK-crystal-isom}
Let $\mathcal{C}$ be a connected component of $\Std(\Omega)$. 
The map 
\[\operatorname{quiverRSK}\!|_\mathcal{C}: \mathcal{C}\to \mathcal{B}_{\bf \Lambda}\] 
is a crystal isomorphism where
${\bf \Lambda}={\sf profile}({\bf M})$ is independent of the choice of ${\bf M}\in {\mathcal C}$.
\end{theorem}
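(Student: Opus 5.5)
The plan is to factor $\operatorname{quiverRSK}$ through the Knuth-class crystal $\mathcal{K}_{\d}^{\operatorname{sep}}$ and apply Proposition~\ref{prop:inj-implies-bij}. Write $\rho({\bf M}) := {\sf read}_Q({\bf M})\in\mathcal{W}_{\d}^{\operatorname{sep}}$. By construction of $\mate_i^{(j)},\matf_i^{(j)}$ we have $\rho(\mate_i^{(j)}{\bf M})=\worde_i^{(j)}(\rho({\bf M}))$, and similarly for $\matf$, whenever the result is not $\o$. Also $\wt({\bf M})=\wt(\rho({\bf M}))$ by \eqref{eqn:Admwtdef}.

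The first thing to check carefully is this intertwining. Moving a unit from $(r,c)$ to $(r\pm1,c)$ in a matrix read by $\operatorname{row}$ changes exactly the letter $r$ to $r\pm1$ in place. The reading order within a column is by position, so the word's letter order is unchanged. Likewise, moving a unit horizontally in a matrix read by $\overline{\operatorname{col}}$ changes $\barindex{c}$ to $\barindex{c\pm1}$ in place. The new matrix may have an entry split differently among boxes, but the multiset ordering in the reading word is the same. Moreover, moving within a column of $M$ does not alter the reading word at the \emph{other} endpoint of the edge. Indeed, the other vertex reads the same matrix by columns-of-rows, so that word changes too. This is the crux.

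So the operator $\mate_i^{(j)}$ changes the word ${\bs w}^{(j')}$ at the neighbouring vertex $j'$ as well. We must show that this new word is Knuth equivalent to the old one, i.e.\ that $\ins({\bs w}^{(j')})$ is unchanged. For $A_2$ this is the classical bicrystal commutation of row and column crystal operators (Danilov--Koshevoy, van Leeuwen): row operators preserve the column insertion tableau, and vice versa. For the general case I would reduce to this. The word ${\bs w}^{(j')}$ is a concatenation of the reading words of the two incident matrices. Only the factor from the shared matrix $M$ changes, and there the change is a column-reading Knuth move in the barred or unbarred alphabet; the surrounding context, after ordinification, is just concatenation. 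Knuth equivalence is a congruence under concatenation on the relevant separated/ordinified words, via Lemmas~\ref{lem:duality-shift-equiv} and~\ref{lem:col-ins-row-ins} and Theorem~\ref{thm:Steminsprops}(II). Hence the class of ${\bs w}^{(j')}$ is unchanged. I expect this step to be the main obstacle. The sign conventions (barred versus unbarred, $\operatorname{rev}_-$) and the Kinser--Rajchgot orientation cases must be matched to the $A_2$ bicrystal statement, probably via transposition and rotation as in the proof of Theorem~\ref{thm:S6.3}.

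Granting this, the map $\overline\rho:{\bf M}\mapsto[\rho({\bf M})]\in\mathcal{K}_{\d}^{\operatorname{sep}}$ intertwines $\mate_i^{(j)},\matf_i^{(j)}$ with $\worde_i^{(j)},\wordf_i^{(j)}$ on classes and preserves weight. The key check is $\o$-compatibility, which holds because the matrix operator is defined to be $\o$ exactly when the word operator is. By Theorem~\ref{thm:S6.3}, $\mathcal{C}$ is closed under the operators, so $\overline\rho(\mathcal{C})$ lies in one connected component of $\mathcal{K}_{\d}^{\operatorname{sep}}$. By Proposition~\ref{prop:July24abc}, $\ins$ maps that component isomorphically onto $\mathcal{B}_{\bf \Lambda}$, and ${\bf \Lambda}$ is constant on it. Hence ${\sf profile}$ is constant on $\mathcal{C}$, and $\operatorname{quiverRSK}|_{\mathcal C}=\ins\circ\overline\rho$ satisfies \eqref{eq:shifted-crystal-morphism} with $H=0$.

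It remains to show injectivity of $\operatorname{quiverRSK}$ on $\mathcal{C}$; Proposition~\ref{prop:inj-implies-bij} then yields that $\mathcal{C}$ is a crystal and the map is an isomorphism. I would argue by induction along the path of vertices, using the RSK bijection for matrices. The pair consisting of the tableaux at the two ends of an $A_2$ edge (row insertion of $\operatorname{row}(M)$ and column insertion of $\operatorname{col}(M)$) determines $M$. For longer quivers, the tableau at an interior vertex encodes both incident matrices' words concatenated. Here injectivity uses that elements of $\mathcal{C}$ are connected to a highest-weight element by raising operators, which commute with the map. So it suffices to show injectivity on highest-weight elements with fixed profile, where the recording data pins down each $M_j$ successively. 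If that proves delicate, a fallback is to compare cardinalities of $\mathcal{C}$ against $\mathcal{B}_{\bf \Lambda}$, with surjectivity already given by connectivity.
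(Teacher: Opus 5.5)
Your first half is sound and matches the paper's Proposition~\ref{lem:matrix-quiver-crystal-ops-RSK}. The word at the neighbouring vertex does change; your sentence saying the move ``does not alter'' it contradicts itself. It changes only within its Knuth class, and this gives the intertwining and the constancy of the profile on $\mathcal{C}$.

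The genuine gap is injectivity. Your reduction to highest-weight elements is valid as a reduction. But the statement you then propose to prove, injectivity on highest-weight elements of fixed profile with ``recording data'' pinning down each $M_j$, is false. $\operatorname{quiverRSK}$ keeps only one insertion tableau per vertex and no recording tableaux. By the intertwining you established, every highest-weight ${\bf M}$ of profile ${\bf\Lambda}$ maps to the unique highest-weight element of $\mathcal{B}_{\bf\Lambda}$. Whenever $m_{\Omega,{\bf\Lambda}}\ge 2$ (e.g.\ $\mathbb{C}^p\to\mathbb{C}^q\leftarrow\mathbb{C}^r$ with $p,q,r$ large, where multiplicities are Littlewood--Richardson coefficients), distinct highest-weight elements therefore have identical images. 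What must be shown is that a single component $\mathcal{C}$ contains only one highest-weight element, and this has to use the connectivity of $\mathcal{C}$ essentially. Your argument never does.

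Your cardinality fallback is circular. One can show the fibres of $\mathcal{C}\to\mathcal{B}_{\bf\Lambda}$ all have a common size $n_{\mathcal C}$, the number of highest-weight elements in $\mathcal{C}$. The character identity of Theorem~\ref{thm:poschar} then only yields $m_{\Omega,{\bf\Lambda}}=\sum_{\mathcal C}n_{\mathcal C}$, which does not force $n_{\mathcal C}=1$.

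The missing idea is the paper's path-reordering argument.
\begin{enumerate}
\item Operators at different vertices commute (Lemma~\ref{lem:quiver-crystal-commute}, resting on the row/column bicrystal commutation). So paths from a base point ${\bf M}_0$ to ${\bf M}$ and to ${\bf N}$ can be reordered to apply all vertex-$1$ operators first, then vertex-$2$, and so on.
\item Induct on $j$. The intermediate elements ${\bf M}^{(j)},{\bf N}^{(j)}$ arise from a common ${\bf P}$ by vertex-$j$ operators only. Hence their $j$-th words lie in one connected component of $\mathcal{W}_{d_j}^{\operatorname{sep}}$.
\item Later operators preserve the Knuth class of the $j$-th word. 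So $[{\sf read}_Q({\bf M})]=[{\sf read}_Q({\bf N})]$ makes these two words Knuth equivalent, hence equal by Theorem~\ref{thm:stembridge-word-crystals}.
\item Lemma~\ref{lem:single-vertex-injectivity} says the $j$-th word determines the matrices within a vertex-$j$ component, via \cite[Lemma~4.28]{AAA}. It gives ${\bf M}^{(j)}={\bf N}^{(j)}$.
\end{enumerate}
Only after this does Proposition~\ref{prop:inj-implies-bij} apply to give the crystal isomorphism.
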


To prove Theorem~\ref{prop:quiverRSK-crystal-isom}, we again use Proposition~\ref{prop:inj-implies-bij}. This involves showing that the reading map commutes with quiver crystal operators up to Knuth equivalence, and that on any connected component, the reading map is injective even up to Knuth classes. First, we need:
\begin{lemma}\label{lem:separated-crystal-combined-word}
Let ${\bs w}\in {\mathcal W}_{d}^{\operatorname{sep}}$ be the concatenation ${\bs w} = {\bs a}{\bs b}$ such that ${\bs a},{\bs b}\in {\mathcal W}_{d}\cup {\mathcal W}_{d}^-$. Then 
$\worde_i({\bs w}) \in\left\{\worde_i({\bs a}){\bs b},\ {\bs a}\worde_i({\bs b})\right\}$, and similarly for $\wordf_i$.
(By convention, ${\bs a}\o=\o{\bs b}=\o$.)
\end{lemma}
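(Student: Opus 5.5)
The argument is a direct bracket-matching computation. Since ${\bs w}$ is separated and ${\bs a},{\bs b}$ are each purely barred or purely unbarred, the modified word has the form $\operatorname{rev}_{-}({\bs w})=\operatorname{rev}_{-}({\bs a})\operatorname{rev}_{-}({\bs b})$. Here $\operatorname{rev}_{-}$ simply reverses a purely barred word and fixes a purely unbarred one. The point to check is that reversal happens only within each block. If ${\bs w}\in{\mathcal W}_d^{-+}$ then ${\bs a}$ is the barred part (or empty) and ${\bs b}$ is unbarred; the case ${\bs w}\in{\mathcal W}_d^{+-}$ is symmetric. There is also a degenerate case where both ${\bs a}$ and ${\bs b}$ are barred, or both unbarred. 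When both are unbarred the claim is the usual tensor-product rule for Kashiwara words. When both are barred, $\operatorname{rev}_{-}({\bs w})=\operatorname{rev}({\bs b})\operatorname{rev}({\bs a})$, so the roles of the two factors swap, but the argument below is unchanged.

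The bracket sequence is additive under concatenation. So $\operatorname{parens}_i(\operatorname{rev}_{-}({\bs w}))$ is the concatenation $P_1P_2$ of the bracket sequences of the two blocks (in the appropriate order). Moreover, the bracket attached to each letter is the same whether that letter is viewed in ${\bs w}$ or in its own block, since the assignment $i\mapsto{\tt )}$, $i{+}1\mapsto{\tt (}$, $\barindex{i}\mapsto{\tt (}$, $\barindex{i+1}\mapsto{\tt )}$ is letterwise.

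The standard reduction of $P_1P_2$ works in two steps. First cancel all matched pairs inside $P_1$ and inside $P_2$, leaving reduced forms ${\tt )}^{x_1}{\tt (}^{y_1}$ and ${\tt )}^{x_2}{\tt (}^{y_2}$. Then cancel $\min(y_1,x_2)$ further pairs across the junction. Brackets paired inside a block remain paired in the whole word, and no new pairings occur inside a block. Therefore the unpaired brackets of $P_1P_2$ are a subset of the unpaired brackets of $P_1$ together with those of $P_2$. Concretely, every unpaired ${\tt (}$ of $P_1P_2$ is either one of the last $y_1-\min(y_1,x_2)$ unpaired ${\tt (}$'s of $P_1$, or one of the $y_2$ unpaired ${\tt (}$'s of $P_2$.

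Now take the leftmost unpaired ${\tt (}$ of $P_1P_2$. If it lies in $P_1$, it is an unpaired ${\tt (}$ of $P_1$. It must be the leftmost one there, since any unpaired ${\tt (}$ of $P_1$ to its left would also be unpaired in $P_1P_2$: it is not among those cancelled across the junction, because junction cancellation consumes the rightmost unpaired ${\tt (}$'s of $P_1$ first. So the modified letter is exactly the one $\worde_i$ of that block would modify. If instead it lies in $P_2$, it is unpaired in $P_2$. It is also the leftmost unpaired ${\tt (}$ of $P_2$, because all unpaired ${\tt (}$'s of $P_2$ remain unpaired in $P_1P_2$. If no unpaired ${\tt (}$ exists, the result is $\o$, which matches the convention ${\bs a}\o=\o{\bs b}=\o$ applied to either factor.

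Finally, translate back from the modified word. Changing the chosen letter inside block $\operatorname{rev}_{-}({\bs a})$ and undoing $\operatorname{rev}_{-}$ gives $\worde_i({\bs a}){\bs b}$, because $\worde_i({\bs a})$ is defined by applying the same rule to $\operatorname{rev}_{-}({\bs a})$. The analogous statement gives ${\bs a}\worde_i({\bs b})$ for the other block. The argument for $\wordf_i$ is the mirror image: take the rightmost unpaired ${\tt )}$, and note that junction cancellation consumes the leftmost unpaired ${\tt )}$'s of the right block first. The only delicate points are confirming that $\operatorname{rev}_{-}$ respects the block decomposition in every case, and the leftmost/rightmost bookkeeping across the junction, where one must check which brackets the cross-block cancellation removes.
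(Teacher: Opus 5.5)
Your argument is correct, but it handles the barred and mixed cases differently from the paper.

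\textbf{The paper's route.} The paper runs your bracket-matching argument only in the purely unbarred case. It then reduces the general case to that one via ordinification:
\begin{enumerate}
\item[(i)] it uses that $\operatorname{parens}_i({\bs w}^+)$ is $\operatorname{parens}_i({\bs w})$ with contiguous \texttt{()} pairs inserted;
\item[(ii)] it applies the unbarred case to ${\bs a}^+{\bs b}$, ${\bs a}{\bs b}^+$ or ${\bs b}^+{\bs a}^+$, and pulls back with Lemma~\ref{lem-dual-words-crystals};
\item[(iii)] it concludes with the injectivity statement Lemma~\ref{lemma:May13aaa}(I), using that the two candidate words have the same number of barred letters.
\end{enumerate}

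\textbf{Your route.} You observe that $\operatorname{rev}_{-}({\bs w})$ is already the concatenation of $\operatorname{rev}_{-}({\bs a})$ and $\operatorname{rev}_{-}({\bs b})$, in swapped order when both are barred. Since the bracket assignment is letterwise, the signature argument applies verbatim to the two blocks, with no detour through ${\mathcal W}_d$. This is shorter and self-contained. The paper's version recycles machinery it needs anyway, at the cost of a three-way case split and an extra injectivity step.

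\textbf{Two small slips to fix.}
\begin{enumerate}
\item[(a)] The unpaired \texttt{(}'s of $P_1$ that survive in $P_1P_2$ are the \emph{first} (leftmost) $y_1-\min(y_1,x_2)$ of them, not the last. Your own later remark, that junction cancellation consumes the rightmost ones first, says exactly this, and it is what your leftmost argument actually uses.
\item[(b)] In the null case, state specifically that the block in the \emph{second} position of the modified word has no unpaired \texttt{(}, so $\worde_i$ of that block is null. This is all the convention requires. Saying it holds for ``either factor'' is wrong, since the first block may have unpaired \texttt{(}'s that are all cancelled at the junction.
\end{enumerate}
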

\begin{proof}
We prove the lemma for $\worde_i({\bs w})$ (the proof for $\wordf_i({\bs w})$ is similar).
First, suppose $\bs{w}$ is an unbarred word, i.e., ${\bs a}, {\bs b}\in {\mathcal W}_{d}$. Let $\text{{\tt (}}_u$
be the leftmost unpaired {\tt (} in $\operatorname{parens}_i({\bs w})=\operatorname{parens}_i({\bs a})\operatorname{parens}_i({\bs b})$.
 If $\text{{\tt (}}_u$ does not exist then no unpaired {\tt (} exists in ${\bs b}$, so $\worde_i({\bs w}) = \o = {\bs a}\o={\bs a}\worde_i({\bs b})$. So assume $\text{{\tt (}}_u$ exists.
If $\text{{\tt (}}_u$ is in ${\bs b}$, it is also the leftmost unpaired {\tt (} in ${\bs b}$ and 
$e_i({\bs w}) = {\bs a}e_i({\bs b})$.
Finally, if $\text{{\tt (}}_u$ lies in ${\bs a}$ then $\text{{\tt (}}_u$ is also unpaired in the substring $\operatorname{parens}_i({\bs a})$. 
Take any $\text{{\tt (}}_p$ strictly left of $\text{{\tt (}}_u$. Since $\text{{\tt (}}_p$ is
paired in ${\bs w}$, its pairing {\tt )} in ${\bs w}$ is left of $\text{{\tt (}}_u$, and therefore also in ${\bs a}$. Therefore $\text{{\tt (}}_p$ is paired in $\operatorname{parens}_i({\bs a})$. Thus, $\worde_i({\bs w}) = \worde_i({\bs a}){\bs b}$.

In general, by the proof of Lemma~\ref{lem-dual-words-crystals},
$\operatorname{parens}_i({\bs w}^+)$ is obtained from
$\operatorname{parens}_i({\bs w})$ by inserting contiguous {\tt ()} pairs.
Thus, by the unbarred case above:
\begin{itemize}
\item If $\bs{a}\in {\mathcal W}_{d}^-, \bs{b}\in  {\mathcal W}_{d}$, then 
$
\worde_i({\bs w})^+ = \worde_i({\bs w}^+) =  \worde_i({\bs a}^+ {\bs b}) \in 
\left\{\worde_i({\bs a}^+){\bs b},\ {\bs a}^+\worde_i({\bs b})\right\}.$
\item If $\bs{a}\in {\mathcal W}_{d}, \bs{b}\in {\mathcal W}_{d}^-$, then 
$\worde_i({\bs w})^+ = \worde_i({\bs w}^+) = \worde_i({\bs a}{\bs b}^+) \in 
\left\{\worde_i({\bs a}){\bs b}^+, \ {\bs a}\worde_i({\bs b}^+)\right\}$.
\item If $\bs{a},\bs{b}\in {\mathcal W}_{d}^-$, then
$\worde_i({\bs w})^+ = \worde_i({\bs w}^+) = \worde_i({\bs b}^+{\bs a}^+) \in 
\left\{\worde_i({\bs b}^+){\bs a}^+, \ {\bs b}^+\worde_i({\bs a}^+)\right\}$.
\end{itemize}
By applying Lemma~\ref{lem-dual-words-crystals} to each alternative above, we 
have
\[\worde_i({\bs w})^+\in\left\{(\worde_i({\bs a}){\bs b})^+, \ ({\bs a}\worde_i({\bs b}))^+\right\}.\]
If $\worde_i({\bs w})=\o$, the claim follows immediately. Otherwise, the
matching non-null words have the same number of barred letters, so the claim
follows from Lemma~\ref{lemma:May13aaa}(I).
\end{proof}

\begin{proposition} \label{lem:matrix-quiver-crystal-ops-RSK}
${\sf read}_Q(-)$ commutes with the quiver crystal operators up to Knuth equivalence, that is,
\[\worde_i^{(j)}([{\sf read}_Q({\bf M})]) = [{\sf read}_Q(\mate_i^{(j)}({\bf M}))],\]
\[\wordf_i^{(j)}([{\sf read}_Q({\bf M})]) = [{\sf read}_Q(\matf_i^{(j)}({\bf M}))],\]
for all $1\leq j\leq n, 1\leq i< d_j$. Furthermore,
if ${\bf N}\in \mathcal{C}({\bf M})$ then ${\sf profile}({\bf N})={\sf profile}({\bf M})$.
\end{proposition}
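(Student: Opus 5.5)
The plan is to fix a vertex $j$ and an operator, say $\mate_i^{(j)}$ (the $\matf_i^{(j)}$ case is symmetric), and to compare ${\sf read}_Q({\bf M})$ with ${\sf read}_Q(\mate_i^{(j)}({\bf M}))$ coordinate by coordinate. Only $M_{j-1}$ or $M_j$ changes, so only the words ${\bs w}^{(j-1)}$, ${\bs w}^{(j)}$, ${\bs w}^{(j+1)}$ can change. I will show two things:
\begin{itemize}
\item[(a)] the $j$-th word of ${\sf read}_Q(\mate_i^{(j)}({\bf M}))$ equals $\worde_i({\bs w}^{(j)})$ \emph{on the nose};
\item[(b)] the words at the neighbouring vertex change only within their Knuth class.
\end{itemize}
The final profile claim then follows from Proposition~\ref{prop:July24abc}. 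That proposition makes $\operatorname{ins}$ a crystal isomorphism on components of $\mathcal K_{\bf d}^{\operatorname{sep}}$, so the shapes of $\ins$ are constant along crystal edges, and by (a)--(b) these edges are exactly the images of edges of $\mathcal C({\bf M})$. Theorem~\ref{thm:S6.3} guarantees that we stay inside $\Std(\Omega)$.

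\emph{Step (a).} Write ${\bs w}^{(j)}={\bs a}{\bs b}$, where each of ${\bs a},{\bs b}$ is either $\operatorname{row}$ or $\overline{\operatorname{col}}$ of one of the two matrices. By Lemma~\ref{lem:separated-crystal-combined-word}, $\worde_i({\bs w}^{(j)})$ acts on a single piece, say the one coming from the matrix $M$.
\begin{itemize}
\item If that piece is $\operatorname{row}(M)$: inside a fixed column $c$ the letters $i$ (each a {\tt )}) all precede the letters $i+1$ (each a {\tt (}). Hence the leftmost unpaired {\tt (} that lies in column $c$ must be the first $i+1$ of that column. Turning it into $i$ keeps column $c$ weakly increasing, so the new word is exactly $\operatorname{row}$ of the matrix with one unit moved from $(i+1,c)$ to $(i,c)$.
\item If that piece is barred: the same argument applies to $\operatorname{rev}_-$ of the barred column word within a fixed row. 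This gives exactly the move $M_{r,c}\to M_{r,c+1}$ in the definition of $\mate_i^{(j)}$.
\end{itemize}
So (a) is an identity of words.

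\emph{Step (b).} This is the classical bicrystal commutation. Moving a unit within a column of $M$ does not change the Knuth class of $\operatorname{col}(M)$, and moving a unit within a row does not change the Knuth class of $\operatorname{row}(M)$. For unbarred words I would argue via RSK symmetry. Namely, $\ins(\operatorname{row}(M))$ and $\ins(\operatorname{col}(M))$ are the $P$ and $Q$ tableaux of $M$, and Kashiwara operators applied to the row word change $P$ while fixing $Q$ (Danilov--Koshevoy, van Leeuwen). For the barred column word $\overline{\operatorname{col}}(M)$ I would pass to ordinification and use Lemma~\ref{lem-dual-words-crystals} together with Lemmas~\ref{lem:duality-shift-equiv} and~\ref{lem:col-ins-row-ins}. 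These reduce Knuth equivalence of barred words with a fixed number of bars to ordinary Knuth equivalence of their reverse complements. It then remains to see that ${\bs w}^{(j\pm1)}={\bs a}{\bs b}$ stays in its Knuth class when one factor is replaced by a Knuth-equivalent factor. For $\ins_r$ we have $\ins_r({\bs a}{\bs b})=\ins_r({\bs a})\leftarrow{\bs b}$. Changing ${\bs a}$ within its class is then immediate. Changing ${\bs b}$ within its class I would handle by shifting to the straight-shape case, using Theorem~\ref{thm:Steminsprops}(I) and the fact that ordinary Knuth equivalence is a congruence; $\ins_c$ is handled dually via~\eqref{eqn:June21aaa}.

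I expect step (b) in the rational setting to be the main obstacle. The issue is that Knuth equivalence is defined here through $\ins$, not through local relations, so the congruence property for mixed barred/unbarred concatenations must be extracted carefully from the shift identities. I would first try to prove everything after applying ordinification to all words, where it becomes the ordinary statement, and then transport back using the fixed number of barred letters (Lemma~\ref{lemma:May13aaa}).
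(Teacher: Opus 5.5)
Your proposal is correct and follows essentially the paper's route. The $j$-th word changes exactly by $\worde_i$ on one factor (Lemma~\ref{lem:separated-crystal-combined-word}), and the neighbouring words stay in their Knuth classes by the $A_2$ bicrystal fact (the paper cites \cite[Lemma~4.29]{AAA}, while you derive it from RSK symmetry and the fact that Kashiwara operators fix the recording tableau) together with the concatenation property from Lemmas~\ref{lem:duality-shift-equiv} and~\ref{lem:col-ins-row-ins}; the profile claim then follows from the crystal isomorphism for separated words. Your explicit check in step (a), that the flipped letter is the first of its run in its column (resp.\ row) so the new word is literally a reading word, supplies a detail the paper treats as definitional; just state step (b) as invariance under the unit moved by a crystal operator, since an arbitrary move within a column or row can change the Knuth class.
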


\begin{proof} 
We prove the commutation claim for the raising operators; the proof for the lowering operators is the same.
Let ${\sf read}_Q({\bf M}) = ({\bs w}^{(1)},\ldots, {\bs w}^{(n)})$. By definition,
\[
\worde_i^{(j)}({\sf read}_Q({\bf M})) = ({\bs w}^{(1)},\ldots,  {\bs w}^{(j-1)}, \worde_i({\bs w}^{(j)}), {\bs w}^{(j+1)}, \ldots, {\bs w}^{(n)}).
\]
Let ${\sf read}_Q(\mate_i^{(j)}({\bf M})) = ({\bs v}^{(1)}, \ldots, {\bs v}^{(n)})$.
We need to show that
\begin{equation} \label{eq:ei-matrix-equals-word}
 {\bs v}^{(j)}\sim \worde_i({\bs w}^{(j)}), \qquad \text{ and for $k\ne j$, } {\bs v}^{(k)} \sim {\bs w}^{(k)}.
\end{equation}

Let ${\bf M}^{(j)}$ be the part of $Q$-shape of ${\bf M}$ corresponding to vertex $j$, meaning those rows and columns that
involve $M_{j-1}$ and $M_j$. There are four cases:

\smallskip
\noindent\emph{Case 1:}
$\left(
\begin{tikzpicture}[baseline=-0.5ex]
\node[circle,fill,inner sep=1pt,label=below:{\scriptsize $j-1$}] (a) at (0,0) {};
\node[circle,fill,inner sep=1pt,label=below:{\scriptsize $j$}]   (b) at (.9,0) {};
\node[circle,fill,inner sep=1pt,label=below:{\scriptsize $j+1$}] (c) at (1.8,0) {};

\draw[<-] (a) -- node[above,font=\scriptsize] {$M_{j-1}$} (b);
\draw[->] (b) -- node[above,font=\scriptsize] {$M_j$} (c);
\end{tikzpicture}\right)$
$\qquad {\bf M}^{(j)} =\vcenter{\hbox{\begin{tikzpicture}[scale=1.1, baseline=-.5ex]

\draw (0,0) rectangle (.84,.84);
\node at (.42,.42) {\small{$M_{j-1}$}};

\draw (.84,0) rectangle (1.68,.84);
\node at (1.26,0.42) {\small{$M_j$}};

\draw[purple, -{Stealth[length=2mm,width=1.5mm]}, thick] (.84,.84) -- (.84,0);
\draw[purple, -{Stealth[length=2mm,width=1.5mm]}, thick] (.84,.84) -- (1.68,.84);
\end{tikzpicture}}}$ $\qquad {\bs w}^{(j)} = \operatorname{row}(M_{j-1})\operatorname{row}(M_j)$

\noindent\emph{Case 2:}
$\left(
\begin{tikzpicture}[baseline=-0.5ex]
\node[circle,fill,inner sep=1pt,label=below:{\scriptsize $j-1$}] (a) at (0,0) {};
\node[circle,fill,inner sep=1pt,label=below:{\scriptsize $j$}]   (b) at (.9,0) {};
\node[circle,fill,inner sep=1pt,label=below:{\scriptsize $j+1$}] (c) at (1.8,0) {};

\draw[->] (a) -- node[above,font=\scriptsize] {$M_{j-1}$} (b);
\draw[<-] (b) -- node[above,font=\scriptsize] {$M_j$} (c);
\end{tikzpicture}\right)$
$\qquad {\bf M}^{(j)} =\vcenter{\hbox{\begin{tikzpicture}[scale=1.1, baseline=-.5ex]

\draw (0,0) rectangle (.84,.84);
\node at (.42,.42) {\small{$M_{j-1}$}};

\draw (0,.84) rectangle (.84,1.68);
\node at (0.42,1.26) {\small{$M_j$}};

\draw[purple, -{Stealth[length=2mm,width=1.5mm]}, thick] (0,.84) -- (.84,.84);
\draw[purple, -{Stealth[length=2mm,width=1.5mm]}, thick] (.84,.84) -- (.84,1.68);
\end{tikzpicture}}}$ $\qquad \qquad {\bs w}^{(j)} =\overline{\operatorname{col}}(M_j) \overline{\operatorname{col}}(M_{j-1})$

\noindent\emph{Case 3:}
$\left(
\begin{tikzpicture}[baseline=-0.5ex]
\node[circle,fill,inner sep=1pt,label=below:{\scriptsize $j-1$}] (a) at (0,0) {};
\node[circle,fill,inner sep=1pt,label=below:{\scriptsize $j$}]   (b) at (.9,0) {};
\node[circle,fill,inner sep=1pt,label=below:{\scriptsize $j+1$}] (c) at (1.8,0) {};

\draw[->] (a) -- node[above,font=\scriptsize] {$M_{j-1}$} (b);
\draw[->] (b) -- node[above,font=\scriptsize] {$M_j$} (c);
\end{tikzpicture}\right)$ $\qquad {\bf M}^{(j)}=\vcenter{\hbox{\begin{tikzpicture}[scale=1.1, baseline=-.5ex]

\draw (0,0) rectangle (.84,.84);
\node at (.42,.42) {\small{$M_{j-1}$}};

{\phantomtile{0}{.84}{3}{3}{
}}

\draw (.84,.84) rectangle (1.68,1.68);
\node at (1.26,1.26) {\small{$M_j$}};

\draw[purple, -{Stealth[length=2mm,width=1.5mm]}, thick] (0,.84) -- (.84,.84);
\draw[purple, -{Stealth[length=2mm,width=1.5mm]}, thick] (.84,1.68) -- (.84,.84);
\draw[purple, -{Stealth[length=2mm,width=1.5mm]}, thick] (.84,1.68) -- (1.68,1.68);
\end{tikzpicture}}}$ $\qquad {\bs w}^{(j)} = \overline{\operatorname{col}}(M_{j-1})\operatorname{row}(M_j)$

\noindent\emph{Case 4:}
$\left(
\begin{tikzpicture}[baseline=-0.5ex]
\node[circle,fill,inner sep=1pt,label=below:{\scriptsize $j-1$}] (a) at (0,0) {};
\node[circle,fill,inner sep=1pt,label=below:{\scriptsize $j$}]   (b) at (.9,0) {};
\node[circle,fill,inner sep=1pt,label=below:{\scriptsize $j+1$}] (c) at (1.8,0) {};

\draw[<-] (a) -- node[above,font=\scriptsize] {$M_{j-1}$} (b);
\draw[<-] (b) -- node[above,font=\scriptsize] {$M_j$} (c);
\end{tikzpicture}\right)$ 
$\qquad {\bf M}^{(j)}=\vcenter{\hbox{\begin{tikzpicture}[scale=1.1, baseline=-.5ex]

\draw (0,0) rectangle (.84,.84);
\node at (.42,.42) {\small $M_{j-1}$};

\draw (.84,.84) rectangle (1.68,1.68);
\node at (1.26,1.26) {\small{$M_j$}};

\phantomtile{.84}{0}{3}{3}{}

\draw[purple, -{Stealth[length=2mm,width=1.5mm]}, thick] (1.68,.84) -- (0.84,.84);
\draw[purple, -{Stealth[length=2mm,width=1.5mm]}, thick] (1.68,1.68) -- (1.68,.84);
\draw[purple, -{Stealth[length=2mm,width=1.5mm]}, thick] (.84,.84) -- (.84,0);
\end{tikzpicture}}}$
$\qquad {\bs w}^{(j)} = \operatorname{row}(M_{j-1})\overline{\operatorname{col}}(M_j)$

(The endpoint cases $j=1,n$ are the same, with the nonexistent adjacent block omitted.)

We will show that~\eqref{eq:ei-matrix-equals-word} holds in Case 3; the arguments for the other cases are similar.

If $|k-j|\geq 2$, ${\bs v}^{(k)} = {\bs w}^{(k)}$ since ${\bf M}^{(k)}$ does not overlap with ${\bf M}^{(j)}$.
Thus, it remains to check ${\bs v}^{(j\pm 1)}$ where we have
\[
{\bf M}^{(j-1)} =
\vcenter{\hbox{\begin{tikzpicture}[scale=1.1, baseline=-.5ex]

\draw (0,0) rectangle (.84,.84);
\node at (.42,.42) {\small{$M_{j-2}$}};

\draw (.84,0) rectangle (1.68,.84);
\node at (1.26,0.42) {\small{$M_{j-1}$}};

\draw[purple, -{Stealth[length=2mm,width=1.5mm]}, thick] (.84,.84) -- (.84,0);
\draw[purple, -{Stealth[length=2mm,width=1.5mm]}, thick] (.84,.84) -- (1.68,.84);
\end{tikzpicture}}}
\,\text{or}\,
\vcenter{\hbox{\begin{tikzpicture}[scale=1.1, baseline=-.5ex]

\draw (0,0) rectangle (.84,.84);
\node at (.42,.42) {\small{$M_{j-2}$}};

{\phantomtile{0}{.84}{3}{3}{
}}

\draw (.84,.84) rectangle (1.68,1.68);
\node at (1.26,1.26) {\small{$M_{j-1}$}};

\draw[purple, -{Stealth[length=2mm,width=1.5mm]}, thick] (0,.84) -- (.84,.84);
\draw[purple, -{Stealth[length=2mm,width=1.5mm]}, thick] (.84,1.68) -- (.84,.84);
\draw[purple, -{Stealth[length=2mm,width=1.5mm]}, thick] (.84,1.68) -- (1.68,1.68);
\end{tikzpicture}}},
\qquad
{\bf M}^{(j+1)} =\vcenter{\hbox{\begin{tikzpicture}[scale=1.1, baseline=-.5ex]

\draw (0,0) rectangle (.84,.84);
\node at (.42,.42) {\small{$M_{j}$}};

\draw (0,.84) rectangle (.84,1.68);
\node at (0.42,1.26) {\small{$M_{j+1}$}};

\draw[purple, -{Stealth[length=2mm,width=1.5mm]}, thick] (0,.84) -- (.84,.84);
\draw[purple, -{Stealth[length=2mm,width=1.5mm]}, thick] (.84,.84) -- (.84,1.68);
\end{tikzpicture}}}
\,\text{or} \,\, \vcenter{\hbox{\begin{tikzpicture}[scale=1.1, baseline=-.5ex]

\draw (0,0) rectangle (.84,.84);
\node at (.42,.42) {\small{$M_{j}$}};

{\phantomtile{0}{.84}{3}{3}{
}}

\draw (.84,.84) rectangle (1.68,1.68);
\node at (1.26,1.26) {\small{$M_{j+1}$}};

\draw[purple, -{Stealth[length=2mm,width=1.5mm]}, thick] (0,.84) -- (.84,.84);
\draw[purple, -{Stealth[length=2mm,width=1.5mm]}, thick] (.84,1.68) -- (.84,.84);
\draw[purple, -{Stealth[length=2mm,width=1.5mm]}, thick] (.84,1.68) -- (1.68,1.68);
\end{tikzpicture}}}.\]
The application of $\mate_i^{(j)}$ to ${\bf M}$ sends ${\bs w}^{(j)}$ to $\worde_i({\bs w}^{(j)})$, which by Lemma~\ref{lem:separated-crystal-combined-word} means that $\overline{\operatorname{col}}(M_{j-1})\operatorname{row}(M_j)$ is sent to $\worde_i(\overline{\operatorname{col}}(M_{j-1})\operatorname{row}(M_j)) = \worde_i(\overline{\operatorname{col}}(M_{j-1}))\operatorname{row}(M_j)$ or  otherwise
$\overline{\operatorname{col}}(M_{j-1})\worde_i(\operatorname{row}(M_j))$. In the former case,
\[
\mate_i^{(j)}({\bf M})^{(j-1)} =
\vcenter{\hbox{\begin{tikzpicture}[scale=1.1, baseline=-.5ex]

\draw (0,0) rectangle (.84,.84);
\node at (.42,.42) {\small{$M_{j-2}$}};

\draw (.84,0) rectangle (2.60,.84);
\node at (1.70,0.42) {\small{$e_i^{\mathrm{col}}(M_{j-1})$}};
\draw[purple, -{Stealth[length=2mm,width=1.5mm]}, thick] (.84,.84) -- (.84,0);
\draw[purple, -{Stealth[length=2mm,width=1.5mm]}, thick] (.84,.84) -- (2.60,.84);
\end{tikzpicture}}}
\, \, \text{or}\, \,
\vcenter{\hbox{\begin{tikzpicture}[scale=1.1, baseline=-.5ex]

\draw (0,0) rectangle (.84,.84);
\node at (.42,.42) {\small{$M_{j-2}$}};

{\phantomtile{0}{.84}{3}{3}{
}}

\draw (.84,.84) rectangle (2.60,1.68);
\node at (1.70,1.26) {\small{$e_i^{\mathrm{col}}(M_{j-1})$}};

\draw[purple, -{Stealth[length=2mm,width=1.5mm]}, thick] (0,.84) -- (.84,.84);
\draw[purple, -{Stealth[length=2mm,width=1.5mm]}, thick] (.84,1.68) -- (.84,.84);
\draw[purple, -{Stealth[length=2mm,width=1.5mm]}, thick] (.84,1.68) -- (2.60,1.68);
\end{tikzpicture}}},
\qquad
\mate_i^{(j)}({\bf M})^{(j+1)} ={\bf M}^{(j+1)}.
\]
Whereas in the latter case 
\[
\mate_i^{(j)}({\bf M})^{(j-1)} ={\bf M}^{(j-1)},
\qquad
\mate_i^{(j)}({\bf M})^{(j+1)} =\vcenter{\hbox{\begin{tikzpicture}[scale=1.1, baseline=-.5ex]

\draw (0,0) rectangle (1.50,.84);
\node at (.76,.42) {\small{$e_i^{\mathrm{row}}(M_{j})$}};

\draw (0,.84) rectangle (1.50,1.68);
\node at (0.76,1.26) {\small{$M_{j+1}$}};

\draw[purple, -{Stealth[length=2mm,width=1.5mm]}, thick] (0,.84) -- (1.50,.84);
\draw[purple, -{Stealth[length=2mm,width=1.5mm]}, thick] (1.50,.84) -- (1.50,1.68);
\end{tikzpicture}}}
\, \, \text{or} \, \,
\vcenter{\hbox{\begin{tikzpicture}[scale=1.1, baseline=-.5ex]

\draw (0,0) rectangle (1.40,.84);
\node at (.72,.42) {\small{$e_i^{\mathrm{row}}(M_{j})$}};

{\phantomtile{0}{.84}{5}{3}{
}}

\draw (1.40,.84) rectangle (1.40+.84,1.68);
\node at (1.82,1.26) {\small{$M_{j+1}$}};

\draw[purple, -{Stealth[length=2mm,width=1.5mm]}, thick] (0,.84) -- (1.40,.84);
\draw[purple, -{Stealth[length=2mm,width=1.5mm]}, thick] (1.4,1.68) -- (1.4,.84);
\draw[purple, -{Stealth[length=2mm,width=1.5mm]}, thick] (1.4,1.68) -- (1.40+.84,1.68);
\end{tikzpicture}}}.\]

By \cite[Lemma~4.29]{AAA}, acting by a row crystal operator on a matrix only changes the column word up to Knuth equivalence, and similarly for a column operator and the row word, so 
$\operatorname{row}(e_i^{\mathrm{col}}(M_{j-1})) \sim \operatorname{row}(M_{j-1})$ and
$\overline{\operatorname{col}}(e_i^{\mathrm{row}}(M_j)) \sim \overline{\operatorname{col}}(M_j)$. 
By Lemmas~\ref{lem:duality-shift-equiv} and
\ref{lem:col-ins-row-ins}, together with the fact
that ordinary Knuth equivalence is preserved under concatenation,
concatenating the same barred or unbarred word preserves Knuth
equivalence when the resulting separated words have the same type.
Hence we see \eqref{eq:ei-matrix-equals-word} holds since:
 \begin{eqnarray}\nonumber
 {\bs v}^{(j-1)}  = &   \operatorname{row}(M_{j-2})\operatorname{row}(\worde_i^{\mathrm{col}}(M_{j-1})) &  \text{or} \;\; \overline{\operatorname{col}}(M_{j-2})\operatorname{row}(\worde_i^{\mathrm{col}}(M_{j-1}))\\ \nonumber
   \sim &  \operatorname{row}(M_{j-2})\operatorname{row}(M_{j-1})  & \text{or} \;\; \overline{\operatorname{col}}(M_{j-2})\operatorname{row}(M_{j-1})\\ \nonumber
    = &  {\bs w}^{(j-1)}; & \ \\ \nonumber
 {\bs v}^{(j+1)} = & \overline{{\operatorname{col}}}(e_i^{\mathrm{row}}(M_j))\overline{{\operatorname{col}}}(M_{j+1}) & \text{or} \;\; \overline{{\operatorname{col}}}(e_i^{\mathrm{row}}(M_j))\operatorname{row}(M_{j+1})\\ \nonumber
  \sim & \overline{{\operatorname{col}}}(M_j)\overline{{\operatorname{col}}}(M_{j+1}) & \text{or} \;\; \overline{{\operatorname{col}}}(M_j)\operatorname{row}(M_{j+1})\\
   = & {\bs w}^{(j+1)}. & \ \nonumber
 \end{eqnarray}

The statement about profiles follows from \eqref{eq:ei-matrix-equals-word} and 
Theorem~\ref{thm:stembridge-word-crystals} combined.
\end{proof}

\begin{lemma}\label{lem:single-vertex-injectivity}
Fix $j\in Q_0$, and let $\mathcal D$ be a connected component under
the $j$-crystal operators
\[\mate_i^{(j)},\matf_i^{(j)},\qquad 1\leq i<d_j.\]
Let ${\bs w}^{(j)}({\bf M})$ be the $j$-th entry of
${\sf read}_Q({\bf M})$. Then the map
\[{\bf M}\longmapsto {\bs w}^{(j)}({\bf M})\]
is injective on $\mathcal D$.
\end{lemma}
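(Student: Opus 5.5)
The plan is to exploit two facts. First, the $j$-crystal operators only move units within the two blocks $M_{j-1}$ and $M_j$ adjacent to vertex $j$, and they move a unit only within a single row or a single column of one of those blocks. Second, the word ${\bs w}^{(j)}({\bf M})$ records exactly the positions of these units in the direction orthogonal to the moves.

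\textbf{Step 1: invariants on $\mathcal D$.} Every ${\bf N}\in\mathcal D$ agrees with ${\bf M}$ outside $M_{j-1},M_j$. Moreover, each $\mate_i^{(j)},\matf_i^{(j)}$ acting on a block whose $j$-reading word is $\operatorname{row}$ moves a unit up or down within a column. So every column sum of that block is invariant, as is the block's total. Dually, for a block read by $\overline{\operatorname{col}}$, the unit moves left or right within a row, so every row sum is invariant. Also, by Lemma~\ref{lem:separated-crystal-combined-word}, the operator acts on exactly one of the two subwords ${\sf read}_j(M_{j-1})$, ${\sf read}_j(M_j)$. Consequently the lengths of these two subwords are constant on $\mathcal D$.

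\textbf{Step 2: recover ${\bf N}$ from ${\bs w}^{(j)}({\bf N})$.} Fix the order of the two factors, which is determined by the orientation as in the four cases of Proposition~\ref{lem:matrix-quiver-crystal-ops-RSK}. By Step 1 the lengths of the two factors are known, so ${\bs w}^{(j)}({\bf N})$ splits uniquely into ${\sf read}_j(M_{j-1}^{\bf N})$ and ${\sf read}_j(M_j^{\bf N})$.

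Now take a block read by $\operatorname{row}$. Its row word lists the row indices column by column, left to right, with entries in each column weakly increasing top to bottom. The column sums are known and invariant, so we can cut the word into consecutive segments of the prescribed lengths. Each segment gives the multiset of row indices in that column, hence the column itself. A block read by $\overline{\operatorname{col}}$ is handled the same way, cutting into consecutive row segments using the invariant row sums. The remaining blocks coincide with those of ${\bf M}$. Thus ${\bf N}$ is determined by ${\bs w}^{(j)}({\bf N})$ together with data constant on $\mathcal D$, which gives injectivity.

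\textbf{Main obstacle.} The hard part is the bookkeeping in Step 1: checking carefully, for each of the four local orientations and the endpoint cases, that the moves prescribed in \S\ref{sec:ordbimoves} really are column moves for row-read blocks and row moves for column-read blocks. From the definition this amounts to the following. A change $i+1\to i$ in a letter of $\operatorname{row}$ moves the unit from row $r$ to row $r-1$ in the same column. A change $\barindex{i}\to\barindex{i+1}$ moves the unit from column $c$ to column $c+1$ in the same row. There is one further point to check: the reading segment for a column must remain a contiguous block in the reading order after the move. This holds because the reading order is by column first, so the contiguity is automatic.
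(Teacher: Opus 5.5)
Your proof is correct, but it follows a different route from the paper.

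\emph{The paper's route.} It first settles the single-block ($A_2$) case by citing \cite[Lemma~4.28]{AAA}: the map $M\mapsto\operatorname{row}(M)$ is injective on an orbit of the row bicrystal operators, and dually for $\overline{\operatorname{col}}$. For a source or sink, the $j$-operators are exactly bicrystal operators on the combined $M_{j-1},M_j$ block. When a phantom region is incident to $j$, it uses Lemma~\ref{lem:separated-crystal-combined-word} to see that each step of a path in $\mathcal D$ acts on only one block. It then deletes the steps acting on the other block to get separate bicrystal paths, and applies the $A_2$ case twice.

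\emph{Your route.} You write down explicit invariants of $\mathcal D$: the blocks away from vertex $j$, the column sums of a block read by $\operatorname{row}$ at $j$, and the row sums of a block read by $\overline{\operatorname{col}}$ at $j$. All of these are immediate from the move rules of \S\ref{sec:ordbimoves}. You then recover ${\bf N}$ from ${\bs w}^{(j)}({\bf N})$ by cutting the word into column (respectively row) segments, which is just the biword reconstruction.

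\emph{What each buys.} Your argument is self-contained, treats all orientations and the endpoints uniformly, and in effect reproves the cited $A_2$ lemma while avoiding the path surgery. It also gives a stronger statement: injectivity on the whole set of fillings with the same off-vertex blocks and the same relevant margins, not merely on a connected component. The paper's route is shorter on the page only because it delegates the single-block case to \cite{AAA}.

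Two minor remarks. First, the appeal to Lemma~\ref{lem:separated-crystal-combined-word} in Step~1 is unnecessary. Each block total is a sum of invariant margins, and in the mixed orientations the split is visible from the bars anyway. Second, the ``main obstacle'' you flag is not really one. \S\ref{sec:ordbimoves} defines an unbarred change as a vertical move within a column and a barred change as a horizontal move within a row.
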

\begin{proof}
The $A_2$ cases $\left(
\begin{tikzpicture}[baseline=-0.5ex]
\node[circle,fill,inner sep=1pt,label=below:{\scriptsize $j$}] (a) at (0,0) {};
\node[circle,fill,inner sep=1pt,label=below:{\scriptsize $j+1$}]   (b) at (.9,0) {};
\draw[->] (a) -- node[above,font=\scriptsize] {$M_{j}$} (b);
\end{tikzpicture},
\begin{tikzpicture}[baseline=-0.5ex]
\node[circle,fill,inner sep=1pt,label=below:{\scriptsize $j$}] (a) at (0,0) {};
\node[circle,fill,inner sep=1pt,label=below:{\scriptsize $j+1$}]   (b) at (.9,0) {};
\draw[<-] (a) -- node[above,font=\scriptsize] {$M_{j}$} (b);
\end{tikzpicture}\right)$ are proved by \cite[Lemma~4.28]{AAA}. We need a consequence of that lemma, which is 
stronger than the $A_2$ case \emph{per se}:
In the former case, the 
$j$-crystal operators are the row bicrystal operators $e_i^{\operatorname{row}}, f_i^{\operatorname{row}}$, whereas in the
latter case they are the column bicrystal operators $e_i^{\operatorname{col}}, f_i^{\operatorname{col}}$. The
map from $d_j\times d_{j+1}$ nonnegative integer matrices $M$ to the word $\operatorname{row}(M)$
is injective on the set of matrices obtained from a fixed matrix by iterated application of the row bicrystal operators. Similarly, one has an 
injection on the map $M\to \overline{\operatorname{col}}(M)$ (up to the innocuous use of barred symbols) on the matrices obtained by using column bicrystal operators.

Now we claim the general case reduces to the $A_2$ analysis. We refer the reader to the four cases in the proof of Proposition~\ref{lem:matrix-quiver-crystal-ops-RSK} where the blocks occupied by $M_{j-1},M_j$ possibly affected by the $j$-crystal operators are depicted, along with their reading word. 

In \emph{Case 1} and \emph{Case 2}, 
the $j$-crystal operators are precisely the row and column bicrystal operators on $d_{j}\times (d_{j-1}+d_{j+1})$-size
and $(d_{j-1}+d_{j+1})\times d_{j}$-size matrices, respectively. Hence these cases immediately follow.

In \emph{Case 3}, the $j$-crystal operators are not exactly the
bicrystal operators on a single matrix, but each operator acts either
as a row bicrystal operator on $M_j$'s block or as a column bicrystal operator
on $M_{j-1}$'s block; this follows from Lemma~\ref{lem:separated-crystal-combined-word}. 
Thus, for any two elements of ${\mathcal D}$, deleting
from a path between them the steps acting on $M_{j-1}$'s block gives a
sequence of row bicrystal operators between the corresponding
$M_j$'s; similarly, deleting the steps acting on $M_j$'s block 
gives a sequence of column bicrystal operators between
the matrices occupying their respective $M_{j-1}$-blocks.
Here ${\bs w}^{(j)}({\bf M})$ consists of the barred segment
$\overline{\operatorname{col}}(M_{j-1})$ followed by the unbarred
segment $\operatorname{row}(M_j)$.
Since every $j$-crystal operator preserves the sum of the entries in
each block, the lengths of these two segments are fixed. Hence
${\bs w}^{(j)}({\bf M})$ determines the two segments separately, and
injectivity follows from two applications of the $A_2$ case. A similar argument applies to \emph{Case 4}.
\end{proof}

\begin{lemma}[Quiver crystal operator commutation]\label{lem:quiver-crystal-commute}
If $j\neq k$,
\[T_i^{(j)}\in\{\mate_i^{(j)},\matf_i^{(j)}\},
\qquad
U_r^{(k)}\in\{\mate_r^{(k)},\matf_r^{(k)}\},\]
then
\[T_i^{(j)}U_r^{(k)}({\bf M})=U_r^{(k)}T_i^{(j)}({\bf M}).\]
\end{lemma}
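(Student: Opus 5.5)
We need to prove that quiver crystal operators at distinct vertices $j\neq k$ commute. Throughout, we use the convention that both sides may be $\o$, and the claim includes that one side is $\o$ exactly when the other is.

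\emph{Non-adjacent vertices.} If $|j-k|\geq 2$, the blocks ${\bf M}^{(j)}$ (built from $M_{j-1},M_j$) and ${\bf M}^{(k)}$ (built from $M_{k-1},M_k$) are disjoint. Then $T_i^{(j)}$ is computed from ${\bs w}^{(j)}({\bf M})$ and alters only $M_{j-1},M_j$. These matrices do not appear in ${\bs w}^{(k)}$, and symmetrically for $U_r^{(k)}$. So each operator sees the same reading word before and after the other acts. The two moves therefore act on different matrices, and they commute, including the $\o$ cases.

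\emph{Adjacent vertices.} Assume $k=j+1$; the case $k=j-1$ is symmetric. The shared matrix is $M_j$, on the edge between $j$ and $j+1$. One of these vertices is the tail of that edge, so in its reading word $M_j$ enters through $\operatorname{row}(M_j)$, and the operator there moves a unit vertically in $M_j$, i.e.\ acts as a row bicrystal operator. The other vertex is the head, so $M_j$ enters through $\overline{\operatorname{col}}(M_j)$, and the operator there moves a unit horizontally in $M_j$, acting as a column bicrystal operator. By Lemma~\ref{lem:separated-crystal-combined-word}, each of $T_i^{(j)}$ and $U_r^{(k)}$ acts either on its non-shared block ($M_{j-1}$, respectively $M_{j+1}$) or on $M_j$.

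The key input is the following pair of facts from the $A_2$ bicrystal theory \cite{Danilov,van,AAA}:
\begin{itemize}
\item[(a)] row and column bicrystal operators on a single matrix commute;
\item[(b)] a row operator changes $\operatorname{col}(M)$ only within its Knuth class, and a column operator changes $\operatorname{row}(M)$ only within its Knuth class \cite[Lemma~4.29]{AAA}.
\end{itemize}

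Fact (b) alone does not suffice, because the crystal operators on ${\bs w}^{(j)}$ depend on the actual word, not just its Knuth class. What is needed is the stronger invariance underlying the bicrystal commutation: the unpaired-bracket structure governing $e^{\mathrm{row}}_i$ is unchanged by $e^{\mathrm{col}}_r$ moves, and conversely. I would establish this directly. A column move shifts one unit from column $c$ to column $c\pm1$ within a fixed row $s$. When reading $\operatorname{row}(M_j)$, this relocates a single letter $s$ within the segment of columns $c,c+1$, and all other letters keep their order. The task is to check that, for the pair $i,i+1$, the position in $\operatorname{row}(M_j)$ of the unit selected by the row operator is either unaffected or shifted by this relocation in a controlled way.

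Assuming that invariance, the argument finishes in three steps.

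\emph{Step 1.} The $\{i,i+1\}$-bracket word of ${\bs w}^{(j)}$ changes, under $U_r^{(k)}$, only by the relocation described above. The portion coming from the non-shared block ($M_{j-1}$, read as $\overline{\operatorname{col}}$ or $\operatorname{row}$) is untouched. Hence $T_i^{(j)}$ selects the same block and the "same" unit before and after $U_r^{(k)}$, where for the shared block $M_j$ this is understood via the bicrystal commutation in (a).

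\emph{Step 2.} Symmetrically, $U_r^{(k)}$ selects the same unit before and after $T_i^{(j)}$.

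\emph{Step 3.} The two resulting unit moves either lie in different matrices, in which case they trivially commute, or both lie in $M_j$, in which case they commute by (a).

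The $\o$ cases follow from the same invariance: the existence of an unpaired bracket is preserved on both sides.

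\emph{Main obstacle.} The hardest part is Step 1 when the shared unit sits next to the junction between the two segments of ${\bs w}^{(j)}$, for instance in Case~3 with the word $\overline{\operatorname{col}}(M_{j-1})\operatorname{row}(M_j)$. There, changing $\operatorname{row}(M_j)$ within its Knuth class could in principle change which segment contains the leftmost unpaired bracket. To handle this, I would use that the number of unpaired ${\tt (}$ and ${\tt )}$ in $\operatorname{row}(M_j)$ equals $\epsilon_i,\phi_i$ of its Knuth class, which is invariant under the column move. Lemma~\ref{lem:separated-crystal-combined-word}'s proof decides which segment is chosen purely from these counts, so the choice of block is preserved. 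Inside $M_j$, the selected unit is then the one chosen by $e_i^{\mathrm{row}}$, and Step 3 applies.
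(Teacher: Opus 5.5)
Your proposal follows the paper's proof. The case $|j-k|\geq 2$ is handled by disjointness. For adjacent vertices, both you and the paper use the $A_2$ commutation of row and column bicrystal operators on the shared matrix $M_j$. This is combined with the observation in your last paragraph, which is the paper's Claim~\ref{claim:July28zzz}: an operator of the other type keeps $\operatorname{row}(M_j)$, resp.\ $\overline{\operatorname{col}}(M_j)$, in its Knuth class. It therefore preserves the unpaired-bracket counts, and these decide which block is acted on and whether the operator is defined.

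The ``stronger invariance'' you defer is not needed and can be dropped. Once block choice and definedness are preserved, the action inside $M_j$ is exactly fact (a). The unit chosen in a non-shared block depends only on that block's segment, which is unchanged.
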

\begin{proof}
If $|j-k|\geq 2$, the two operators act on disjoint sets of matrix
blocks, so the claim is immediate. Thus assume $|j-k|=1$, and by symmetry assume
$k=j+1$. As in the proof of Lemma~\ref{lem:single-vertex-injectivity} we study the four cases
from Proposition~\ref{lem:matrix-quiver-crystal-ops-RSK}.

We will make use of this $A_2$ analysis. In \cite[Lemma~1.4.7]{van}
(restated in the notation below in \cite[Lemma~4.30]{AAA}): the row and column bicrystal operators commute, i.e.,
\[
\begin{aligned}
f_i^{\operatorname{row}}\!\left(f_r^{\operatorname{col}}(M)\right)
&=
f_r^{\operatorname{col}}\!\left(f_i^{\operatorname{row}}(M)\right), &
e_i^{\operatorname{row}}\!\left(e_r^{\operatorname{col}}(M)\right)
&=
e_r^{\operatorname{col}}\!\left(e_i^{\operatorname{row}}(M)\right),\\
e_i^{\operatorname{row}}\!\left(f_r^{\operatorname{col}}(M)\right)
&=
f_r^{\operatorname{col}}\!\left(e_i^{\operatorname{row}}(M)\right), &
f_i^{\operatorname{row}}\!\left(e_r^{\operatorname{col}}(M)\right)
&=
e_r^{\operatorname{col}}\!\left(f_i^{\operatorname{row}}(M)\right).
\end{aligned}
\]

Consider \emph{Case 1}. The $j$-crystal operator $T_i^{(j)}$ acts as a row bicrystal
operator on the $d_{j}\times (d_{j-1}+d_{j+1})$ block containing $M_{j-1}$ and $M_j$. The $(j+1)$-crystal 
operator $U_r^{(j+1)}$ either acts as a column bicrystal operator on $M_j$ or affects $M_{j+1}$. 

\begin{claim}\label{claim:July28zzz}
Suppose at least one of $U_r^{(j+1)}$ or $T_{i}^{(j)}$ when applied to ${\bf M}$ affects $M_j$. Then applying one
of these operators first does not affect whether the other operator returns $\o$, or which of its two adjacent blocks it
affects.
\end{claim}
\noindent\emph{Proof of Claim~\ref{claim:July28zzz}:} We make the argument for  \emph{Case 1}; a \emph{mutatis mutandis}
modification handles the other cases.  Suppose $U_{r}^{(j+1)}({\bf M})$ changes $M_j\mapsto M_j'$ by a column bicrystal operation. 
By \cite[Lemma~4.29]{AAA}, $\operatorname{row}(M_j)\sim \operatorname{row}(M_j')$. 
Now, by Proposition~\ref{prop:July24abc} applied to ${\mathcal K}_{d_j}$ one sees that the 
numbers of unmatched left and right parentheses
appearing in these Knuth equivalent two words are the same. From this, the claim 
follows by considering the definition of $T_i^{(j)}$ \emph{vis \`a vis} the words
${\operatorname{row}}(M_{j-1}) \operatorname{row}(M_j)$ 
and ${\operatorname{row}}(M_{j-1}) \operatorname{row}(M_j')$. 

The same argument with $T_i^{(j)}$ and $U_r^{(j+1)}$
interchanged proves the reverse statement, using the corresponding
barred-word version of the Knuth-equivalence argument from the proof
of Proposition~\ref{lem:matrix-quiver-crystal-ops-RSK}.
\qed

If $T_i^{(j)}$ affects $M_{j-1}$ and $U_{r}^{(j+1)}$ affects $M_{j+1}$ then clearly the operators commute.
The commutation proof in the other three possibilities of how the operators act follows from Claim~\ref{claim:July28zzz} and/or 
the $A_2$ analysis. This completes  \emph{Case 1}. \emph{Case 2} is similar.

Next suppose we are in \emph{Case 3}. As in the proof of \emph{Case 3} of 
Lemma~\ref{lem:single-vertex-injectivity}, the $j$-crystal operators $T_i^{(j)}$ either act as a row bicrystal
operator on $M_j$ or a column bicrystal operator on $M_{j-1}$. The $(j+1)$-crystal operators $U_{r}^{(j+1)}$
either act as column bicrystal operators on $M_j$ or act on $M_{j+1}$. As in \emph{Case 1}, the various subcases
of commutation are handled by Claim~\ref{claim:July28zzz} and/or the $A_2$ analysis. \emph{Case 4} is similarly reasoned.
\end{proof}

\begin{proof}[Proof of Theorem~\ref{prop:quiverRSK-crystal-isom}]
Let
\[
\Phi:{\mathcal C}\longrightarrow
{\mathcal K}_{\bf d}^{\operatorname{sep}},
\qquad
\Phi({\bf M})=[{\sf read}_Q({\bf M})].
\]
By Proposition~\ref{lem:matrix-quiver-crystal-ops-RSK}, $\Phi$ commutes with the
crystal operators, and it preserves weight by definition \eqref{eqn:Admwtdef}. We claim that $\Phi$ is
injective. Suppose ${\bf M},{\bf N}\in{\mathcal C}$ and
$\Phi({\bf M})=\Phi({\bf N})$.

Choose ${\bf M}_0\in{\mathcal C}$ and paths of quiver crystal
operators from ${\bf M}_0$ to ${\bf M}$ and from ${\bf M}_0$ to
${\bf N}$. By Lemma~\ref{lem:quiver-crystal-commute}, we may reorder
each path so that the $1$-crystal operators occur first, then all $2$-crystal operators, etc.

Let ${\bf M}^{(j)}$ and ${\bf N}^{(j)}$ be the matrices obtained
along the two reordered paths after the crystal operators for
vertices $1,\ldots,j$ have been applied. Thus
\[
{\bf M}^{(0)}={\bf N}^{(0)}={\bf M}_0,\qquad
{\bf M}^{(n)}={\bf M},\qquad
{\bf N}^{(n)}={\bf N}.
\]
We prove by induction on $j$ that
\begin{equation}\label{eqn:July24rst}
{\bf M}^{(j)}={\bf N}^{(j)}.
\end{equation}

The base case $j=0$ is by definition. For the inductive step, suppose
\[{\bf M}^{(j-1)}={\bf N}^{(j-1)}=: {\bf P}.\]
Both ${\bf M}^{(j)}$ and ${\bf N}^{(j)}$ are obtained from ${\bf P}$
using only $j$-crystal operators. 
By definition of the quiver
crystal operators, ${\bs w}^{(j)}({\bf M}^{(j)})$ and ${\bs w}^{(j)}({\bf N}^{(j)})$ are in the same connected component of the appropriate
$\mathcal W_{d_j}^{-+}$ or $\mathcal W_{d_j}^{+-}$.

Along the remainder of either reordered path, we only use $k$-crystal operators for
$k>j$. The $k$-crystal operators for $k\geq j+2$ keep the $j$-th reading word
fixed. Now, a $(j+1)$-crystal operator could alter the block occupied by $M_j$, 
and hence change the $j$-th reading word. However,
Proposition~\ref{lem:matrix-quiver-crystal-ops-RSK} shows that it
preserves its Knuth equivalence class. Combining with the assumption that
$\Phi({\bf M})=\Phi({\bf N})$ gives:
\[{\bs w}^{(j)}({\bf M}^{(j)})\sim {\bs w}^{(j)}({\bf M})\sim {\bs w}^{(j)}({\bf N})\sim {\bs w}^{(j)}({\bf N}^{(j)}).\]
By Theorem~\ref{thm:stembridge-word-crystals}, insertion is injective on
each connected component ${\mathcal E}$ of
${\mathcal W}_{d_j}^{\operatorname{sep}}$. Therefore a Knuth
equivalence class intersects ${\mathcal E}$ in at most one
word, so
\begin{equation}
\label{eqn:July27aaa}
{\bs w}^{(j)}({\bf M}^{(j)})={\bs w}^{(j)}({\bf N}^{(j)}).
\end{equation}
By construction and the inductive hypothesis, ${\bf M}^{(j)}$ and ${\bf N}^{(j)}$ are obtained from ${\bf P}$ using only the $j$-crystal operators.
Thus, in view of \eqref{eqn:July27aaa}, 
Lemma~\ref{lem:single-vertex-injectivity} implies \eqref{eqn:July24rst}. Thus ${\bf M}={\bf N}$ and
$\Phi$ is injective.

Let ${\mathcal D}$ be the connected component of
${\mathcal K}_{\bf d}^{\operatorname{sep}}$ containing
$\Phi({\bf M}_0)$. Since ${\mathcal C}$ is connected and $\Phi$
commutes with the crystal operators,
$\Phi({\mathcal C})\subseteq{\mathcal D}$.
Applying Proposition~\ref{prop:inj-implies-bij} now shows
$\Phi:{\mathcal C}\longrightarrow{\mathcal D}$
is a crystal isomorphism.

By Proposition~\ref{prop:July24abc}, insertion restricts to a
crystal isomorphism
\[
\operatorname{ins}:{\mathcal D}\longrightarrow{\mathcal B}_{\bf\Lambda}
\]
for some profile ${\bf\Lambda}$. We therefore have a composition of crystal isomorphisms:
\[
{\bf M}\longmapsto
[{\sf read}_Q({\bf M})]
\longmapsto
\operatorname{ins}({\sf read}_Q({\bf M}))
=
\operatorname{quiverRSK}({\bf M}).
\]
Hence $\operatorname{quiverRSK}|_{\mathcal C}:
{\mathcal C}\longrightarrow{\mathcal B}_{\bf\Lambda}$
is a crystal isomorphism.
\end{proof}

\begin{corollary} \label{cor:crystal-decomp-quiver-locus}
As $GL(\d)$--crystals,
\[\Std(\Omega) \cong \bigsqcup_{\mathcal{C}} \mathcal{B}_{{\bf \Lambda}(\mathcal{C})},\]
where the union is over the connected components of $\Std(\Omega)$ and ${\bf \Lambda}(\mathcal{C})$ is the profile of $\mathcal{C}$.
\end{corollary}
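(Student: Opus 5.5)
This corollary should follow from Theorem~\ref{thm:S6.3} and Theorem~\ref{prop:quiverRSK-crystal-isom}, taken one component at a time.

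First, I would observe that $\Std(\Omega)$, with the weight \eqref{eqn:Admwtdef} and the operators $\mate_i^{(j)},\matf_i^{(j)}$, is a well-defined set with maps into $\Std(\Omega)\sqcup\{\o\}$. This is exactly Theorem~\ref{thm:S6.3}: applying an operator to an $\Omega$-admissible filling either returns $\o$ or stays admissible. Consequently the crystal graph of $\Std(\Omega)$ makes sense, and $\Std(\Omega)$ is the disjoint union of its connected components $\mathcal C$. Each component is stable under the operators by definition.

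Second, for each component $\mathcal C$, Theorem~\ref{prop:quiverRSK-crystal-isom} shows that $\operatorname{quiverRSK}|_{\mathcal C}$ is a crystal isomorphism onto $\mathcal B_{{\bf\Lambda}(\mathcal C)}$. Here ${\bf\Lambda}(\mathcal C)$ is the common profile, which is constant on $\mathcal C$ by Proposition~\ref{lem:matrix-quiver-crystal-ops-RSK}. In particular, each $\mathcal C$ satisfies the axioms of Definition~\ref{def:crystal}, since Proposition~\ref{prop:inj-implies-bij} transports them along the isomorphism.

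Third, I would check that the axioms of Definition~\ref{def:crystal} are local to components. Axiom (a) concerns pairs $x,y$ joined by an edge, and such pairs lie in a single component. Axiom (b) concerns root strings, which also stay inside one component. Hence $\Std(\Omega)$ is itself a $GL(\d)$-crystal. Taking the disjoint union of the componentwise isomorphisms then gives a bijection
\[
\bigsqcup_{\mathcal C}\operatorname{quiverRSK}|_{\mathcal C}:\Std(\Omega)\longrightarrow\bigsqcup_{\mathcal C}\mathcal B_{{\bf\Lambda}(\mathcal C)}.
\]
This bijection intertwines the operators and the weights, so it is a crystal isomorphism.

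I do not expect a genuine obstacle here. The only point needing care is the target: it must be the abstract disjoint union with one copy of $\mathcal B_{{\bf\Lambda}}$ for each component, not the image inside a single $\mathcal B_{\bf\Lambda}$. Different components with the same profile map to the same set of tableaux, so $\operatorname{quiverRSK}$ is not injective on all of $\Std(\Omega)$. Tagging each copy by its component resolves this.
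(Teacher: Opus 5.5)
Your proposal is correct and follows the paper's argument, which applies Theorem~\ref{prop:quiverRSK-crystal-isom} to each connected component of $\Std(\Omega)$. Your extra points make explicit what the paper leaves implicit: well-definedness of the operators via Theorem~\ref{thm:S6.3}, the fact that the crystal axioms are local to components, and the need to take an abstract disjoint union because $\operatorname{quiverRSK}$ is not injective on all of $\Std(\Omega)$.
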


\begin{proof}
By Theorem~\ref{prop:quiverRSK-crystal-isom}, every connected component $\mathcal{C}$ of $\Std(\Omega)$ maps isomorphically onto the highest--weight crystal $\mathcal{B}_{{\bf \Lambda}(C)}$.
\end{proof}

We are now ready to state our crystal-theoretic version of Theorem~\ref{thm:newrule}.

\begin{theorem}\label{thm:mainprecise}
Let ${\bf \Lambda}$ be a sequence of rational shapes. Then
$m_{\Omega,{\bf \Lambda}}$ counts the number of ${\bf M}\in \Std(\Omega)$ such that
${\sf profile}({\bf M})={\bf \Lambda}$ and $\mate_i^{(j)}({\bf M})=\o$ for all $i,j$.

In view of Corollary~\ref{cor:superlowertab}, this is equivalent to counting those ${\bf M}\in \Std(\Omega)$ 
with $\sf{profile}({\bf M})=\Lambda$ such that if 
\[\operatorname{quiverRSK}({\bf M})=(U^{(1)}\div V^{(1)},\ldots,U^{(n)}\div V^{(n)})\]
then each $U^{(j)}$ is supersemistandard and each $V^{(j)}$ is lowest-weight.
\end{theorem}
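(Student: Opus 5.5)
The strategy is to compare two expressions for the $GL(\d)$-character of $\C[\Omega]$ and then use linear independence of irreducible characters. By Corollary~\ref{cor:crystal-decomp-quiver-locus}, the crystal $\Std(\Omega)$ is a disjoint union of highest-weight crystals $\mathcal B_{{\bf\Lambda}(\mathcal C)}$. Each component $\mathcal C$ contains exactly one element killed by every $\mate_i^{(j)}$, namely the preimage under $\operatorname{quiverRSK}|_{\mathcal C}$ of the unique highest-weight element of $\mathcal B_{{\bf \Lambda}(\mathcal C)}$. Consequently
\[
\sum_{{\bf M}\in\Std(\Omega)} x^{\operatorname{wt}({\bf M})}=\sum_{\bf\Lambda} c_{\bf\Lambda}\,\operatorname{char}V_{\bf\Lambda},
\]
where $c_{\bf\Lambda}$ is the number of ${\bf M}\in\Std(\Omega)$ with ${\sf profile}({\bf M})={\bf\Lambda}$ and $\mate_i^{(j)}({\bf M})=\o$ for all $i,j$. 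Here we use that the character of $\mathcal B^{(d)}_{\gamma}$ is the rational character of highest weight $\gamma$ (Kwon), and that characters multiply for Cartesian products. A small check is needed that each component is finite, so that the generating function makes sense degree by degree. This holds because the crystal operators preserve the block sums, and for fixed block sums there are only finitely many matrices.

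The second step identifies the left-hand side with $\operatorname{char}\C[\Omega]$, up to the sign and dual conventions coming from the contragredient action. This is the positive character formula, Theorem~\ref{thm:poschar}, promised in the introduction. Its proof goes through the Kinser--Rajchgot reduction to the bipartite quiver $\widetilde Q$, the Woo--Yong Gr\"obner degeneration of the Kazhdan--Lusztig ideal to the antidiagonal monomial ideal, and the KKR quotient formula. One keeps the fine multigrading by position in the $Q$-shape. The standard monomials of the antidiagonal ideal are exactly fillings with no long (phantom) antidiagonal. Extracting the coefficient in which each phantom variable appears exactly once should then force the zero-on-phantom condition and produce a sum over $\Std(\Omega)$. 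I expect this to be the main obstacle. The difficulty is justifying that the ``every phantom position exactly once'' extraction, followed by specialization to torus weights, is exactly the KKR quotient. One must also check that the weight of a monomial matches $\operatorname{wt}({\sf read}_Q({\bf M}))$: an unbarred letter $r$ from a tail gives $+\varepsilon_r$, a barred $\overline c$ from a head gives $-\varepsilon_c$, and this sign is the dual action. I would first verify this in the $A_2$ and $A_3$ bipartite cases against the example, and then argue locally block by block.

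With both expansions in hand, linear independence of irreducible rational $GL(\d)$-characters gives $m_{\Omega,\bf\Lambda}=c_{\bf\Lambda}$, which is the first assertion. For the equivalent reformulation, ${\bf M}$ is highest-weight exactly when $\operatorname{quiverRSK}({\bf M})$ is highest-weight in $\mathcal B_{\bf\Lambda}$, since $\operatorname{quiverRSK}|_{\mathcal C}$ is a crystal isomorphism by Theorem~\ref{prop:quiverRSK-crystal-isom}. In a Cartesian product crystal this means each factor is highest-weight. Corollary~\ref{cor:superlowertab} then translates this into $U^{(j)}$ being supersemistandard and $V^{(j)}$ being lowest-weight. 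Finally, one confirms compatibility with Theorem~\ref{thm:newrule}: ordinification, via Lemma~\ref{lem:duality-shift-equiv} and Lemma~\ref{lem:col-ins-row-ins}, converts these rational tableaux into the straight tableaux $T_i$ of $\operatorname{read}_Q$, with the shift $\Delta_i$ accounting for the barred letters.
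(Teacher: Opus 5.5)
Your proposal is correct and is essentially the paper's argument. The paper also combines the positive character formula of Theorem~\ref{thm:poschar} with the crystal decomposition of Corollary~\ref{cor:crystal-decomp-quiver-locus}; that formula is proved as you sketch, by keeping the fine multigrading of the Woo--Yong degeneration, extracting the coefficient of the product of phantom variables so only good pipe dreams survive, and specializing via \cite{KKR}. It then uses linear independence of irreducible characters and the unique highest-weight element of each $\mathcal B_{\bf\Lambda}$ to count, with the tableau reformulation following from Corollary~\ref{cor:superlowertab} exactly as you describe.
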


\begin{corollary}\label{cor:nonzeroness}
If there exists ${\bf M}\in\Std(\Omega)$ with ${\sf profile}({\bf M})={\bf \Lambda}$ then
$m_{\Omega,{\bf \Lambda}}>0$.
\end{corollary}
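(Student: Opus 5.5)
The plan is to derive Corollary~\ref{cor:nonzeroness} from Theorem~\ref{thm:mainprecise} together with the component structure of $\Std(\Omega)$.

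Suppose ${\bf M}\in\Std(\Omega)$ has ${\sf profile}({\bf M})={\bf \Lambda}$, and let $\mathcal C=\mathcal C({\bf M})$ be its connected component. This component is well defined by Theorem~\ref{thm:S6.3}.

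\emph{Step 1.} By Theorem~\ref{prop:quiverRSK-crystal-isom}, the restriction $\operatorname{quiverRSK}|_{\mathcal C}:\mathcal C\to\mathcal B_{\bf \Lambda}$ is a crystal isomorphism. Moreover, every ${\bf N}\in\mathcal C$ has ${\sf profile}({\bf N})={\bf \Lambda}$, by Proposition~\ref{lem:matrix-quiver-crystal-ops-RSK}.

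\emph{Step 2.} The crystal $\mathcal B_{\bf \Lambda}$ is a Cartesian product of rational tableau crystals $\mathcal B^{(d_j)}_{\gamma_j}$. Each factor contains a highest-weight element by Corollary~\ref{cor:superlowertab}: take $U$ supersemistandard of shape $\lambda$ and $V$ the lowest-weight tableau of shape $\mu$. The Stembridge constraint holds for this pair, since by Proposition~\ref{prop:tableau-shift-isomorphism} the shift map identifies $\mathcal B_{\lambda\div\mu}$ with a straight-shape crystal, which has a unique highest-weight element. Hence $\mathcal B_{\bf \Lambda}$ has an element $T$ killed by every $\tabe_i^{(j)}$.

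\emph{Step 3.} Pull $T$ back to ${\bf N}:=(\operatorname{quiverRSK}|_{\mathcal C})^{-1}(T)\in\mathcal C\subseteq\Std(\Omega)$. Crystal isomorphisms intertwine the raising operators, so $\mate_i^{(j)}({\bf N})=\o$ for all $i,j$. Also ${\sf profile}({\bf N})={\bf \Lambda}$ by Step 1. Theorem~\ref{thm:mainprecise} then gives $m_{\Omega,{\bf \Lambda}}\ge 1$.

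No step is a real obstacle; the whole argument is bookkeeping. The one point needing care is the existence of a highest-weight element in each rational factor. The cleanest way to see it is to shift to a straight shape and invoke the standard fact that $\mathcal B_\lambda$ has the supersemistandard tableau as its highest-weight element.
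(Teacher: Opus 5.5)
Your proposal is correct and follows essentially the same route as the paper: take the component $\mathcal C$ of ${\bf M}$, use the crystal isomorphism $\mathcal C\cong\mathcal B_{\bf\Lambda}$ (Theorem~\ref{prop:quiverRSK-crystal-isom}, Corollary~\ref{cor:crystal-decomp-quiver-locus}) to get a highest-weight element of $\mathcal C$ with profile ${\bf\Lambda}$, and conclude by Theorem~\ref{thm:mainprecise}. Your Step~2 spells out the existence of a highest-weight element in $\mathcal B_{\bf\Lambda}$, which the paper takes for granted. Your shift-based argument for it is fine, though it is simpler to note that $\mathcal B_{\bf\Lambda}$ is finite and each raising operator strictly increases the weight.
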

\begin{proof}
Let ${\mathcal C}$ be the connected component containing ${\bf M}$.
By Corollary~\ref{cor:crystal-decomp-quiver-locus}, ${\mathcal C}\cong {\mathcal B}_{{\bf \Lambda}}$,
and hence ${\mathcal C}$ has a highest-weight element ${\bf M}'$ of
profile ${\bf \Lambda}$. By Theorem~\ref{thm:mainprecise}, $m_{\Omega,{\bf \Lambda}}>0$.
\end{proof}

The admissible matrices ${\bf M}$ satisfying Theorem~\ref{thm:mainprecise} are precisely those counted in
Theorem~\ref{thm:newrule}, once accounting for ordinification. The proof of both statements requires a \emph{monomial-positive} character
formula for ${\mathbb C}[\Omega]$, namely Theorem~\ref{thm:poschar} in the next section.

\section{Gr\"obner degeneration and the admissible matrix character formula}\label{sec:Hilbertseries}

This section proves a positive combinatorial rule for the character 
$\operatorname{char}_{T(\d)}({\mathbb C}[\Omega])$ of ${\mathbb  C}[\Omega]$ for the standard torus $T(\d)\leq GL(\d)$
consisting of invertible diagonal matrices
\[t=(t^{(i)})_{i\in Q_0}, \qquad t^{(i)}=\operatorname{diag}(t_1^{(i)},\ldots,t_{d_i}^{(i)}).\]
View ${\bf M}=(M_e)_{e\in Q_1}$ as an exponent matrix of a monomial in the polynomial ring
\[{\mathbb C}[\rep_Q(\d)]\cong {\mathbb C}[z_{cr}^{(e)}: e\in Q_1, 1\leq r\leq d_{h(e)}, 1\leq c\leq d_{t(e)}].\]
Here $z^{(e)}_{cr}$ denotes the variable occupying row $c$ and
column $r$ in the $Q$-shape; equivalently, it is the
coordinate function on the $(r,c)$-entry of the usual matrix in
$\Mat_{\d(h(e)),\d(t(e))}$. 
Each ${\bf M}$, as identified with the monomial 
\[z^{\bf M}:=\prod_{e\in Q_1}\prod_{c,r}
\left(z^{(e)}_{cr}\right)^{M_e(c,r)},\]
 is a weight vector for the action of $T(\d)$ on
${\mathbb C}[\rep_Q(\d)]$, 
the contragredient action to \eqref{eqn:bcgaction}. 

For each vertex $i\in Q_0=[1,n]$, introduce Laurent variables $\{{\sf z}_1^{(i)},\ldots,{\sf z}_{d_i}^{(i)}\}$.
This action 
assigns $z_{cr}^{(e)}$ the character ${\sf z}_c^{(t(e))}/{\sf z}^{(h(e))}_{r}$. Hence, ${\bf M}$ has weight
\[\operatorname{wt}_{T(\d)}({\bf M})=\prod_{e\in Q_1} 
\prod_{r=1}^{\d(h(e))} \prod_{c=1}^{\d(t(e))} \left({\sf z}_c^{(t(e))}/{\sf z}_r^{(h(e))}\right)^{M_e(c,r)}, \qquad
M_e=(M_e(c,r))_{1\leq c\leq \d(t(e)), 1\leq r\leq \d(h(e))}.\]
Equivalently, the exponent vector of
$\operatorname{wt}_{T({\bf d})}({\bf M})$ is
$\operatorname{wt}({\bf M})$ as defined in \eqref{eqn:Admwtdef}: an entry $M_e(c,r)$ contributes
$\varepsilon_c^{(t(e))}-\varepsilon_r^{(h(e))}$ to $\operatorname{wt}({\bf M})$.
 
\begin{theorem}
\label{thm:poschar}
\[\operatorname{char}_{T(\d)}({\mathbb C}[\Omega])=\sum_{{\bf M}\in \Std(\Omega)} \operatorname{wt}_{T(\d)}({\bf M}).\]
\end{theorem}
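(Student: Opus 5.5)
The plan is to follow the strategy sketched in the introduction: reduce to the bipartite case via Kinser--Rajchgot, Gröbner-degenerate with Woo--Yong, and then extract the phantom-linear part of the finely graded Hilbert series.

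\emph{Step 1: reduction to the bipartite quiver.} Let $\widetilde\Omega\subseteq\rep_{\widetilde Q}(\widetilde{\d})$ be the orbit closure for the bipartite quiver $\widetilde Q$. Its defining representation is the one obtained from $\Omega$'s partial permutation representative by inserting the identity maps $I_{d_i}$. By \cite{KinserRajchgot}, $\Omega\times GL$ (a product of the groups at the new vertices) is isomorphic to the open subset of $\widetilde\Omega$ where the phantom maps are invertible. Equivalently, $\C[\Omega]$ is obtained from $\C[\widetilde\Omega]$ by setting each phantom block equal to the identity, and this specialization is compatible with the torus actions once we identify the torus at a new vertex with the torus at the vertex it duplicates.

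\emph{Step 2: Gröbner degeneration.} By \cite{WooYong}, as used in \cite{KKR}, $\widetilde\Omega$ is a Kazhdan--Lusztig variety. Its rank conditions $r_{[i,j]}$ degenerate, under an antidiagonal term order, to the monomial ideal generated by antidiagonal products of length $r_{[i,j]}+1$ in $\mathcal R_{[i,j]}$. This degeneration is flat, and it preserves the fine $\mathbb Z^{\#\text{positions}}$-grading when each variable of the $Q$-shape is given its own degree. The standard monomials are then exactly the fillings of the whole $Q$-shape, phantom regions included, with no long antidiagonal in any $\mathcal R_{[i,j]}$.

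\emph{Step 3: extracting the contribution of $\Omega$.} This is the step I expect to be the main obstacle. Following the \cite{KKR} quotient formula, set each phantom variable $z_{kk}$ to $1$ on the diagonal and $0$ off it. At the level of Hilbert series this means extracting, from the finely graded series, the coefficient of monomials that are "linear in the phantom diagonal", and only then specializing to torus weights. More precisely, the phantom substitution $I+(\text{generic})$ should correspond, after the degeneration, to the terms in which every phantom diagonal position occurs exactly once and the off-diagonal phantom positions do not occur.

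Granting this, a standard monomial of that form restricts outside the phantom regions to a filling $\mathbf M$. The antidiagonal-free condition on the full monomial translates precisely into the condition that every antidiagonal of $\mathbf M$ extended through phantom positions, i.e.\ every phantom antidiagonal in the paper's sense, has length at most $r_{[i,j]}$. Since any phantom position can be used freely, "uses phantom positions" replaces "uses phantom diagonal entries" here. This is the $\Omega$-admissibility condition.

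Justifying the extraction requires a Hilbert-series argument. The numerator $K$-polynomial of $\widetilde\Omega$ is multiplied by the localization in the phantom directions, and the coefficient of $\prod_k z_{kk}^{1}$ must be shown to compute $\C[\Omega]$. I would prove this via the \cite{KKR} quotient formula for the $K$-polynomial, checking that it coincides with this coefficient extraction on fine gradings. The delicate point is handling the positions of the phantom square other than the diagonal, which must be shown to contribute nothing.

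\emph{Step 4: specialization to torus weights.} Finally, specialize to $T(\d)$. Each surviving entry $M_e(c,r)$ contributes $\varepsilon_c^{(t(e))}-\varepsilon_r^{(h(e))}$, and the phantom diagonal entries contribute trivially, since each pairs $\varepsilon_k$ with $-\varepsilon_k$ at identified vertices. This yields $\sum_{\mathbf M\in\Std(\Omega)}\wt_{T(\d)}(\mathbf M)$.
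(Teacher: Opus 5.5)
\textbf{There is a genuine gap in Step~3: you extract the wrong coefficient.}

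Your rule keeps the diagonal phantom boxes at exponent one and the off-diagonal phantom boxes at exponent zero. This rule does not compute $\operatorname{char}_{T(\mathbf d)}\mathbb C[\Omega]$, and it does not produce $\Omega$-admissibility. An antidiagonal meets the diagonal of a phantom square in at most one box. So your standard monomials only forbid long antidiagonals that pass through at most one box of each phantom block. The sentence ``since any phantom position can be used freely'' has no basis once the off-diagonal boxes are absent from the support.

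Here is a concrete failure. Take $\mathbf d=(1,3,1)$, equioriented, with $L_2L_1=0$ and both ranks equal to $1$. Then $\mathbb C[\Omega]=\mathbb C[x_1,x_2,x_3,y_1,y_2,y_3]/(x_1y_1+x_2y_2+x_3y_3)$. The $Q$-shape is the $4\times 4$ array $\left(\begin{smallmatrix}P&y\\ x&0\end{smallmatrix}\right)$, where $P$ is the $3\times 3$ phantom block. The only generator of the initial ideal is the antidiagonal product $y_1p_{23}p_{32}x_1$. If $p_{23}$ and $p_{32}$ are held at exponent $0$, nothing is excluded, and you get the character of a polynomial ring in six variables. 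The true character excludes the multiples of $x_1y_1$.

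\textbf{The repair, which is the paper's route, is to extract the coefficient of the product of \emph{all} phantom variables.} Every phantom box, diagonal or not, must occur exactly once. After the phantom part is erased, these standard monomials are exactly $\operatorname{Adm}(\Omega)$. This is why a phantom antidiagonal may use any phantom box.

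The justification you left open is then short:
\begin{enumerate}
\item Write the fine Hilbert series of $R/I'$ using the Woo--Yong pipe-dream formula, $\sum_{\mathcal P}\pm\operatorname{pwt}(\mathcal P)/\prod(1-{\sf z}^{(i)}_{cr})$.
\item For a pipe dream with no $+$ in any phantom box, the phantom geometric series contribute coefficient $1$.
\item For a pipe dream with a $+$ at a phantom box $(c,r)$, the factor $1-{\sf z}_{cr}$ cancels against the denominator. The resulting term is independent of ${\sf z}_{cr}$, so its ${\sf z}_{cr}$-linear coefficient vanishes.
\item What remains is the good-pipe-dream expression of \cite{KKR}. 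Under ${\sf z}^{(i(e))}_{cr}\mapsto{\sf z}^{(t(e))}_c/{\sf z}^{(h(e))}_r$ it specializes to the character.
\end{enumerate}

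Under your exponent-zero rule for off-diagonal boxes, the bad pipe dreams whose phantom $+$'s all lie off the diagonal are not killed. These are exactly the terms that break the example above.

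The slice heuristic of Step~1 (``set the phantom block to $I$'') is what misleads you. That slice is not a coefficient extraction on fine Hilbert series. Moreover, after identifying the tori the diagonal phantom variables have weight $1$. So the extraction must be carried out in the fine grading before any specialization to $T(\mathbf d)$-weights.
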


Theorem~\ref{thm:poschar} is the first monomial-positive formula for this character. Earlier work of Kinser--Knutson--Rajchgot \cite{KKR} uses the \emph{Kinser--Rajchgot reduction} \cite{KinserRajchgot} 
to the bipartite case to express the multigraded Hilbert series of $\mathbb C[\Omega]$, i.e., 
$\operatorname{char}_{T(\d)}({\mathbb C}[\Omega])$,
as a quotient, using a result of Woo and the third author \cite{WooYong} on \emph{Kazhdan-Lusztig ideals} \cite{WooYong:JA}. We derive Theorem~\ref{thm:poschar} using those results of \cite{KKR, WooYong}.

\subsection{Kazhdan-Lusztig ideals}\label{sec:KLideals}
Given $w\in S_N$, define the matrix $Z_w\in \mathrm{Mat}_{N,N}$ where:
\begin{itemize}
\item $1$'s are in matrix positions $(p,w(p))$ for $1\leq p\leq N$;
\item $0$'s are below and in the same column, or to the right of and in the same row, as a $1$;
\item $z_{pq}$ are in other positions $(p,q)$.
\end{itemize}
Let ${\mathbb C}[Z_w]$ be the polynomial ring in these $\ell(w)$ indeterminates $z_{pq}$.

Given $v\in S_N$, let $P_v$ be the permutation matrix with $1$'s in $(p,v(p))$ and $0$ elsewhere.
The \emph{rank function} 
$r_v:[N]^2\to {\mathbb Z}_{\geq 0}$ 
is defined by setting $r_v(p,q)$ to be the
number of $1$'s in $P_v$ in the northwest $p\times q$ region.
The \emph{Kazhdan-Lusztig ideal} \cite{WooYong:JA} is defined as
\begin{equation}\label{eqn:defininggens}
I_{w,v}\!=\!\langle\text{$r_v(p,q)\!+\!1$ size minors of the northwest $p\!\times\! q$ submatrix
of $Z_w$}\rangle.
\end{equation}
 It defines an irreducible affine variety ${\mathcal N}_{w,v}$, the \emph{Kazhdan-Lusztig variety}. We have $v\leq w$ in Bruhat order if and only if ${\mathcal N}_{w,v}\neq \emptyset$. 

Let $\prec_w$ be the antidiagonal lexicographic term order on the monomials in ${\mathbb C}[Z_w]$ defined by favoring variables
in columns further to the right and further north in a column. 
The \emph{initial ideal} $\operatorname{init}_{\prec_w}(I_{w,v})$ is generated by the $\prec_w$ 
leading terms of every $f\in I_{w,v}$. The 
$\prec$-\emph{standard basis} of 
${\mathbb C}[{\mathcal N}_{w,v}]\cong {\mathbb C}[Z_{w}]/I_{w,v}$ 
as a ${\mathbb C}$-vector space consists of monomials (taken to have coefficient $1$) not 
in $\operatorname{init}_{\prec_w}(I_{w,v})$, or equivalently, not divisible by the lead term of any generator in a Gr\"obner basis
of $I_{w,v}$ with respect to $\prec_w$.  We will need:

\begin{theorem}[\!\!\cite{WooYong}]\label{thm:GBresult}
The defining generators \eqref{eqn:defininggens}
of $I_{w,v}$ form a $\prec_w$-Gr\"obner basis. 
\end{theorem}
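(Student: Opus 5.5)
We prove Theorem~\ref{thm:GBresult}, that the defining generators \eqref{eqn:defininggens} of $I_{w,v}$ form a $\prec_w$-Gr\"obner basis, by the standard degeneration argument for determinantal-type ideals. The argument has three steps: identify the initial terms of the generators, compute the Hilbert series of the resulting monomial ideal combinatorially, and compare that with the Hilbert series of $\mathbb C[\mathcal N_{w,v}]$. Let $J_{w,v}$ be the monomial ideal generated by the $\prec_w$-leading terms of the generating minors. By construction $J_{w,v}\subseteq \operatorname{init}_{\prec_w}(I_{w,v})$. Since the two ideals have the same Hilbert series exactly when they are equal, it suffices to prove
\[
\operatorname{Hilb}(\mathbb C[Z_w]/J_{w,v}) = \operatorname{Hilb}(\mathbb C[Z_w]/I_{w,v}).
\]
This equality gives the Gr\"obner basis property.

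\emph{Step 1: leading terms.} The term order $\prec_w$ favors variables further right and, within a column, further north. Under this order the leading term of a minor of $Z_w$ is its antidiagonal product, after deleting the $1$'s, which are units. A minor whose antidiagonal meets a $0$ entry of $Z_w$ needs separate handling: either its leading term is the product over another permutation, or it is redundant because it lies in the ideal of smaller minors of a smaller northwest rectangle. Carrying this out, we expect $J_{w,v}$ to be the Stanley--Reisner ideal of a subword complex in the sense of Knutson--Miller. Its facets should be the reduced subwords of a fixed word for $w$ that multiply, in Demazure product, to $v$. The monomials outside $J_{w,v}$ are then supported on faces of this complex.

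\emph{Step 2: the initial side.} Subword complexes are vertex-decomposable balls or spheres. This gives an explicit shelling-type formula for $\operatorname{Hilb}(\mathbb C[Z_w]/J_{w,v})$ as a sum over facets. One can check that this sum equals the $\mathbb Z^N$-graded Grothendieck-type expression for the Kazhdan--Lusztig variety, namely a double Grothendieck polynomial $\mathfrak G_v$ restricted to the slice $Z_w$ and divided by the usual denominator.

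\emph{Step 3: the variety side.} The Kazhdan--Lusztig variety $\mathcal N_{w,v}$ is isomorphic to the intersection of the Schubert variety $X_v$ with the opposite Schubert cell $X^\circ_w$. This is the Kazhdan--Lusztig lemma, and it identifies $X^\circ_w$ with the affine space $Z_w$. By Fulton's essential-set equations, $I_{w,v}$ is the defining ideal of this intersection, which is reduced and Cohen--Macaulay. Its $K$-class restricts from the class of $X_v$, so its Hilbert series is given by the same double Grothendieck expression. Comparing with Step 2 gives the Hilbert series equality.

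The main obstacle is Step~1 together with its matching in Step~2. We must show that the leading terms of the possibly degenerate minors, which involve entries equal to $1$ or $0$, generate exactly the subword-complex Stanley--Reisner ideal, with no extra generators and no missing ones. Our first approach is an induction on the length of $w$ using the degeneration $w\mapsto ws_i$. Under this operation $Z_{ws_i}$ is $Z_w$ with one new variable. We would track both sides through a Knutson--Miller ``geometric vertex decomposition'' with respect to that variable, and check that the initial ideal decomposes as the link ideal plus the variable times the deletion ideal. This parallels the decomposition of the subword complex.
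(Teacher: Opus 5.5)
The paper gives no proof of this statement; it is quoted from \cite{WooYong}. There, as I understand it, the geometric input is taken from Knutson's work: that a suitable initial ideal of $I_{w,v}$ is the Stanley--Reisner ideal $I_\Delta$ of a subword complex. The substance of \cite{WooYong} is the combinatorial check that the $\prec_w$-leading terms of the minors in \eqref{eqn:defininggens} generate $I_\Delta$.

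Your Hilbert-series frame is a reasonable substitute for that geometric input, and it needs only one containment. Let $R=\mathbb C[Z_w]$ and use the $T$-grading. This grading is positive, since each $z_{pq}$ has weight $\varepsilon_p-\varepsilon_{w^{-1}(q)}$ with $p<w^{-1}(q)$, and $I_{w,v}$ is homogeneous for it. Suppose $I_\Delta\subseteq J_{w,v}$ and $\operatorname{Hilb}(R/I_\Delta)=\operatorname{Hilb}(R/I_{w,v})$. Then $\operatorname{Hilb}(R/J_{w,v})\le\operatorname{Hilb}(R/I_\Delta)=\operatorname{Hilb}(R/\operatorname{init}_{\prec_w}I_{w,v})\le\operatorname{Hilb}(R/J_{w,v})$ forces $J_{w,v}=\operatorname{init}_{\prec_w}I_{w,v}$. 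So ``no extra generators'' comes for free.

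The Hilbert-series identity is not something one simply checks. It requires three ingredients, which you should cite:
\begin{itemize}
\item the Knutson--Miller $K$-polynomial of a subword complex;
\item the $K$-theoretic localization formula of Graham and Willems, which expands $[\mathcal O_{X_v}]|_w$ over subwords;
\item an identification of the $T$-weight of each Rothe-diagram box with the root attached to the corresponding letter of the reduced word for $w$.
\end{itemize}
Also, the facets of the subword complex are the \emph{complements} of reduced subwords for $v$, not the subwords themselves.

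The genuine gap is the containment $I_\Delta\subseteq J_{w,v}$ itself. That is the entire content of the theorem, and your proposal only ``expects'' it. You must show the following: if a monomial is divisible by no $\prec_w$-leading term of a minor in \eqref{eqn:defininggens}, then the complement of its support in the Rothe diagram contains a reduced word for $v$. Here the complement is read as a subword of the associated reduced word for $w$. Proving this needs two things, and neither is supplied:
\begin{itemize}
\item a determination of the leading terms of minors that meet the $1$'s and $0$'s of $Z_w$;
\item a real combinatorial argument turning the avoidance of long antidiagonals in northwest rectangles into a Demazure-product statement.
\end{itemize}

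The sketched induction does not fill this gap, for three reasons.
\begin{itemize}
\item A geometric vertex decomposition compatible with the lex order $\prec_w$ must use the $\prec_w$-largest variable, namely the northmost variable in the rightmost column containing variables. So the step $w\mapsto ws_i$ would have to create exactly that box.
\item With $1$'s at $(p,w(p))$, passing to $ws_i$ swaps two rows of the template. Thus $Z_{ws_i}$ is $Z_w$ plus one variable only after relabeling rows, and under that relabeling $\prec_{ws_i}$ need not restrict to $\prec_w$.
\item Most seriously, running the decomposition on ideals alone would at best reprove the degeneration $\operatorname{init}_{\prec_w}(I_{w,v})=I_\Delta$. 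To get the Gr\"obner statement you would also need $\operatorname{in}_y$ of the explicit minors to produce the explicit defining minors of the link and deletion ideals. Those are Kazhdan--Lusztig ideals for other pairs, with different rank functions.
\end{itemize}
That generator-level compatibility is exactly the missing idea, and nothing in the proposal addresses it.
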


Since $\operatorname{init}_{\prec_w}(I_{w,v})$
is a monomial ideal, ${\mathbb C}[Z_{w}]/\operatorname{init}_{\prec_w}(I_{w,v})$ has a ${\mathbb Z}^{\ell(w)}$-multigrading
that assigns each $z_{pq}$ the weight ${\sf z}_{pq}$. We use this \emph{fine multigrading} in our derivation of Theorem~\ref{thm:poschar}. However, first, we need to study the bipartite lift $\widetilde \Omega$. 

\subsection{Relationship to $Q$-shapes of bipartite quivers}\label{subsec:theredux}
The work of \cite{KinserRajchgot} relates
${\bf M}$ as a  $Q$-shape to Kazhdan-Lusztig ideals. Assume the $A_n$
quiver $Q$ is \emph{bipartite}, i.e., 
there are $m+1$ source vertices labelled by the odd integers and $m$ sink vertices labeled by the even integers
alternating in $Q$; hence
$Q$ begins and ends with a source vertex and $n=2m+1$. The dimension vector is $\widetilde \d=(d_1,d_2,\ldots,d_{2m+1})$.
No replacement rules
are required to form ${\widetilde Q}=Q$. Place each of the variables $z_{cr}^{(i)}$ in the $\widetilde Q$-shape
(because of the transposed convention)
and complete this to a rectangular matrix $Z^{\widetilde Q}$ with $0$'s placed outside of the ${\widetilde Q}$-shape. This
$Z_{\widetilde Q}$ has $d_{\operatorname{odd}}:=d_1+d_3+\cdots+d_{2m+1}$ rows and $d_{\operatorname{even}}:=d_2+d_4+\cdots+d_{2m}$ columns. 

Let $\widetilde \Omega$ be a quiver locus in $\rep_{\widetilde Q}({\widetilde d})$, described by partial permutation
matrices $(L_1,\ldots,L_{2m})$.  Let $r_{[i,j]}$ be as in the introduction. 
Define an ideal 
$I_{\widetilde \Omega}\subset {\mathbb C}[{Z}_{\widetilde Q}]\cong {\mathbb C}[\rep_{\widetilde Q}({\widetilde \d})]$
generated by $r_{[i,j]}+1$ size minors of the smallest submatrix $Z^{\widetilde Q}_{[i,j]}$ of $Z^{\widetilde Q}$ containing ${\mathcal R}_{[i,j]}$.

By giving an equivariant isomorphism of ${\widetilde \Omega}$ with a Kazhdan-Lusztig variety ${\mathcal N}_{w,v}$ for certain
$w=w_{\widetilde \Omega}, v=v_{\widetilde\Omega} \in S_{N}$ depending on ${\widetilde \Omega}$ and $N=\sum_{i=1}^{2m+1} d_i$, Kinser--Rajchgot \cite{KinserRajchgot} prove:
\begin{theorem}[\!\!\!\cite{KinserRajchgot}]\label{thm:KRshortthm}
Let $\widetilde\Omega$ be a quiver locus in
$\rep_{\widetilde Q}(\widetilde{\d})$ for a bipartite quiver
$\widetilde Q$.
There is a $GL(\widetilde \d)$-equivariant isomorphism ${\mathbb C}[\widetilde \Omega] \cong {\mathbb C}[{Z}^{\widetilde Q}]/I_{\widetilde \Omega}$.
\end{theorem}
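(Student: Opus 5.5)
The statement is purely algebro-geometric and does not use the crystal machinery developed above. My plan is to realize $\widetilde\Omega$ inside a Kazhdan--Lusztig variety via a Zelevinsky-type map, and then transport primeness and the defining equations back. Throughout, $\widetilde Q$ is bipartite with sources $1,3,\ldots,2m+1$ and sinks $2,4,\ldots,2m$. A point of $\rep_{\widetilde Q}(\widetilde\d)$ is exactly a filling of the $\widetilde Q$-shape, i.e.\ a specialization of $Z^{\widetilde Q}$. The rank of the submatrix $Z^{\widetilde Q}_{[i,j]}$ is the rank of the induced map from the sum of the source spaces to the sum of the sink spaces incident to the arrows $i,\ldots,j$. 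Each of these ranks is invariant under $GL(\widetilde\d)$, since the group acts by block-diagonal row and column operations.

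\emph{Step 1 (set-theoretic description).} I first show that $\widetilde\Omega=\{Z:\rank Z^{\widetilde Q}_{[i,j]}\le r_{[i,j]} \text{ for all } i\le j\}$ as sets.
\begin{itemize}
\item Each invariant $\rank Z^{\widetilde Q}_{[i,j]}$ is lower semicontinuous, so it is bounded by $r_{[i,j]}$ on $\widetilde\Omega$. This gives ``$\subseteq$''.
\item For ``$\supseteq$'' I use the type $A$ structure. By Gabriel's theorem, orbits are indexed by multiplicities of interval modules $[a,b]$. By inclusion--exclusion these multiplicities are recovered from the numbers $r_{[i,j]}$, so the $r_{[i,j]}$ separate orbits.
\item The remaining input is that, for type $A$ quivers, orbit-closure containment coincides with the componentwise rank order. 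This is Abeasis--Del Fra--Kraft; alternatively it follows once Step 2 identifies the rank locus with an irreducible Kazhdan--Lusztig variety whose dense orbit is $\Omega^\circ$.
\end{itemize}

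\emph{Step 2 (the Zelevinsky/Kinser--Rajchgot embedding).} I construct $w=w_{\widetilde\Omega}$ and $v=v_{\widetilde\Omega}$ in $S_N$ with $N=\sum d_i$. The construction goes as follows.
\begin{itemize}
\item Arrange the $d_{\mathrm{odd}}\times d_{\mathrm{even}}$ matrix $Z^{\widetilde Q}$, together with identity blocks and zero blocks, into an $N\times N$ matrix. The free positions should be exactly the $\widetilde Q$-shape, and everything else should be forced to be $0$ or $1$ in the pattern of $Z_w$. This determines $w$ from the block sizes alone.
\item Choose $v$ so that $r_v$ restricted to the block corners equals $r_{[i,j]}$ plus the contribution of the identity blocks. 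The existence of such a permutation should follow from the interval-module decomposition: each summand $[a,b]$ contributes a pattern of $1$'s.
\item Check that conjugation by $GL(\widetilde\d)$, embedded block-diagonally, preserves all northwest rank conditions. This gives equivariance.
\end{itemize}

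\emph{Step 3 (equations and reducedness).} The Kazhdan--Lusztig ideal $I_{w,v}$ is prime: ${\mathcal N}_{w,v}$ is the intersection of a Schubert variety with an opposite cell, and these are reduced and irreducible. Theorem~\ref{thm:GBresult} even gives a Gr\"obner basis. It therefore suffices to show $I_{w,v}=I_{\widetilde\Omega}$ under the identification of Step 2, which I will do by comparing generators.
\begin{itemize}
\item A northwest $p\times q$ submatrix of $Z_w$ whose corner falls at block boundaries contains identity blocks. By Laplace expansion along those identity blocks, its $(r_v(p,q)+1)$-minors generate the same ideal as the $(r_{[i,j]}+1)$-minors of $Z^{\widetilde Q}_{[i,j]}$.
\item Corners falling strictly inside a block should give redundant conditions. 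The redundancy should come from Fulton's essential set: the essential boxes of $v$ ought to lie only at block corners.
\end{itemize}
Combining $I_{w,v}=I_{\widetilde\Omega}$ with primeness and Step 1 yields ${\mathbb C}[\widetilde\Omega]\cong{\mathbb C}[Z^{\widetilde Q}]/I_{\widetilde\Omega}$.

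I expect the main obstacle to be Step 3's essential-set bookkeeping. One must show that the essential set of $v$ avoids block interiors and that every remaining condition reduces exactly to a submatrix of the form $Z^{\widetilde Q}_{[i,j]}$. I would first try this by induction on $m$: peel off the last source--sink pair and check the new essential boxes directly.
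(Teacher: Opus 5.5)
Your overall architecture is the argument of \cite{KinserRajchgot} that the paper imports: realize $\widetilde\Omega$ as a Kazhdan--Lusztig variety, use primeness of $I_{w,v}$, match generators, and finish with Step~1 and the Nullstellensatz. The paper gives no proof of its own, only the description following the statement. However, Step~2 as you formulate it cannot be carried out.

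The free positions of $Z_w$ are the Rothe diagram of $w$. Its first row is the initial segment $\{(1,q):q<w(1)\}$, and column $w(1)$ has no cell below row~$1$. Suppose $\widetilde Q$ has at least two sinks (with nonzero dimensions) and the staircase $\widetilde Q$-shape is placed as a block matrix in the northwest corner. Then it violates this whichever way you order the block rows and block columns:
\begin{itemize}
\item In the paper's orientation, the top block row meets only the last block column, so its cells are not an initial segment.
\item If you reorder so that the top block row meets only the first block column, the $1$ of row~$1$ sits in the first column of the second block column. But the next block row has cells there.
\end{itemize}
So the $w$ you ask for does not exist. Kinser--Rajchgot instead take the dominant $w$ whose diagram is the whole $d_{\operatorname{odd}}\times d_{\operatorname{even}}$ rectangle. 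Then $Z_w=\left(\begin{smallmatrix} Z & I\\ I & 0\end{smallmatrix}\right)$ carries extra variables in the zero blocks of $Z^{\widetilde Q}$, and $v$ must kill them. Your Step~3 must therefore show that $I_{w,v}$ is generated by exactly two kinds of elements, as in the proof of Proposition~\ref{prop:degenstd}: (1) these outside variables, and (2) the $(r_{[i,j]}+1)$-minors of the $Z^{\widetilde Q}_{[i,j]}$. Your comparison never produces (1). Northwest conditions that run into the two identity blocks measure ranks of full-width row strips, full-height column strips, and southeast-justified blocks of the rectangle. That is how every $Z^{\widetilde Q}_{[i,j]}$ and every zero block becomes a northwest condition.

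Second, even with the correct $w$, what you leave as ``should'' and ``ought to'' is the real content of the theorem. You need a specific $v$, the Zelevinsky permutation of \cite{KMS,KinserRajchgot}. Its block counts of $1$'s are determined by the $r_{[i,j]}$; they are nonnegative by the interval-module decomposition, as you anticipate. Its $1$'s run northwest to southeast within every block row and every block column. You must then prove two things: that its essential set lies at block corners, and that each essential condition reduces to type (1) or (2). A $v$ chosen merely to match the block-corner ranks will in general have essential boxes inside blocks, and then $\mathcal N_{w,v}$ is not the orbit closure. Your proposed induction on $m$ has not yet engaged with this.

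Relatedly, your equivariance bullet is false as stated: block-diagonal $GL(\widetilde{\d})$ preserves northwest ranks only at block corners. It is also unnecessary. Once $I_{\widetilde\Omega}$ is known to be prime with zero set $\widetilde\Omega$, you get $I(\widetilde\Omega)=I_{\widetilde\Omega}$. This ideal is $GL(\widetilde{\d})$-stable because the rank conditions are, so equivariance is automatic.

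Step~1 is fine, though Abeasis--Del Fra--Kraft treat only the equioriented case. For other orientations, cite Abeasis--Del Fra or use your irreducibility alternative.
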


The proof of Theorem~\ref{thm:KRshortthm} from \cite{KinserRajchgot} (see also \cite{KKR}) shows there is a $GL(\widetilde \d)$-equivariant isomorphism ${\mathbb C}[\widetilde \Omega]\cong {\mathbb C}[Z_w]/I_{w,v}$ where $w=w_{\widetilde \Omega}$ is the dominant permutation
in $S_{d_1+\cdots +d_{2m+1}}$ whose Rothe diagram consists of a $d_{\operatorname{odd}}\times d_{\operatorname{even}}$ rectangle
to the northwest, and $v=v_{\tOmega}$ has a precise description omitted here. Thus, ${Z}_w$ is a copy of $Z^{\widetilde Q}$, with additional
variables outside of the ${\widetilde Q}$-shape, padded with $1$'s and $0$'s to the southeast.
We identify $z_{pq}=z_{cr}^{(i)}$ as appropriate inside the ${\widetilde Q}$-shape.

Let $\prec$ be the antidiagonal term order on the monomials in ${\mathbb C}[Z^{\widetilde Q}]$ that 
favors variables in columns further to the right and further north in a column of $Z^{\widetilde Q}$.
The following is a consequence of Theorem~\ref{thm:GBresult} and (the proof of) Theorem~\ref{thm:KRshortthm}.

\begin{proposition} \label{prop:degenstd}
The $\prec$-standard monomials of ${\mathbb C}[Z^{\widetilde Q}]/I_{\widetilde \Omega}$ are those 
monomials indivisible by any antidiagonal product of length $r_{[i,j]}+1$ from ${\mathcal R}_{[i,j]}$. 
\end{proposition}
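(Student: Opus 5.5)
The plan is to transport the Woo--Yong Gr\"obner basis from the Kazhdan--Lusztig ideal $I_{w,v}$ to $I_{\widetilde\Omega}$, using the explicit identification supplied by the proof of Theorem~\ref{thm:KRshortthm}.

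\textbf{Step 1: restriction of the Gr\"obner basis.} That proof gives ${\mathbb C}[Z_w]/I_{w,v}\cong{\mathbb C}[Z^{\widetilde Q}]/I_{\widetilde\Omega}$. Here $w$ is dominant with Rothe diagram the $d_{\rm odd}\times d_{\rm even}$ northwest rectangle. So $Z_w$ is a copy of $Z^{\widetilde Q}$ extended by further variables $z_{pq}$ in the rectangle outside the $\widetilde Q$-shape, with $1$'s and $0$'s to the southeast. I would first check that the defining minors \eqref{eqn:defininggens} of $I_{w,v}$ fall into two types.
\begin{itemize}
\item[(a)] Some essential rank conditions of $r_v$ force the variables outside the $\widetilde Q$-shape to vanish. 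These are $1\times 1$ minors with $r_v=0$, or they reduce to such modulo the other generators.
\item[(b)] The remaining conditions, on northwest submatrices meeting the $\widetilde Q$-shape, become, after killing those variables, exactly the $(r_{[i,j]}+1)$-minors of $Z^{\widetilde Q}_{[i,j]}$.
\end{itemize}
This is the content of ``$v$ has a precise description'' in \cite{KinserRajchgot,KKR}. I would take that description as given and verify that the rank $r_v$ on the northwest box cut out by ${\mathcal R}_{[i,j]}$ equals $r_{[i,j]}$.

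\textbf{Step 2: passing to the quotient.} Theorem~\ref{thm:GBresult} says all these minors form a $\prec_w$-Gr\"obner basis. The order $\prec_w$ restricted to variables of $Z^{\widetilde Q}$ is $\prec$, since both favor right columns and then north positions. The type-(a) generators are single variables $z_{pq}$ (or have initial terms involving them). Setting those variables to zero is compatible with the Gr\"obner property: a Gr\"obner basis containing variables stays Gr\"obner after quotienting by those variables. Hence the images of the type-(b) minors form a $\prec$-Gr\"obner basis of $I_{\widetilde\Omega}$.

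\textbf{Step 3: reading off standard monomials.} The $\prec$-leading term of a minor of $Z^{\widetilde Q}_{[i,j]}$ is its antidiagonal product. Under the identification, a $Q$-shape entry $z^{(e)}_{cr}$ sits so that ``northeast to southwest'' in the matrix matches the antidiagonal convention of the introduction. Antidiagonals passing through zero positions of $Z^{\widetilde Q}_{[i,j]}$ outside ${\mathcal R}_{[i,j]}$ give zero products and are discarded. So the initial ideal is generated by the antidiagonal products of length $r_{[i,j]}+1$ lying in ${\mathcal R}_{[i,j]}$, and the standard monomials are exactly the monomials divisible by none of these.

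\textbf{Main obstacle.} The hard part is Step 1: matching $r_v$ with the $r_{[i,j]}$, and showing that the extra variables are killed in a Gr\"obner-compatible way. This requires the explicit form of $v_{\widetilde\Omega}$. I would first try to argue that it is set up precisely so that its rank function is $0$ on the complement of the $\widetilde Q$-shape within the rectangle, and equals $r_{[i,j]}$ on the boxes bounding each ${\mathcal R}_{[i,j]}$.
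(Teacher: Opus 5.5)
Your proposal is correct and is essentially the paper's proof: transport the Woo--Yong Gr\"obner basis of $I_{w_{\widetilde\Omega},v_{\widetilde\Omega}}$ through the Kinser--Rajchgot identification, split the generators into the variables outside the $\widetilde Q$-shape and minors that reduce to $(r_{[i,j]}+1)$-minors of $Z^{\widetilde Q}_{[i,j]}$, and use that $\prec_w$ restricts to $\prec$, taking the construction of $v_{\widetilde\Omega}$ from \cite{KinserRajchgot,KKR} as a black box exactly as the paper does.

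Two points need tightening if you write it out. First, since $Z_w=\left(\begin{smallmatrix}\ast & I\\ I & 0\end{smallmatrix}\right)$, regions such as $\mathcal R_{[1,1]}$ are not cut out by any northwest box of the variable rectangle; their conditions come from northwest boxes that run into the identity blocks, where $r_v$ exceeds $r_{[i,j]}$ by the number of enclosed $1$'s, so your planned check in the ``main obstacle'' would fail as literally stated. Second, in Step~3 take leading terms in $\mathbb C[Z_w]$ before killing the outside variables, where they are genuine antidiagonals and those meeting an outside variable are redundant by your Step~2 argument; after specialization, a zero antidiagonal product would not by itself show that a minor contributes nothing new to the initial ideal.
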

\begin{proof}
By Theorem~\ref{thm:GBresult}, $I_{w_{\widetilde \Omega},v_{\widetilde \Omega}}$ has a $\prec_w$-Gr\"obner basis
given by its defining generators. By the construction of $v=v_{\tOmega}$, 
these generators consist of (1) the variables of ${\mathbb C}[Z_w]$ 
outside of the ${\widetilde Q}$-shape, and (2) the size $r_{[i,j]}+1$
determinants on the smallest submatrix of $Z_w$ that contains ${\mathcal R}_{[i,j]}$, and modulo the generators of type (1) these are minors of $Z^{\widetilde Q}_{[i,j]}$. 
In other words, there is an isomorphism
\[{\mathbb C}[Z_{w_{\widetilde \Omega}}]/I_{w_{\widetilde \Omega},v_{\widetilde \Omega}}\cong {\mathbb C}[Z^{\widetilde Q}]/I_{\widetilde \Omega}\] sending $z_{pq}$ to $z_{cr}^{(i)}$ occupying the corresponding position in $Z_{\widetilde Q}$. Since the term orders $\prec_w$ and $\prec$ agree 
on the monomials involving only ${\widetilde Q}$-shape, the result follows.
\end{proof}

\subsection{Fine grading and phantom coefficient extraction}

Let $I:=I_{\widetilde \Omega}\subseteq {\mathbb C}[Z^{\widetilde Q}]=:R$. We start working with its Gr\"obner degeneration 
$I':=\operatorname{init}_{\prec} I\subseteq R$. Consider the fine multigrading on $R$ that assigns
$z_{cr}^{(i)}$ the weight ${\sf z}_{cr}^{(i)}$. 
Since $I'$ is a monomial ideal, it is homogeneous for this grading.

Suppose ${\widetilde Q}$ is the modification of a  (possibly not bipartite) quiver $Q$ (if needed, add terminal source vertices of dimension $0$ with trivial maps without loss); $\tOmega$ is defined by the same partial permutation matrices as $\Omega$, together with the added identity matrices. 
Given ${\bf M}=(M_1,\ldots,M_{2m})$ thought of as a ${\widetilde Q}$-shape matrix, let
\[\operatorname{mwt}({\bf M})=\prod_{i=1}^{2m}\prod_{(c,r)\in {\mathcal R}_i}({\sf z}_{cr}^{(i)})^{M_i(c,r)}.\]

\begin{lemma}\label{prop:Jun18aaa}
The Hilbert series for $R/I'$ with respect to the fine multigrading is
\begin{equation}\label{eqn:Apr3-2}
\operatorname{Hilb}(R/I',{\sf z}) =\sum_{{\bf M}} \operatorname{mwt}({\bf M}), 
\end{equation}
where in \eqref{eqn:Apr3-2} the sum is over all ${\bf M}$ such that $z^{\bf M}$ is
a standard monomial, i.e., $z^M\not\in I'$.
\end{lemma}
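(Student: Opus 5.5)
The plan is to use the standard fact that, for a monomial ideal, the standard monomials form a $\mathbb C$-basis of the quotient. Each basis element is homogeneous for the fine multigrading. Summing their weights then gives the Hilbert series.

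First, recall that $I'=\operatorname{init}_{\prec} I$ is a monomial ideal in $R$. Hence $I'$ is spanned as a $\mathbb C$-vector space by the monomials it contains. Indeed, a polynomial lies in a monomial ideal if and only if each of its terms does. From this, the images in $R/I'$ of the monomials $z^{\bf M}\notin I'$ form a $\mathbb C$-basis of $R/I'$.
\begin{itemize}
\item They span, since any $f\in R$ is congruent modulo $I'$ to the sum of its terms not lying in $I'$.
\item They are independent, since a nontrivial linear combination of monomials outside $I'$ cannot lie in $I'$, by the termwise criterion.
\end{itemize}

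Second, check that these monomials are homogeneous for the fine multigrading. Under this grading, $z^{\bf M}=\prod (z^{(i)}_{cr})^{M_i(c,r)}$ has degree exactly $\operatorname{mwt}({\bf M})$. Distinct exponent matrices ${\bf M}$ give distinct fine degrees, since the grading records every variable separately. So each graded piece of $R/I'$ has dimension $1$ if its degree is $\operatorname{mwt}({\bf M})$ for some standard ${\bf M}$, and $0$ otherwise. Summing the dimensions times the degree monomials gives \eqref{eqn:Apr3-2}, with convergence understood as formal power series. Each graded piece is finite-dimensional because $R$ is a polynomial ring with its fine grading.

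There is no real obstacle here. The only point requiring care is the termwise membership criterion for monomial ideals, which is standard (e.g.\ via Dickson's lemma or the direct argument with generators). Proposition~\ref{prop:degenstd} is not needed for this lemma itself; it will be used afterwards to identify the standard monomials concretely as the antidiagonal-avoiding fillings.
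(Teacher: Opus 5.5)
Your argument is correct and is the standard fact the paper has in mind when it calls the lemma immediate: $I'$ is a monomial ideal, so the monomials $z^{\bf M}\notin I'$ form a finely homogeneous basis of $R/I'$, with each $z^{\bf M}$ of degree $\operatorname{mwt}({\bf M})$. You are also right that, as the lemma is stated (the sum is indexed by $z^{\bf M}\notin I'$), Proposition~\ref{prop:degenstd} is not logically needed; the paper cites it because its antidiagonal description of these standard monomials is what is used later, in the proof of Theorem~\ref{thm:poschar}.
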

\begin{proof}
Immediate from Proposition~\ref{prop:degenstd}.
\end{proof}

We now summarize additional results from \cite{WooYong} that we use.
Let $w,v\in S_N$ be arbitrary. In \cite[\S~4]{WooYong}, formulas are given for the $K$-polynomial and Hilbert series of ${\mathbb C}[Z_w]/I'_{w,v}$, where $I'_{w,v}=\operatorname{init}_{\prec_w}I_{w,v}$, with respect to the fine grading (in 
\emph{ibid.}~this corresponds to the rescaling action). This $K$-polynomial expression is the \emph{unspecialized Grothendieck polynomial} (\!\!\cite[Proposition~4.1]{WooYong}), itself described in terms of placements of $+$'s on the Rothe diagram of $w$ with the property that a certain \emph{Demazure product} associated to these placements gives
$v$ (modulo the $w_0$ arising from the upside-down
Rothe diagram convention in \cite{WooYong}). These placements, called
\emph{pipe dreams}, are denoted $\operatorname{Pipes}(w,v)$. We do not need to inspect
the definition of $\operatorname{Pipes}(w,v)$ in this paper except to say that the smallest size of any such pipe dream is $\ell(v)$. 
The $K$-polynomial formula is an alternating sum over $\operatorname{Pipes}(w,v)$.

The argument below restricts this discussion to the case $w=w_{\widetilde \Omega},v=v_{\widetilde \Omega}$. 
Above, we described $w_{\widetilde \Omega}$ and stated its Rothe diagram
is a $(d_{\operatorname{odd}}\times d_{\operatorname{even}})$-size rectangle $R$ where ${\widetilde Q}$-shape fits inside.
By the definition of $v_{\widetilde \Omega}$ from \cite{KKR}, if $\overline{\mathcal P}\in \operatorname{Pipes}(w_{\widetilde \Omega},v_{\widetilde 
\Omega})$ then there is a $+$
in every position of $R$ outside the ${\widetilde Q}$-shape. Now define ${\mathcal P}$ to be a \emph{pipe dream for 
$\widetilde \Omega$} if it agrees with some $\overline{\mathcal P}\in \operatorname{Pipes}(w_{\widetilde \Omega},v_{\widetilde \Omega})$ 
inside the ${\widetilde Q}$-shape. Let $\operatorname{Pipes}(\widetilde \Omega)$ be these pipe dreams
and $\ell(\widetilde\Omega)$ be the smallest number of $+$ appearing in any such object. 
By definition, 
adding back
the ``outside'' $+$'s gives a  bijection 
\begin{equation}
\label{eqn:Jul16yyy}
\operatorname{Pipes}(\widetilde \Omega) \xrightarrow{\sim}  \operatorname{Pipes}(w_{\widetilde \Omega},v_{\widetilde \Omega})
\end{equation}
 and 
$\ell(\widetilde\Omega)=\ell(v_{\widetilde\Omega})-q$ where $q$ is the number of ``outside'' $+$'s.

For ${\mathcal P}\in \operatorname{Pipes}(\widetilde \Omega)$, define
\[\operatorname{pwt}({\mathcal P})=\prod_{z_{cr}^{(i)}\in \mathcal P} (1-{\sf z}_{cr}^{(i)}).\]

\begin{lemma}[cf.~\cite{WooYong}]
\begin{equation}
\operatorname{Hilb}(R/I',{\sf z}) =  \frac{\sum_{\mathcal P} (-1)^{(\#+ \text{ in ${\mathcal P}$})-\ell({\tOmega})}
\operatorname{pwt}({\mathcal P})}{
\prod_{{\sf z}^{(i)}_{cr}\in Z_{\widetilde Q}}
\left(1-{\sf z}^{(i)}_{cr}\right)}  \label{eqn:Apr3-1}
\end{equation}
where in \eqref{eqn:Apr3-1} the sum is over all ${\mathcal P}\in\operatorname{Pipes}(\widetilde\Omega)$.
\end{lemma}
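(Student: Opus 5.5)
The plan is to transport the known $K$-polynomial formula of \cite{WooYong} from $\mathbb C[Z_w]/I'_{w,v}$ to $R/I'$ by specializing away the variables outside the $\widetilde Q$-shape. By \cite[\S~4]{WooYong} (specifically \cite[Proposition~4.1]{WooYong}), the fine-graded Hilbert series of $\mathbb C[Z_w]/I'_{w,v}$ equals the unspecialized Grothendieck polynomial divided by $\prod_{z_{pq}\in Z_w}(1-{\sf z}_{pq})$. That Grothendieck polynomial is
\[
\sum_{\overline{\mathcal P}\in\operatorname{Pipes}(w,v)}(-1)^{\#\overline{\mathcal P}-\ell(v)}\prod_{z_{pq}\in\overline{\mathcal P}}(1-{\sf z}_{pq}).
\]
We apply this with $w=w_{\widetilde\Omega}$ and $v=v_{\widetilde\Omega}$.

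\textbf{Step 1: identify the two quotients as finely graded rings.} From the proof of Proposition~\ref{prop:degenstd}, the initial ideal $I'_{w,v}$ is generated by two kinds of elements: every variable of $Z_w$ outside the $\widetilde Q$-shape, together with the antidiagonal monomials inside it that generate $I'$. Hence $\mathbb C[Z_w]/I'_{w,v}\cong R/I'$. Under this isomorphism the fine grading on the left restricts to the fine grading on the right, and every outside variable is set to zero. The Hilbert series of $\mathbb C[Z_w]/I'_{w,v}$ therefore equals $\operatorname{Hilb}(R/I',{\sf z})$, viewed as a series involving only the $\widetilde Q$-shape weights.

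\textbf{Step 2: cancel the outside factors.} By the fact recorded before the lemma, every $\overline{\mathcal P}\in\operatorname{Pipes}(w_{\widetilde\Omega},v_{\widetilde\Omega})$ has a $+$ in each of the $q$ positions of the rectangle $R$ lying outside the $\widetilde Q$-shape. Each such $+$ contributes a factor $(1-{\sf z}_{pq})$ to the numerator. These factors cancel against the same factors in the denominator.

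The remaining variables of $Z_w$, those outside the rectangle (the padding to the southeast), need separate handling. Since $w$ is dominant with Rothe diagram equal to the rectangle, $Z_w$ has variables only inside $R$. So after the cancellation the denominator becomes $\prod_{{\sf z}^{(i)}_{cr}\in Z_{\widetilde Q}}(1-{\sf z}^{(i)}_{cr})$. I would verify this structural claim about $Z_w$ carefully; if extra variables existed, they would have to be handled by the same cancellation.

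\textbf{Step 3: reindex the numerator and fix the sign.} Use the bijection \eqref{eqn:Jul16yyy}. If $\overline{\mathcal P}\mapsto\mathcal P$ removes the $q$ outside $+$'s, then $\#\overline{\mathcal P}=\#\mathcal P+q$ and $\ell(v_{\widetilde\Omega})=\ell(\widetilde\Omega)+q$. The exponent $\#\overline{\mathcal P}-\ell(v)$ therefore equals $\#\mathcal P-\ell(\widetilde\Omega)$. The residual product is exactly $\operatorname{pwt}(\mathcal P)$, which gives \eqref{eqn:Apr3-1}.

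\textbf{Main obstacle.} I expect the real difficulty to be Step 1's bookkeeping. One must confirm that the fine grading of \cite{WooYong} (its rescaling action, with the upside-down Rothe diagram convention and the $w_0$ twist) matches our ${\sf z}^{(i)}_{cr}$ grading position by position. One must also confirm that the sign normalization in \cite[Proposition~4.1]{WooYong} is $(-1)^{\#-\ell(v)}$, so that the leading term is positive. I would settle both by checking against the smallest case: a single $A_2$ block with rank condition zero, where $R/I'=\mathbb C$.
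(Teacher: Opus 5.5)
Your proposal is correct and follows the paper's proof: identify $\mathbb C[Z_{w_{\widetilde\Omega}}]/I'_{w_{\widetilde\Omega},v_{\widetilde\Omega}}$ with $R/I'$ using the Gr\"obner description from the proof of Proposition~\ref{prop:degenstd}, then apply the Woo--Yong pipe-dream formula together with the bijection~\eqref{eqn:Jul16yyy}. Your Steps~2 and~3 make explicit what the paper compresses into ``by \cite{WooYong} and the bijection'': cancelling the $(1-{\sf z}_{pq})$ factors at the $q$ outside positions (using that the Rothe diagram of the dominant $w_{\widetilde\Omega}$ is exactly the rectangle), and checking that $\#\overline{\mathcal P}-\ell(v_{\widetilde\Omega})=\#\mathcal P-\ell(\widetilde\Omega)$.
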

\begin{proof}
As explained in the proof of Proposition~\ref{prop:degenstd},
the projection that forgets the coordinates of
\(Z_{w_{\widetilde\Omega}}\) outside the \(\widetilde Q\)-shape
induces an isomorphism
$\mathbb C[Z_{w_{\widetilde\Omega}}]/
I_{w_{\widetilde\Omega},v_{\widetilde\Omega}}
\;\cong\;
\mathbb C[Z^{\widetilde Q}]/I_{\widetilde\Omega}$.
Moreover, the Gr\"obner-basis description used there shows that the
same projection induces an isomorphism of the initial quotients
$\mathbb C[Z_{w_{\widetilde\Omega}}]/
I'_{w_{\widetilde\Omega},v_{\widetilde\Omega}}
\;\cong\;
\mathbb C[Z^{\widetilde Q}]/I'
=
R/I'$.
Indeed, the variables outside the \(\widetilde Q\)-shape belong to the
initial ideal, while the remaining initial generators are precisely
those of \(I'\). Thus, by~\cite{WooYong} and the bijection~\eqref{eqn:Jul16yyy}
between
\(\operatorname{Pipes}(\widetilde\Omega)\) and
\(\operatorname{Pipes}(w_{\widetilde\Omega},
v_{\widetilde\Omega})\), the expression on the right-hand side of
\eqref{eqn:Apr3-1} equals
$\operatorname{Hilb}\!\left(
\mathbb C[Z_{w_{\widetilde\Omega}}]/
I'_{w_{\widetilde\Omega},v_{\widetilde\Omega}},
{\sf z}
\right)$ and thus also $\operatorname{Hilb}(R/I',{\sf z})$.
\end{proof}
 
Following \cite{KKR}, call ${\mathcal P}\in\operatorname{Pipes}(\widetilde\Omega)$ 
\emph{good} if it has no $+$ in a phantom region, and \emph{bad} otherwise.
We define the \emph{unspecialized Hilbert series for} ${\mathbb C}[\Omega]$ by
\begin{equation}\label{eqn:Apr3-3}
\overline{\operatorname{Hilb}}({\mathbb C}[\Omega],{\sf z}):=\frac{\sum_{\text{good ${\mathcal P}$}}
(-1)^{(\#+ \text{ in ${\mathcal P}$})-\ell({\tOmega})}
\operatorname{pwt}({\mathcal P})}{\prod_{\text{non-phantom ${\sf z}_{cr}^{(i)}$}} (1-{\sf z}_{cr}^{(i)})}. 
\end{equation}
As explained below, upon specialization of ${\sf z}_{cr}^{(e)}$,  $\overline{\operatorname{Hilb}}({\mathbb C}[\Omega],{\sf z})$ is precisely the Hilbert series formula for ${\mathbb C}[\Omega]$ from \cite{KKR}, but our unspecialized
version allows us to connect to the combinatorics of the bipartite case, in this calculation:

\begin{lemma}\label{lemma:Apr3aaa1}
$\overline{\operatorname{Hilb}}({\mathbb C}[\Omega],{\sf z})=\left[\prod_{{\text{\emph{phantom} $z^{(i)}_{cr}$}}} {\sf z}_{cr}^{(i)}\right] 
\operatorname{Hilb}(R/I',{\sf z})$,
where in the coefficient operator $\left[\prod_{{\text{\emph{phantom} $z^{(i)}_{cr}$}}} {\sf z}_{cr}^{(i)}\right]$, the product
is taken over all positions such that $z_{cr}^{(i)}$ is in a phantom region. 
\end{lemma}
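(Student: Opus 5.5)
We start from the quotient formula \eqref{eqn:Apr3-1} for $\operatorname{Hilb}(R/I',{\sf z})$ and extract the coefficient of the squarefree monomial $\prod_{\text{phantom}}{\sf z}^{(i)}_{cr}$ directly. Write the denominator as $D_{\rm ph}\cdot D_{\rm np}$, where $D_{\rm ph}=\prod_{\text{phantom}}(1-{\sf z}^{(i)}_{cr})$ and $D_{\rm np}$ is the non-phantom product appearing in \eqref{eqn:Apr3-3}. All series are expanded as formal power series in the variables ${\sf z}^{(i)}_{cr}$. The phantom variables and the non-phantom variables are disjoint sets, so the coefficient extraction in the phantom variables is ${\mathbb C}[[\text{non-phantom}]]$-linear. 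It can therefore be carried out term by term in the numerator sum over ${\mathcal P}\in\operatorname{Pipes}(\widetilde\Omega)$, with $D_{\rm np}^{-1}$ factored out.

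Fix ${\mathcal P}$. Its weight factors as $\operatorname{pwt}({\mathcal P})=\operatorname{pwt}_{\rm ph}({\mathcal P})\operatorname{pwt}_{\rm np}({\mathcal P})$ according to whether each $+$ sits in a phantom or a non-phantom box. The phantom part of the term is
\[
\prod_{+\in{\mathcal P}\text{ phantom}}(1-{\sf z}_{cr})\;\prod_{\text{all phantom}}(1-{\sf z}_{cr})^{-1}=\prod_{\text{phantom boxes without }+}(1-{\sf z}_{cr})^{-1}.
\]
This is a product of independent geometric series, one for each phantom box $b$ not covered by a $+$ of ${\mathcal P}$. The coefficient of $\prod_{\text{phantom}}{\sf z}_{cr}$ in it is $1$ exactly when every phantom box lacks a $+$, that is, when ${\mathcal P}$ is good. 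If ${\mathcal P}$ is bad, some phantom box $b$ carries a $+$, so the variable ${\sf z}_b$ does not appear in that product at all and the coefficient is $0$.

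Summing over ${\mathcal P}$, the coefficient of $\prod_{\text{phantom}}{\sf z}^{(i)}_{cr}$ in $\operatorname{Hilb}(R/I',{\sf z})$ is
\[
\frac{\sum_{\text{good }{\mathcal P}}(-1)^{(\#+)-\ell(\tOmega)}\operatorname{pwt}_{\rm np}({\mathcal P})}{D_{\rm np}}.
\]
For a good ${\mathcal P}$ we have $\operatorname{pwt}_{\rm np}({\mathcal P})=\operatorname{pwt}({\mathcal P})$, since it has no phantom $+$. This expression is therefore exactly \eqref{eqn:Apr3-3}, which proves the lemma.

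The only point needing care is legitimacy. The numerator is a finite sum of polynomials, and $D_{\rm ph}^{-1}$ and $D_{\rm np}^{-1}$ are expanded as power series. Because the phantom and non-phantom variable sets are disjoint, extracting a coefficient in the phantom variables commutes with multiplication by power series in the non-phantom variables. I expect no real obstacle beyond this bookkeeping.
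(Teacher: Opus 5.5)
Your proof is correct and is essentially the paper's argument. You apply the coefficient operator term by term to \eqref{eqn:Apr3-1}: a good pipe dream keeps its weight over the non-phantom denominator, since the full product of phantom geometric series has coefficient $1$, while for a bad pipe dream a phantom factor $(1-{\sf z}_{cr}^{(i)})$ cancels against the denominator and the extracted coefficient vanishes. Your justification that the coefficient extraction is legitimate (the phantom and non-phantom variables are disjoint, and the numerator is a finite sum of polynomials) makes explicit a point the paper leaves implicit.
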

\begin{proof}
Apply the coefficient operator
$\left[\prod_{{\text{phantom $z_{cr}^{(i)}$}}}
{\sf z}_{cr}^{(i)}\right]$
to \eqref{eqn:Apr3-1}. If $\mathcal P$ is good, then
$\operatorname{pwt}(\mathcal P)$ is independent of the phantom variables, and hence
\[
\left[\prod_{\text{phantom $z_{cr}^{(i)}$}}
{\sf z}_{cr}^{(i)}\right]
\frac{\operatorname{pwt}(\mathcal P)}
{\prod_{i,r,c}(1-{\sf z}_{cr}^{(i)})}
=
\frac{\operatorname{pwt}(\mathcal P)}
{\prod_{\text{non-phantom }z_{cr}^{(i)}}(1-{\sf z}_{cr}^{(i)})},
\]
since the coefficient of
$\prod_{\text{phantom $z_{cr}^{(i)}$}}{\sf z}_{cr}^{(i)}$
in the product of geometric series is $1$.

If $\mathcal P$ is bad, then for some phantom position $(i,r,c)$,
the factor $(1-{\sf z}_{cr}^{(i)})$ divides
$\operatorname{pwt}(\mathcal P)$. Cancelling this factor with the
denominator leaves an expression independent of ${\sf z}_{cr}^{(i)}$,
so its coefficient of
$\prod_{\text{\emph{phantom} $z_{cr}^{(i)}$}}{\sf z}_{cr}^{(i)}$
is $0$.

Thus only the good pipe dreams contribute, giving
$\overline{\operatorname{Hilb}}(\mathbb C[\Omega],{\sf z})$.
\end{proof}

\noindent\emph{Proof of Theorem~\ref{thm:poschar}:} In view of Lemma~\ref{lemma:Apr3aaa1}
and \eqref{eqn:Apr3-2},
\[
\overline{\operatorname{Hilb}}({\mathbb C}[\Omega],{\sf z})=\left[\prod_{{\text{phantom $z^{(i)}_{cr}$}}} {\sf z}_{cr}^{(i)}\right]
\sum_{{\bf M}} \operatorname{mwt}({\bf M}),
\]
where the sum is over ${\bf M}$ such that $z^{\bf M}\not\in I'$. If ${\bf M}$ contributes to the coefficient,
 $M_i(c,r)=1$
for every phantom $Q$-shape position $(c,r)$ in region ${\mathcal R}_i$. Define ${\bf A}=(A_{1},\ldots,A_{2m})$ to agree with ${\bf M}$ except at those phantom positions,
and where $A_{i}(c,r)=0$ at a phantom position. Clearly,
\[\operatorname{mwt}({\bf A})=\frac{\operatorname{mwt}({\bf M})}{\prod_{{\text{phantom $z_{cr}^{(i)}$}}} {\sf z}_{cr}^{(i)}}.\]
Moreover, after omitting the phantom regions, the map ${\bf M}\mapsto {\bf A}$ is a bijection between standard monomials of $I'$ having
exponent~$1$ at every phantom position and $\operatorname{Adm}(\Omega)$.
Indeed, by Proposition~\ref{prop:degenstd},
$z^{\bf M}\notin I'$ precisely when, for every region \(R_{[i,j]}\),
the positive support of ${\bf M}$ contains no antidiagonal of length
$r_{[i,j]}+1$. Since $M_i(c,r)=1$ at every phantom position,
after those positions are erased this is precisely stating that
${\bf A}$ contains no (phantom) antidiagonal of length $r_{[i,j]}+1$.

Thus, as an equality of generating series:
\begin{equation}\label{eqn:Apr3-5}
\overline{\operatorname{Hilb}}({\mathbb C}[\Omega],{\sf z})=\sum_{{\bf A}\in \Std(\Omega)} \operatorname{mwt}({\bf A}).
\end{equation}

Finally, we invoke \cite{KKR}: Each $e\in Q_1$ is associated with an edge $i(e)\in [2m]$ of ${\widetilde Q}_1$.  
Upon the substitution 
\[{\sf z}_{cr}^{(i(e))}\mapsto {\sf z}_c^{(t(e))}/{\sf z}_r^{(h(e))}, e\in Q_1,\]
$\overline{\operatorname{Hilb}}({\mathbb C}[\Omega],{\sf z})$ equals the Hilbert series of ${\mathbb C}[\Omega]$
for the $T(\d)$-grading, i.e., $\operatorname{char}_{T(\d)}{\mathbb C}[\Omega]$.
 Hence the same substitution on the righthand side of \eqref{eqn:Apr3-5}
gives the desired expression for the same Hilbert series (i.e., character), as a weighted sum over $\operatorname{Adm}(\Omega)$.\qed

\section{Character theory and the proof of Theorems~\ref{thm:newrule} $\&$~\ref{thm:mainprecise}}\label{sec:proof-cont}
The rational character ring of $GL_d$ is 
\[R(GL_d)=\mathbb Z[x_1^{\pm1},\ldots,x_d^{\pm1}]^{{\mathfrak S}_d},\] 
where
${\mathfrak S}_d$ is the symmetric group on $[d]$. It has a ${\mathbb Z}$-basis of  the irreducible characters
$\operatorname{char}_{T(d)}(V_{\alpha}(d))$ where $V_{\alpha}(d)$ is the irreducible representation of $GL_d$ of highest
weight $\alpha=(\alpha_1\geq \alpha_2\geq\cdots\geq \alpha_{d})\in {\mathbb Z}^d$; here $T(d)\subset GL_d$ is the standard maximal torus. If $\alpha=\lambda-\Delta\cdot(1,1,\ldots,1)$ then 
\[\operatorname{char}_{T(d)}(V_{\alpha}(d))=(x_1\cdots x_d)^{-\Delta}s_{\lambda}(x_1,\ldots,x_d),\] 
where $s_{\lambda}(x_1,\ldots,x_d)$ is the \emph{Schur polynomial}. Recall, 
\[V_{[\lambda,\Delta]}(d)\cong V_{[\widetilde\lambda,\widetilde\Delta]}(d) \text{ if $\lambda-\Delta\cdot(1,1,\ldots,1)=\widetilde\lambda-\widetilde\Delta\cdot(1,1,\ldots,1)$.}\] 
Furthermore, 
$R(GL(\d)) \cong \bigotimes_{i=1}^n R(GL_{d_i})$.
Consequently, $R(GL(\d))$ has a ${\mathbb Z}$-basis of characters $\operatorname{char}_{T(\d)}(V_{{\bf \Lambda}})$ where ${\bf \Lambda}$ is a sequence of highest weights (whether described by partitions, rational shapes or a sequence of pairs $[\lambda,\Delta]$). See \cite{Stembridge} for more details.

The coordinate ring $\mathbb{C}[\Omega]$ has a unique isotypic decomposition 
$\mathbb{C}[\Omega] \cong \bigoplus_{{\bf \Lambda}}  V_{{\bf \Lambda}}^{\oplus m_{\Omega,{\bf \Lambda}}}$.
All formal-character identities below are understood degree-wise; i.e.,
they lie in $R(GL(\d))[[t]]$, with the variable $t$ suppressed from the notation.
\begin{equation}
\label{eqn:June24rrr}
\operatorname{char}_{T(\d)}({\mathbb C}[\Omega])=
\sum_{{\bf \Lambda}} m_{\Omega, {\bf \Lambda}}\, \text{char}_{T(\d)}(V_{{\bf \Lambda}}).
\end{equation}

For a weight $\mu^{(i)}=(\mu_1^{(i)},\ldots,\mu_{d_i}^{(i)})\in {\mathbb Z}^{d_i}$ of the torus $T(d_i)$, define 
\[{\sf z}^{\mu^{(i)}}=\prod_{\ell=1}^{d_i} ({\sf z}_{\ell}^{(i)})^{\mu_{\ell}^{(i)}}.\]
For a weight of the product of tori $T(\d)$, $\mu=(\mu^{(1)},\ldots,\mu^{(n)})\in {\mathbb Z}^{d_1}\times\cdots
\times {\mathbb Z}^{d_n}$, define
\[{\sf z}^{\mu}=\prod_{i\in Q_0}\prod_{\ell=1}^{d_i} ({\sf z}_{\ell}^{(i)})^{{\mu}_{\ell}^{(i)}}.\]

Now for an ordinary partition $\gamma$ with at most $d_i$ parts, define
\[\operatorname{char}({\mathcal B}_{\gamma}^{(d_i)})
=\sum_{T\in {\mathcal B}_{\gamma}^{(d_i)}} {\sf z}^{\operatorname{wt}_{{\mathcal B_{\gamma}}}(T)}.\]
This expression equals 
$s_\gamma\bigl({\sf z}^{(i)}_1,\ldots,{\sf z}^{(i)}_{d_i}\bigr) =\operatorname{char}_{T(d_i)}(V_\gamma(d_i))$.
More generally, if $\gamma$ is a rational shape, by Proposition~\ref{prop:tableau-shift-isomorphism} it follows that
$\operatorname{char}({\mathcal B}_{\gamma}^{(d_i)})=\operatorname{char}_{T(d_i)}(V_\gamma(d_i))$.
Lastly, for ${\bf{\Lambda}}=(\gamma_1,\ldots,\gamma_n)$ and ${\mathcal B}_{{\bf \Lambda}}=\prod_{i\in Q_0}
{\mathcal B}_{\gamma_i}^{(d_i)}$ (Section~\ref{subsec:Cryprofiles}) define
\begin{equation}
\label{eqn:July18ccc}
\operatorname{char}({\mathcal B}_{\bf \Lambda})=\sum_{{\bf T}=(T_1,\ldots,T_n)\in {\mathcal B}_{\bf \Lambda}}
{\sf z}^{\operatorname{wt}_{{\mathcal B}_{\bf\Lambda}}({\bf T})}.
\end{equation}
Since the weight function ${wt}_{{\mathcal B}_{\bf\Lambda}}$ is defined componentwise by \eqref{eqn:July18aaa},
\begin{equation}
\label{eqn:July18bbb}
\operatorname{char}({\mathcal B}_{\bf{\Lambda}})=\prod_{i\in Q_0} \operatorname{char}({\mathcal B}_{\gamma_i}^{(d_i)})
=\operatorname{char}_{T(\d)}(V_{\bf\Lambda}).
\end{equation}
Since insertion preserves weight, if ${\bf M}$ has profile
${\bf\Lambda}$, then
\[
\operatorname{wt}({\bf M})
=
\operatorname{wt}_{{\mathcal B}_{\bf\Lambda}}
\bigl(\operatorname{quiverRSK}({\bf M})\bigr).
\]
Therefore,
\[\operatorname{wt}_{T({\bf d})}({\bf M})
= {\sf z}^{\operatorname{wt}({\bf M})}
= {\sf z}^{\operatorname{wt}_{{\mathcal B}_{\bf\Lambda}}
(\operatorname{quiverRSK}({\bf M}))}.\]
Hence,
\[\operatorname{char}_{T(\d)}(\mathbb{C}[\Omega])\stackrel{\text{Thm.~\ref{thm:poschar}}}{=}
\sum_{{\bf M}\in \Std(\Omega)} \!\!\wt_{T(\d)}({\bf M}) 
\stackrel{\text{Cor.~\ref{cor:crystal-decomp-quiver-locus}}}{=}
\sum_{\mathcal{C}}\sum_{{\bf T}\in \mathcal{B}_{{\bf \Lambda}(\mathcal{C})} } {\sf z}^{\wt({\bf T})} 
\stackrel{\eqref{eqn:July18ccc}, \eqref{eqn:July18bbb}}{=} \sum_{\mathcal{C}} \text{char}_{T(\d)}(V_{{\bf \Lambda}(\mathcal{C})}).\]
Comparing with \eqref{eqn:June24rrr}, and using the linear independence of irreducible characters, we see $m_{\Omega,{\bf \Lambda}}$ equals the number of connected components $\mathcal{C}$ of profile ${\bf \Lambda}$. 
By Corollary~\ref{cor:crystal-decomp-quiver-locus}, each connected component ${\mathcal C}$ is
isomorphic to ${\mathcal B}_{{\bf \Lambda}({\mathcal C})}$, and hence has a
unique highest-weight element. Therefore, $m_{\Omega,{\bf \Lambda}}$ counts
highest-weight elements ${\bf M}\in\Std(\Omega)$ with
$\operatorname{profile}({\bf M})={\bf \Lambda}$, proving Theorem~\ref{thm:mainprecise}.

By Proposition~\ref{prop:ordinification-isomorphism}, ordinification preserves the
highest-weight condition. Thus the highest-weight matrices ${\bf M}$ appearing in
Theorem~\ref{thm:mainprecise} are precisely the highest-weight matrices of
Theorem~\ref{thm:newrule}. Moreover, the number of barred letters in the $i$-th entry of 
${\sf read}_Q({\bf M})$  is $\Delta_i$, so
Lemmas~\ref{lem:duality-shift-equiv} and~\ref{lem:col-ins-row-ins}
identify the rational shape at vertex $i$ with the highest-weight indicated by $[\lambda^{(i)},\Delta_i]$. Theorem~\ref{thm:newrule} follows.
\qed

\section{Application to the variety of complexes case}\label{sec:apptocomplex}

We now apply our main result, Theorem~\ref{thm:newrule} (in its rephrased form, Theorem~\ref{thm:mainprecise}), to derive
a simple formula in the variety of complexes case. That is, we assume that the quiver locus corresponds
to the situation
\begin{equation}
\label{eqn:thesituation}
{\mathbb C}^{d_1} \xrightarrow{L_1} {\mathbb C}^{d_2} \xrightarrow{L_2} \cdots \xrightarrow{L_{n-1}} 
{\mathbb C}^{d_n} \text{\ such that } L_{i+1}L_i={\bf 0}, 1\leq i\leq n-2.
\end{equation}
Thus, the dimension vector of our equioriented $A_n$-quiver $Q$ is $\d=(d_1,d_2,\ldots,d_n)$.

Fix a collection of nonnegative 
integers ${\bf r}=(r_i)_{i=0}^{n}$ where $r_0=r_{n}=0$ and $r_i+r_{i+1}\leq d_{i+1}$ for $0\leq i\leq n-1$.
Any nonempty quiver locus $\Omega$ of \eqref{eqn:thesituation} is described as
\[\Omega=\Omega_{\bf r}=\{(L_1,\ldots,L_{n-1}): \operatorname{rank}(L_i)\leq r_i, L_{i+1}L_i={\bf 0}\}\]
for such ${\bf r}$; see, e.g., the rank array viewpoint for equioriented quivers in \cite{BuchFulton}. Given partitions
$\lambda_1,\lambda_2,\ldots,\lambda_{n-1}$ such that $\ell(\lambda_i)\!\leq \! r_i$ for $1\!\leq i\leq\! n-1$, let
\[{\bf \Lambda}(\lambda_1,\ldots,\lambda_{n-1})=(\lambda_1\div \varnothing,\lambda_2\!\div\!\lambda_1,\lambda_3\!\div\!\lambda_2,\ldots,
\lambda_{n-1}\!\div\!\lambda_{n-2},\varnothing\!\div\!\lambda_{n-1}).\]

De Concini and Strickland~\cite[Theorem~1.7]{Strickland} give a basis for the coordinate ring using \emph{standard multitableaux}. This basis can then be used to determine the $GL$-decomposition of the coordinate ring and its multiplicity-freeness (see~\cite[Remark p.~70]{Strickland}). However, the rule below does not appear explicitly in \cite{Strickland}, and in any case is derived using different methods.

\begin{corollary}[{cf.~\cite[Remark p.~70]{Strickland}}]\label{thm:complexesapp}
If $\Omega_{\bf r}\subset \rep_Q(\d)$ then $\mathbb{C}[\Omega_{\bf r}]$ is multiplicity-free. Indeed,
$m_{\Omega_{\bf r},{\bf \Lambda}}=1$ if ${\bf \Lambda}={\bf \Lambda}(\lambda_1,\ldots,\lambda_{n-1})$
and $m_{\Omega_{\bf r},{\bf \Lambda}}=0$ otherwise.
\end{corollary}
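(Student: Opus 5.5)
We will apply Theorem~\ref{thm:mainprecise} directly, so the task is to enumerate the highest-weight elements of $\Std(\Omega_{\bf r})$ and read off their profiles.

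\emph{Step 1: the $Q$-shape and admissibility.} The quiver is equioriented, so $\widetilde Q$ inserts a phantom identity region $I_{d_i}$ at every interior vertex $i$. The $Q$-shape is then a staircase: $M_{i-1}$ sits directly below the phantom square at vertex $i$, and $M_i$ sits directly to its right. The locus is the orbit closure of a representation with $L_{i+1}L_i=0$ and $\operatorname{rank} L_i=r_i$. We first compute the ranks $r_{[p,q]}$ of the unions of consecutive regions, using the partial permutation representative whose images and kernels are disjoint in the appropriate sense. For $\mathcal R_{i-1}\cup(\text{phantom } i)\cup\mathcal R_i$ the rank is $d_i$ plus $0$, because the composition vanishes. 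More generally, each longer union has rank equal to the sum of the phantom dimensions plus the ranks of the two end blocks, corrected by the vanishing compositions. The admissibility conditions then reduce to two local statements. First, $M_i$ contains no antidiagonal of length $r_i+1$. Second, a phantom antidiagonal through the phantom square at $i$ is too long exactly when an antidiagonal in $M_{i-1}$ (read in rows below) and one in $M_i$ combine with the southeast part of the phantom square to exceed $d_i$. Concretely, whenever $M_{i-1}$ has a positive entry in column $c$ and $M_i$ has a positive entry in row $c'$ with $c'\le c$, we get a long phantom antidiagonal. The constraint is therefore that all row indices used by $M_i$ exceed all column indices used by $M_{i-1}$. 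I would verify that the longer unions impose nothing new, which should follow from this local condition.

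\emph{Step 2: highest-weight elements.} At vertex $i$ the word is $\overline{\operatorname{col}}(M_{i-1})\operatorname{row}(M_i)\in\mathcal W^{-+}_{d_i}$. The raising operators for vertex $i+1$ act on $M_i$ as column bicrystal operators, and those for vertex $i$ act on $M_i$ as row operators. By the $A_2$ theory, a matrix that is highest weight for both row and column operators is diagonal-type: its row and column words are both Yamanouchi. Such a matrix is determined by a partition, namely the RSK shape $\lambda_i$. Its barred part must be lowest weight, so its columns are pushed to the end $d-h+1,\dots,d$, as in Corollary~\ref{cor:superlowertab}. Combining these, I would show that the highest-weight $\bf M$ are exactly the tuples in which $M_i$ is the unique bicrystal-extremal matrix of shape $\lambda_i$. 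Its positive entries lie in rows $1,\dots,\ell(\lambda_i)$, interpreted through the row word, and in the last $\ell(\lambda_i)$ columns, interpreted through the barred column word. Under this description the antidiagonal condition $\ell(\lambda_i)\le r_i$ and the separation condition of Step~1 become the inequalities $\ell(\lambda_{i-1})+\ell(\lambda_i)\le d_i$, which hold automatically from $r_{i-1}+r_i\le d_i$. The profile at vertex $i$ is $\lambda_i\div\lambda_{i-1}$, because Theorem~\ref{thm:Steminsprops}(II) lets the barred and unbarred parts insert independently.

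\emph{Step 3: counting.} Each choice of $(\lambda_1,\dots,\lambda_{n-1})$ with $\ell(\lambda_i)\le r_i$ gives exactly one highest-weight element, and distinct choices give distinct profiles. Theorem~\ref{thm:mainprecise} then yields the multiplicity $1$ on ${\bf\Lambda}(\lambda_1,\dots,\lambda_{n-1})$ and $0$ elsewhere.

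The main obstacle is Step~2. Vertex $i$'s operators act on both $M_{i-1}$ and $M_i$ through a single separated word, so the highest-weight condition at vertex $i$ couples the two blocks. The claim to prove is that this coupling, together with admissibility, forces each block individually to be bicrystal highest/lowest weight; this is where the separation condition from Step~1 is essential. I would first try to show that in an admissible $\bf M$ the barred letters of $\overline{\operatorname{col}}(M_{i-1})$ produce no brackets that pair with those of $\operatorname{row}(M_i)$, so that the crystal at vertex $i$ factors as a tensor product in which highest weight means highest weight in each factor.
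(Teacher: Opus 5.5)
\textbf{Verdict: there is a genuine gap.} Your overall plan matches the paper's: reduce admissibility to the blocks $M_i$ and the three-block regions, then enumerate highest-weight elements via Theorem~\ref{thm:mainprecise}, with each $M_i$ forced to be the bicrystal-extremal matrix of shape $\lambda_i$. But both local claims you rely on are wrong.

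\textbf{Step 1 criterion.} Your separation criterion is false, and it points the wrong way. Suppose the part of an antidiagonal lying in $M_i$ has $p$ boxes and ends in row $a$, and the part lying in $M_{i-1}$ has $q$ boxes and starts in column $b$. Then the phantom square can contribute at most $d_i-\max(a,b)$ boxes. So the danger is \emph{small} row indices in $M_i$ together with \emph{small} column indices in $M_{i-1}$; for single entries, only $a=b=1$ is bad. For example, take $\mathbf{d}=(1,2,1)$, ${\bf r}=(0,1,1,0)$, $M_1=(0\ 1)$, $M_2=(1\ 0)^T$.
\begin{itemize}
\item The longest phantom antidiagonal in ${\mathcal R}_{[1,3]}$ has length $2=d_2$, so ${\bf M}\in\Std(\Omega_{\bf r})$.
\item This ${\bf M}$ is exactly the highest-weight element of profile ${\bf\Lambda}((1),(1))$.
\item Your criterion ($c'=1\le c=2$) rejects it.
\end{itemize}
Your Step~2 description (rows of $M_i$ at the top, columns of $M_{i-1}$ at the right end) in fact contradicts your own Step~1 constraint whenever both blocks are nonzero.

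\textbf{Decoupling step.} The decoupling you propose for the main obstacle cannot hold for arbitrary admissible ${\bf M}$. Take $\mathbf{d}=(1,3,1)$, $M_1=(0\ 1\ 0)$, $M_2=(0\ 1\ 0)^T$. This ${\bf M}$ is admissible (the longest phantom antidiagonal has length $3=d_2$). Yet in the vertex-$2$ word $\barindex{2}\,2$, the bracket of $\barindex{2}$ pairs with that of $2$ for $\worde_2$.

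\textbf{Missing idea.} You have to use the highest-weight hypothesis \emph{before} admissibility:
\begin{enumerate}
\item $\operatorname{row}(M_i)$ is a suffix of the highest-weight word at vertex $i$, hence is itself highest weight.
\item By Greene's theorem, $M_i$ then has an antidiagonal of length $\ell(\lambda_i)$ occupying rows $1,\ldots,\ell(\lambda_i)$.
\item This antidiagonal extends through the southeast $(d_i-\ell(\lambda_i))$-square of the phantom region to one of length $d_i$.
\item Admissibility therefore forbids positive entries of $M_{i-1}$ in columns $\le\ell(\lambda_i)$.
\end{enumerate}
Only this label disjointness yields that $\overline{\operatorname{col}}(M_{i-1})$ is itself highest weight, so that $M_{i-1}$ is determined by RSK.

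\textbf{Profile computation.} The disjointness is also what gives the shape $\lambda_i\div\lambda_{i-1}$ at vertex $i$. Theorem~\ref{thm:Steminsprops}(II) does not give this: it only lets you commute the order of barred and unbarred insertion, and barred and unbarred letters can cancel (e.g.\ $\ins_r(\barindex{1}\,1)=\varnothing$). The paper instead uses the shift argument. The first $k$ columns of $\operatorname{shift}^k(\varnothing\div\widetilde V)$ each contain $1,\ldots,\ell(\lambda_i)$, and they are untouched when $\operatorname{row}(M_i)$ is inserted, precisely because of the disjointness.
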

\begin{proof} Let 
\[V_i=A_i\oplus B_i\oplus C_i, \qquad \dim A_i=r_{i-1}, \qquad \dim B_i = r_i, \qquad \dim C_i=d_i-r_{i-1}-r_i.\] 
Define $L_i$ to be an isomorphism 
$B_i\xrightarrow{\sim} A_{i+1}$ and
zero on $A_i\oplus C_i$. Hence $\operatorname{rank} \, L_i=r_i$ and $\operatorname{im} \, L_i
=A_{i+1}\subseteq \operatorname{ker} \, L_{i+1}$ and therefore $L_{i+1}L_i={\bf 0}$. 
Picking bases, the $Q$-shape alternates between 
partial permutation matrices $L_i$ in ${\mathcal R}_{2i-1}$ and phantom blocks in ${\mathcal R}_{2i}$. 
Inspecting, for $i<j$, the rank of ${\mathcal R}_{[2i-1,2j-1]}$ is 
$d_{i+1}+d_{i+2}+\cdots+d_{j}$. It follows (by pigeonhole) that
${\bf M}\in \Std(\Omega_{\bf r})$ if and only if ${\mathcal R}_{2i-1}$ avoids
an antidiagonal of length $r_i+1$  for $1\leq i\leq n-1$, and each three-block region ${\mathcal R}_{[2i-1,2i+1]}$ avoids 
a (phantom) antidiagonal of length $d_{i+1}+1$ for $1\leq i\leq n-2$.

We first prove the claim for $n=3$ where 
${\bf M} =\vcenter{\hbox{\begin{tikzpicture}[scale=.9, baseline=-.5ex]
\draw (0,0) rectangle (.84,.84);
\node at (.42,.42) {\small{$M_1$}};

{\phantomtile{0}{.84}{3}{3}{
}}

\draw (.84,.84) rectangle (1.68,1.68);
\node at (1.26,1.26) {\small{$M_2$}};

\draw[purple, -{Stealth[length=2mm,width=1.5mm]}, thick] (0,.84) -- (.84,.84);
\draw[purple, -{Stealth[length=2mm,width=1.5mm]}, thick] (.84,1.68) -- (.84,.84);
\draw[purple, -{Stealth[length=2mm,width=1.5mm]}, thick] (.84,1.68) -- (1.68,1.68);
\end{tikzpicture}}}$.
Suppose ${\bf M}$ contributes to $m_{\Omega,\Lambda}$ under the rule of Theorem~\ref{thm:mainprecise}.
Now,
\[{\sf read}_Q({\bf M}) = (\operatorname{row}(M_1), \, \overline{\operatorname{col}}(M_1)\operatorname{row}(M_2), \,
\overline{\operatorname{col}}(M_2))=: ({\bs w}_1, {\bs w}_2, {\bs w}_3).\] 
Hence 
\[{\sf profile}({\bf M})= (\lambda_1, \mu_2\div \mu_1, \varnothing\div \lambda_2) \text{ for some partitions $\lambda_1,\mu_1,\mu_2,\lambda_2$.}\]

Note that $\operatorname{row}(M_2)$ is a highest-weight word since it is the suffix of the highest weight word
${\bs w}_2^{+}$. By Greene's theorem on RSK, $M_2$ has a longest antidiagonal of length
$j=\ell(\lambda_2)\leq~r_2$ using rows $1,2,\ldots,j$. Such an 
antidiagonal extends to one
of length $d_2$ by picking positions in the phantom region such that the southeastmost choice is in row $d_2$
and column $j+1$ of the phantom region. Since ${\bf M}$ does not contain a length $d_2+1$ antidiagonal, $M_1$ has no positive entries in columns $1,2,\ldots,j$. 

Let $\ins_r(\overline{\operatorname{col}}(M_1))=\varnothing\div \widetilde{V}$ of shape $\varnothing\div \widetilde{\mu_1}$. Since 
$\ins_r(\operatorname{row}(M_1))$ and $\ins_r(\operatorname{col}(M_1))$ are the same shape
(being RSK insertion and recording tableaux of $M_1$), $\widetilde{\mu_1}=\lambda_1$. Suppose 
\[U\div V:=\ins_r({\bs w}_2)=(\varnothing \div \widetilde{V} \leftarrow \operatorname{row}(M_2)).\] 
By definition, 
$U\div V=\operatorname{shift}^{-k}(\operatorname{shift}^k(\varnothing\div \widetilde{V})\leftarrow \operatorname{row}(M_2)), k\gg 0$. 
Since none of $\barindex{1},\barindex{2},\ldots,\barindex{j}$ appear in $\overline{\operatorname{col}}(M_1)$, 
the first $k$ columns of $\operatorname{shift}^k(\varnothing\div \widetilde{V})$ each use $1,2,\ldots,j$. 
Since $\operatorname{row}(M_2)$ only uses these
labels, its insertion into $\operatorname{shift}^k(\varnothing\div \widetilde{V})$ does not affect these columns, so applying $\operatorname{shift}^{-k}$, we 
see $V=\widetilde{V}$ and $U=\ins_r(\operatorname{row}(M_2))$. Hence $\mu_1=\lambda_1$ and $\mu_2=\lambda_2$. 
Thus, $m_{\Omega,{\bf \Lambda}}=0$ unless ${\bf \Lambda}$ has the desired form. 

Now,
$m_{\Omega,{\bf \Lambda}}\leq 1$ since if ${\bf M}\in\Std(\Omega)$ has
${\sf profile}({\bf M})=(\lambda_1,\lambda_2\div\lambda_1,\varnothing\div\lambda_2)$ and is 
highest-weight, then $\operatorname{row}(M_2)$ and $\overline{\operatorname{col}}(M_2)$ must be highest-weight (that is, $\operatorname{col}(M_2)$ is lowest-weight), and since $\operatorname{row}(M_2)$ and $\overline{\operatorname{col}}(M_1)$ have disjoint underlying labels, $\operatorname{col}(M_1)$ must be lowest-weight as well. This uniquely determines $M_1$ and $M_2$, by ordinary RSK.  
Conversely, choose $M_i$ of RSK shape $\lambda_i$, for $i=1,2$,
so that $\operatorname{row}(M_i)$ and
$\overline{\operatorname{col}}(M_i)$ are highest-weight.
Then $\operatorname{row}(M_2)$ uses only
$1,\ldots,\ell(\lambda_2)$, while $\operatorname{col}(M_1)$ uses only
the last $\ell(\lambda_1)$ labels. Since
$\ell(\lambda_1)+\ell(\lambda_2)\leq d_2$,
these sets of labels are disjoint. Hence
${\bf M}\in\Std(\Omega)$, and the shift-insertion argument above gives
${\sf profile}({\bf M})
=(\lambda_1,\lambda_2\div\lambda_1,\varnothing\div\lambda_2)$.
By Corollary~\ref{cor:nonzeroness}, $m_{\Omega,{\bf\Lambda}}\geq~1$.

For general $n$, the same argument applies to each middle word
$\overline{\operatorname{col}}(M_i)\operatorname{row}(M_{i+1})$.
For existence, choose each $M_i$ so that its row word is highest-weight
and its column word is lowest-weight. Then
$\operatorname{row}(M_{i+1})$ uses only the first
$\ell(\lambda_{i+1})$ labels, while $\operatorname{col}(M_i)$ uses only
the last $\ell(\lambda_i)$ labels. Since
$\ell(\lambda_i)+\ell(\lambda_{i+1})\le d_{i+1}$,
these sets of labels are disjoint for every internal vertex.
\end{proof}

\begin{remark}[Spherical $\rep_Q({\bf d})$]
A.~Knutson (private communication) raised the question of providing an RSK bijection where $\rep_Q({\bf d})$ is spherical with respect
to $GL(\d)$; see the recent classification \cite{MullerReineke}. 
The space $\rep_Q(\mathbf d)$ is spherical if and only if
$\mathbb C[\rep_Q(\mathbf d)]$ is multiplicity-free as a $GL(\mathbf d)$-representation. Using Theorem~\ref{thm:newrule}, we can see that this happens if and only if $\d$ avoids:
\begin{itemize}
\item contiguous subsequences $(a,b,c)$ where $a\geq 2, b\geq 3, c\geq 2$;
\item contiguous subsequences $(a,b,c,d)$ where $a\geq 1, b\geq 2, c\geq 2, d\geq 1$.
\end{itemize}
$\operatorname{quiverRSK}$ provides the desired bijection
between $\Std(\rep_Q(\d))$ and sequences of rational tableaux.
\end{remark}

%

\subsection*{Acknowledgements}
This work is related to earlier joint work of the third author with Abigail Price and Ada Stelzer on bicrystalline ideals \cite{AAA,AAA2}, and we have benefited from many discussions with them. 
We thank Zoe Lerdworatawee for her work in ICLUE, which computer-checked some of our ideas. We also thank Tim Chow, Steven Sam, and Edward Tong for helpful discussions. 
We used generative-AI tools ({\tt ChatGPT Plus} and {\tt Claude}) in the final stage to help 
check our arguments, proofread, and typeset.  The authors take full responsibility for all content.
This work was partially supported by NSF RTG in Combinatorics DMS 1937241 and a Simons Collaboration Grant to AY.

\bibliographystyle{siam}
\bibliography{bibliography.bib}

\end{document}